\newcommand{\im}{\mathrm{i} }
\newcommand{\bo}{\mathbf{1}}
\DeclareMathOperator{\Ai}{Ai}
\newcommand{\bv}{\mathbf{v}}
\DeclareMathOperator{\TW}{TW}
\newcommand{\dint}{\int}
\newcommand{\func}{\operatorname}
\newcommand{\gamhat}{\hat{\gamma}}
\newcommand{\psc}{p_{\rm SC}}
\DeclareMathOperator{\sgn}{sgn}
\newcommand{\mR}{\mathbb{R}}
\newcommand{\mP}{\mathbb{P}}
\newcommand{\tr}{\operatorname{tr}}
\newcommand{\bxi}{\bm{\xi}}
\newcommand{\fix}[1]{\textcolor{black}{#1}}
\def\mt{\tilde{m}}
\def\deltat{\tilde{\delta}}
\def\xit{\tilde{\xi}}
\numberwithin{equation}{section}
\title[Spin glass to paramagnetic transition]{Spin glass to
  paramagnetic transition \fix{and triple point in spherical SK model}}
\author[I.M. Johnstone]{Iain M. Johnstone}
\address{Department of Statistics, Stanford University}
\email{imj@stanford.edu}
\author[Y. Klochkov]{Yegor Klochkov}
\address{Faculty of Economics, University of Cambridge}
\curraddr{ByteDance}
\email{eklochov@gmail.com}
\author[A. Onatski]{Alexei Onatski}
\address{Faculty of Economics, University of Cambridge}
\email{ao319@cam.ac.uk}
\author[D. Pavlyshyn]{Damian Pavlyshyn}
\address{Department of Statistics, Stanford University}
\curraddr{Burnet Institute}
\email{damian.pavlyshyn@burnet.edu.au}
\date{\today}
\begin{document}

\maketitle

\tikzset
  {decoration=
     {markings,
=at position 0.5 with {\arrow{stealth}}
     },
   plain/.style={line width=0.8pt},
   arrow/.style={plain,postaction=decorate}
  }


\begin{abstract}
This paper studies spin glass to paramagnetic transition in the
Spherical Sherrington-Kirkpatrick model with ferromagnetic Curie-Weiss
interaction with coupling constant
\textcolor{black}{$J$} 
and inverse
temperature $\beta$. The disorder of the system is represented by a
general Wigner matrix. We confirm a conjecture of \fix{Baik and Lee},
\cite{Baik2016} and
\cite{Baik2017}, that the critical window of temperatures for this
transition is \(\beta = 1 + bN^{-1/3} \sqrt{\log N}\) with
$b\in\mathbb{R}$.
\fix{The limiting distribution of the scaled free energy is Gaussian
  for negative $b$ and a weighted linear combination of independent
  Gaussian and Tracy-Widom components for positive $b$.
  In the special case where the Wigner matrix is from the Gaussian
  Orthogonal or Unitary Ensemble, we describe the triple point
  transition between spin glass, paramagnetic, and ferromagnetic
  regimes in a critical window for $(\beta, J)$ \textcolor{black}{around the triple point $(1,1)$}: the Tracy-Widom
  component is replaced by the one parameter family of deformations
  described by Bloemendal and Virag, \cite{BloVirI}.
}
\end{abstract}

\section{Introduction}

\subsection{Set-up}

We study the large-\(N\) behavior of \textcolor{black}{partition functions, represented by} the spherical integrals
\begin{align}
  \label{eq:integral}
  \textcolor{black}{\mathcal{I}_{\alpha, J, N}}
  = \int_{\mathcal{S}^{N-1}_{\alpha}} \exp \bigl\{ \frac{N\beta}{\alpha} \cdot u^\star \textcolor{black}{W_{J,N}} u \bigr\} (\diff u),
\end{align}
with
\begin{align}
  \label{eq:spiked-model}
  \textcolor{black}{W_{J,N}}  = J \cdot w w^\star + W_N,
\end{align}
\textcolor{black}{where $w$ is an arbitrary $N$-dimensional unit-length vector and $W_N$ is an $N\times N$ random Wigner matrix. They are complex-valued if $\alpha=1$ and real-valued if $\alpha=2$.} In \eqref{eq:integral}, $\mathcal{S}_\alpha^{N-1}$ denotes \textcolor{black}{the corresponding} unit sphere, $(\diff u)$ denotes the normalized uniform measure over $\mathcal{S}_\alpha^{N-1}$, and symbol $^\star$ denotes combined transposition and complex conjugation.
We investigate the limiting distributions of the quantities
\begin{align}
  F_{\alpha, N}
  = \frac{\alpha}{2N} \log \textcolor{black}{\mathcal{I}_{\alpha,J, N}}
  \label{eq:free energy}
\end{align}
for \(\beta\) in the  ``critical regime'' of \(\beta = 1 + O(N^{-1/3}\sqrt{\log N})\), \textcolor{black}{and either a constant $J\in[0,1)$ or $J=1+O(N^{-1/3})$.} 

Our original motivation stems from the fact that integrals
\eqref{eq:integral} appear in the likelihood ratio in statistical
tests of
spiked models in multivariate statistics.
In such models, $J$ and
$\beta$ play the roles of the size of the spike under the null and
under alternative hypotheses, respectively. We discuss this
\fix{briefly} 
at the end of this introduction.

\fix{Expression} \eqref{eq:free energy} has an important physical interpretation. For $\alpha=2$ \textcolor{black}{and all entries of $w$ equal $N^{-1/2}$}, it is the free energy in the \fix{s}pherical Sherrington-Kirkpatrick (SSK) model with inverse temperature $\beta$ and ferromagnetic Curie-Weiss interaction with coupling constant \(J\). \textcolor{black}{The model is characterized by Hamiltonian
\begin{align} \label{eq:hamiltonian}
  H_N(\sigma)
  = \frac{1}{2}\Bigl(
   \sum_{i,j=1}^N W_{ij} \sigma_i \sigma_j + \frac{J}{N} \sum_{i,j=1}^N \sigma_i \sigma_j\Bigr)
  ,
\end{align}
where \(\sigma \in \sqrt{N}\mathcal{S}^{N-1}_2\) corresponds to a scaled version of $u$ in \eqref{eq:integral},  and \(W\) is a real symmetric \(N
\times N\) matrix with zeroes on the diagonal and independent upper
triangular entries \(W_{ij}\) with mean zero and variance~$1/N$.} It was
introduced \fix{by Kosterlitz et al.} in \cite{Kosterlitz1976} as a
tractable variant
of the original Sherrington-Kirkpatrick model that has discrete spins $\sigma \in \{\pm 1\}^{N}$.

\cite{Kosterlitz1976} show that \(F_{2,N}\) exhibits three distinct asymptotic regimes illustrated in \cref{fig:pase-diagram}. These regimes are defined by the value of $\max\{1,\beta^{-1},J\}$: 1 for the spin glass, $\beta^{-1}$ for the paramagnetic, and $J$ for the ferromagnetic.   
They correspond to distinct behavior of complex magnetic media, and
were extensively studied, 
see the recent works of \fix{Baik, Lee and Wu
\cite{Baik2016,Baik2017,BaikLeeWu}
(which are especially relevant to this work) and the references therein.}

\begin{figure}
    \centering
    \begin{tikzpicture}[scale=1.3]
    \draw[thick, ->] (0,0) -- (4,0) node[anchor=north] {$J$};
    
    \draw	(0,0) node[anchor=north] {0}
    		(2,0) node[anchor=north] {1};
    
    \draw[thick, ->] (0,0) -- (0,4) node[anchor=east] {$\frac{1}{\beta}$};
    
    \draw   (0,2) node[anchor=east] {1};
    \draw[-] (0,2) -- (2,2);
    \draw[-] (2, 0) -- (2,2);
    \draw[-] (2, 2) -- (3.8,3.8);
    
    \draw   (0.2,1) node [anchor=west] {spin glass}
            (2.2, 1) node [anchor=west] {ferromagnetic}
            (0.2, 3) node [anchor=west] {paramagnetic};
    
    \draw[thick, red, <->] (1, 1.63) -- (1, 2.37);
    \draw[thick, blue, ->] (2,2) -- (1.72, 1.72);
    \draw[thick, blue, ->] (2,2) -- (1.85, 2.37);
    \draw[thick, blue, ->] (2,2) -- (2.37, 1.85);
\end{tikzpicture}
    \caption{Phase diagram showing the Spin glass, Paramagnetic, and Ferromagnetic regimes. Red arrows indicate the transition between Spin glass and Paramagnetic regimes, which is the focus of this paper. \textcolor{black}{Blue arrows represent the triple point transition} \textcolor{black}{between all the regimes, which we study in G(O/U)E cases.}}
    \label{fig:pase-diagram}
\end{figure}

  \textit{Transitions} between
the spin glass and ferromagetic regimes,
and between 
para- and ferro-magnetic regimes
have been established, see  \cite{Baik2017} and \cite{BaikLeeWu}
respectively. 
However
the transition between the spin glass and
paramagnetic regimes 
has not been fully described, \textcolor{black}{whereas the triple point transition has not been described at all, to our knowledge.}
As \textcolor{black}{these transitions are} also of statistical interest, their
description is the goal of this paper.

In the rest of this introduction, we  first provide brief
background on the three regimes in the SSK model. Then, we 
describe the main result of this paper. Finally, we return to our
original statistical motivation and discuss connections to
this paper. 

\subsection{\textcolor{black}{The three regimes}}
\fix{Baik, Lee and Wu},
\cite{Baik2016,Baik2017,Baik2017cor,BaikLeeWu}, make a thorough study
of the fluctuations of the free energy \(F_{2,N}\) in the spin glass,
para- and ferro-magnetic regimes.\footnote{In these papers, the  
parameterization is slightly different from ours, so that
in their case, the critical threshold is \(\beta = 1/2\) 
instead of \(\beta = 1\).}
The fluctuations of the free energy in the three regimes are shown to be
\begin{enumerate}
\item
  (Spin glass) If \(\beta > 1\) and \(J < 1\), then
  \begin{align*}
    \frac{2N^{2/3}}{\beta - 1}(F_{2,N} - F(\beta))
    \drightarrow \TW_1.
  \end{align*}
  
\item
  (Paramagnetic) If \(\beta < 1\) and \(\beta < 1/J\), then
  \begin{align*}
    N(F_{2,N} - F(\beta))
    \drightarrow \Normal(f_1, a_1),
  \end{align*}
  where \(a_1\) depends on \(\beta\) but not on \(J\), while \(f_1\)
  depends on both \(\beta\) and \( J\). \fix{Specifically for GOE,
  \begin{equation} \label{eq:a1f1}
    f_1 = \tfrac{1}{4}\log(1-\beta^2) - \tfrac{1}{2} \log(1-\beta J),
    \qquad
    a_1 = - \tfrac{1}{2} \log(1- \beta^2).
  \end{equation}}

\item
  (Ferromagnetic) If \(J > 1\) and \(\beta > 1/J\), then
  \begin{align*}
    \frac{N^{1/2}}{\textcolor{black}{\beta-1/J}}(F_{2,N} - F(\beta))
    \drightarrow \Normal(\textcolor{black}{f_2}, a_2),
  \end{align*}
  where \textcolor{black}{$f_2$ and } \(a_2\) depend
  \textcolor{black}{only on $J$.}\footnote{\fix{$f_1, a_1$, and $f_2$ also
    depend on subsets for the first four moments of the atom
    distributions of $W_N$}}
\end{enumerate}

\ The leading order term $F(\beta)$ differs across the regimes:
\begin{equation}
\label{eq:leading term}
    F(\beta)=\left\{
    \begin{aligned}
    &\beta-\frac{1}{2}\log\beta-\frac{3}{4}& & \text{for spin glass}\\
    &\frac{1}{4}\beta^2 & &\text{for paramagnetic}\\
    &\frac{\beta}{2}(J+1/J)-\frac{1}{2}\log(\beta J)-\frac{1}{4}J^{-2}-\frac{1}{2} & &\text{for ferromagnetic.}
    \end{aligned}
    \right.
\end{equation}

These results characterize the fluctuations of \(F_{2,N}\) in models lying strictly within the \textcolor{black}{three} regimes.
The results for the transitions studied in 
\cite{Baik2017} and \cite{BaikLeeWu} \textcolor{black}{can be summarized as follows.
\begin{enumerate}
\item[(1 $\leftrightarrow$ 3)] (Spin glass $\leftrightarrow$ Ferromagnetic) For $\beta>1$ and $J=1-\omega N^{-1/3}$ with $\omega\in\mathbb{R}$
\[
N^{2/3}(F_{2,N}-F(\beta))\drightarrow\frac{\beta-1}{2}\textcolor{black}{BV_1(\omega)},
\]
where \textcolor{black}{$F(\beta)$ is as defined in \eqref{eq:leading
    term} for the spin glass regime, and $BV_1(\omega)$} denotes a
one-parameter family of distributions described in theorems 1.5 and
1.7 of \cite{BloVirI}, \fix{see Sec. \ref{sec:convergence-at-edge}.}
\item[(3 $\leftrightarrow$ 2)] (Ferromagnetic $\leftrightarrow$ Paramagnetic) For $J>1$ and $\beta=1/J+BN^{-1/2}$ with $B\in\mathbb{R}$
\[
N\left(F_{2,N}-F(\beta)\textcolor{black}{-\frac{\log N}{4 N}}\right)\drightarrow G_1+Q_B(G_2),
\]
where \textcolor{black}{$F(\beta)$ is as defined in \eqref{eq:leading term} for the ferromagnetic regime, } \((G_1, G_2)\) has a bivariate Gaussian distribution that depends on $J$ but not on $B$, and \(Q_B\) is a non-linear function that depends both on $J$ and \(B\).
\end{enumerate}}

Concerning the remaining
transition between the spin glass and the paramagnetic regimes,
\cite{Baik2016} and \cite{Baik2017} conjecture that the critical window of temperatures for this transition is \(\beta = 1 + O(N^{-1/3} \sqrt{\log N})\) for any $J<1$. They arrive at this conjecture by matching the orders of the variance of \(F_{2,N}\) as \(\beta \rightarrow 1\) from above and below. In this paper we confirm that the conjecture is correct, and describe the asymptotic behavior of \(F_{\alpha,N}\) in the critical temperature window. 

\subsection{\fix{Results}}

\textcolor{black}{Our first main result shows that if}
\begin{align*}
  \beta = 1 + b N^{-1/3} \sqrt{\log N},
  \qquad
  0 \leq J < 1,\quad \textcolor{black}{b\in\mathbb{R}},
\end{align*}
then \(F_{\alpha, N}\) has fluctuations of order 
\(\sqrt{\log N}/N\).
Moreover, as \(b\) increases from \(-\infty\) to \(\infty\), we describe the transition of the limiting distribution of \(F_{\alpha, N}\) from Gaussian to the Tracy-Widom.

\begin{theorem} \label{thm:main}
  Consider $F_{\alpha,N}$ with $\alpha=1$ or $\alpha=2$ as defined in
  \eqref{eq:integral} -- \eqref{eq:free energy}. \textcolor{black}{Let $W_N$ from \eqref{eq:spiked-model} be a Wigner matrix whose off-diagonal moments match scaled GOE ($\alpha=2$) or GUE ($\alpha=1$) up to third order.}
  Further, let \(\beta = 1 + b N^{-1/3} \sqrt{\log N}\) with a constant \(b \in \R\) and let \(0 \leq J < 1\).
  Finally let \(b_+ = \max\{0, b\}\)
  be the positive part of $b$.
  Then  
  \begin{align}
  \label{eq:main thm}
    \frac{N}{\sqrt{\frac{\alpha}{12} \log N}}\left(F_{\alpha,N} - F(\beta)+\frac{\log N}{12 N}\right)
    \drightarrow \Normal(0, 1) + \sqrt{\frac{3}{\alpha}}b_+ \mathrm{TW}_{2/\alpha},
  \end{align}
  where \(\mathrm{TW}_2\) and \(\mathrm{TW}_1\) are the complex and real Tracy-Widom distributions, respectively, independent from the $\mathcal{N}(0,1)$, and where $F(\beta)$ is as in \eqref{eq:leading term}, that is
  \begin{align*}
    F(\beta)
    = \left\{
    \begin{aligned}
    &\beta-\frac{1}{2}\log\beta-\frac{3}{4} & &\text{for } b\geq 0 \\
    &\frac{1}{4}\beta^2 & &\text{for } b\leq 0.
    \end{aligned}
    \right.
  \end{align*}
\end{theorem}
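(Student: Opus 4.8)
The plan is to begin from the classical Laplace-transform representation of the spherical integral. Writing $W_{J,N}=\sum_{i=1}^N\lambda_i\psi_i\psi_i^\star$ with $\lambda_1\ge\cdots\ge\lambda_N$ and integrating out the uniform measure on the sphere gives, for an explicit constant $C_{\alpha,N}$ (a ratio of Gamma functions times powers of $\pi,2,\beta,N$, coming from the sphere volume and the $\delta$-function encoding $|u|=1$),
\begin{align*}
  \mathcal{I}_{\alpha,J,N}
  = C_{\alpha,N}\int_{\Gamma}\exp\Bigl\{\tfrac N\alpha\,g_N(z)\Bigr\}\,\frac{\diff z}{2\pi\im},
  \qquad
  g_N(z)=\beta z-\frac1N\sum_{i=1}^N\log(z-\lambda_i),
\end{align*}
where $\Gamma$ is a vertical contour to the right of $\lambda_1$; hence $F_{\alpha,N}=\tfrac\alpha{2N}\log C_{\alpha,N}+\tfrac\alpha{2N}\log\int_\Gamma e^{(N/\alpha)g_N(z)}\tfrac{\diff z}{2\pi\im}$. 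Since $z\mapsto\frac1N\sum_i(z-\lambda_i)^{-1}$ decreases from $+\infty$ to $0$ on $(\lambda_1,\infty)$, the saddle equation $g_N'(z)=\beta-\frac1N\sum_i(z-\lambda_i)^{-1}=0$ has a unique root $\hat z=\hat z_N>\lambda_1$, and the whole analysis is a careful saddle-point evaluation around $\hat z$. I would first dispose of the spike: by the matrix determinant lemma $\tfrac1N\log\det(z-W_{J,N})=\tfrac1N\log\det(z-W_N)+\tfrac1N\log(1-J\,w^\star(z-W_N)^{-1}w)$, and since $J<1$ while $w^\star(\hat z-W_N)^{-1}w$ stays bounded (isotropic local law and eigenvector delocalisation, for $\hat z$ near the edge $2$), the extra term is $O(1/N)$ --- negligible at scale $\sqrt{\log N}/N$. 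This explains why the limit does not see $J$ and lets me work with $W_N$, for which rigidity, edge universality and CLTs for linear eigenvalue statistics are available under the stated three-moment matching.

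The core is a dichotomy in the location of $\hat z$. If $b\le 0$ then $\beta<1$ and $\hat z$ sits near $\hat z_{\mathrm{det}}=\beta+1/\beta=2+\kappa$ with $\kappa\asymp(1-\beta)^2=b^2N^{-2/3}\log N\gg N^{-2/3}$; since this far exceeds the edge fluctuation scale, the contour stays well to the right of $\lambda_1$ and an honest Laplace expansion applies (modulo controlling $g_N'',g_N'''$, which blow up like powers of $\kappa^{-1}$). If $b>0$ then $\beta>1$, the equation $m_{\mathrm{sc}}(z)=\beta$ has no real root, and $\hat z$ is pinned just above $\lambda_1$ at distance $\hat z-\lambda_1\approx(N(\beta-1))^{-1}\asymp N^{-2/3}/(b\sqrt{\log N})$, \emph{smaller} than the typical edge gap, so the saddle collides with the branch point at $\lambda_1$ and the contour must be deformed around it (as in the spin-glass-regime analysis). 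In both cases, using $g_N'(\hat z)=0$ to pass from total to partial derivatives, the fluctuation of $g_N(\hat z)$ splits to leading order into a microscopic edge term $(N(\hat z-\lambda_1))^{-1}(\lambda_1-\mathbb E\lambda_1)$ plus a linear-statistic term in $\lambda_2,\dots,\lambda_N$. The crucial asymmetry is that $(N(\hat z-\lambda_1))^{-1}=\beta-\tfrac1N\sum_{i\ge2}(\hat z-\lambda_i)^{-1}\approx\beta-1\asymp bN^{-1/3}\sqrt{\log N}$ when $b>0$ (the rest of the spectrum, seen from just above the edge, contributes $\approx m_{\mathrm{sc}}(2)=1$), whereas it is only $\approx(N\kappa)^{-1}\asymp N^{-1/3}/(b^2\log N)$ when $b<0$ (the sensitivity to $\lambda_1$ is diluted because $\hat z$ lies amid a sea of nearby eigenvalues); multiplied by $\lambda_1-\mathbb E\lambda_1\asymp N^{-2/3}$ this makes the edge term of order $bN^{-1}\sqrt{\log N}$ for $b>0$ but $o(N^{-1}\sqrt{\log N})$ for $b\le0$ --- exactly the $b_+$ of the statement, with edge universality for the subcritical spiked model identifying the rescaled $\lambda_1$ with $\mathrm{TW}_{2/\alpha}$.

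For the linear-statistic term, the relevant test function is $x\mapsto\log(\hat z-x)$ with $\hat z-2=O(N^{-2/3}\log N)$; its Chebyshev coefficients are $-2r^k/k$ with $r+1/r=\hat z$, so the limiting variance of $\sum_i\log(\hat z-\lambda_i)$ is $-\alpha\log(1-r^2)(1+o(1))=\tfrac\alpha3\log N\,(1+o(1))$ (using $1-r^2\asymp\sqrt{\hat z-2}$, and that GOE/GUE correspond to $\alpha=2,1$), yielding a $\Normal(0,\tfrac{\alpha\log N}{12N^2})$ contribution to $F_{\alpha,N}$; the $O(1)$ corrections to this variance (fourth cumulant of the entries) and any third-cumulant terms are killed by moment matching and are in any case $o(\log N)$. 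Since the top eigenvalue of a Wigner matrix is asymptotically independent of its mesoscopic and macroscopic linear eigenvalue statistics, the edge term and the linear-statistic term converge jointly to independent $\mathrm{TW}_{2/\alpha}$ and Gaussian limits, which after rescaling becomes $\Normal(0,1)+\sqrt{3/\alpha}\,b_+\mathrm{TW}_{2/\alpha}$. Finally, the deterministic centering is obtained by combining: Stirling's expansion of $\tfrac\alpha{2N}\log C_{\alpha,N}$ (it supplies $-\tfrac12-\tfrac12\log\beta$ at $O(1)$ plus an explicit $\log N/N$ term), the Laplace/branch-point correction $-\tfrac\alpha{2N}\cdot\tfrac12\log(Ng_N''(\hat z)/\alpha)$ (another $\log N/N$ term, since $g_N''(\hat z)\asymp N^{1/3}$ up to logs), and the subleading term of $\mathbb E\sum_i\log(\hat z-\lambda_i)-N\!\int\log(\hat z-x)\psc(x)\,\diff x$ (for $\alpha=2$ this contains the edge boundary term $\tfrac14\log(\hat z-2)$, absent when $\alpha=1$); the $O(1)$ pieces collapse to $F(\beta)$ as in \eqref{eq:leading term} according to the sign of $b$, and all the $\log N/N$ pieces collapse to $-\tfrac1{12}$ for both $\alpha$.

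I expect the main obstacle to be the regime $b>0$ and, especially, the transition window around $b=0$: one must carry out the contour deformation when the saddle is within $N^{-2/3}/\sqrt{\log N}$ of the branch point $\lambda_1$, simultaneously control the microscopic edge behaviour (Tracy--Widom, via spiked-model edge universality) and the near-edge linear statistic (Gaussian, whose logarithmically growing variance is built up from mesoscopic scales down to $N^{-1/3}$), track every $\log N/N$-order deterministic correction, and --- the most delicate point --- establish joint convergence with asymptotic independence of the Gaussian and Tracy--Widom components.
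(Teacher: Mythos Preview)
Your architecture matches the paper's: contour-integral representation, a dichotomy on the sign of $b$ with a deterministic saddle $\hat\gamma=2+b^2N^{-2/3}\log N$ for $b<0$ and a contour pinned near $\lambda_1$ for $b\ge 0$, a Gaussian piece from a near-edge log-statistic, a Tracy--Widom piece from $\lambda_1$, and their asymptotic independence. Two points of execution differ. First, for $b>0$ the paper does \emph{not} work with the random saddle $\hat z$ and a first-order perturbation in $\lambda_1$; your linearisation ``$(N(\hat z-\lambda_1))^{-1}(\lambda_1-\E\lambda_1)$'' is heuristic precisely because $\hat z-\lambda_1\asymp N^{-2/3}/(b\sqrt{\log N})$ is \emph{smaller} than the fluctuation scale of $\lambda_1$, so $\log(\hat z-\lambda_1)$ cannot be Taylor-expanded in $\lambda_1$. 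Instead the paper deforms to a keyhole contour around $\lambda_1$ and evaluates the loop exactly (Cauchy's formula for $\alpha=1$, a one-dimensional integral for $\alpha=2$), reducing the integral to $\exp\{(N/\alpha)\hat G(\lambda_1)\}$ with $\hat G(\lambda_1)=\beta\lambda_1-\frac1N\sum_{j\ge 2}\log(\lambda_1-\lambda_j)$; the decomposition into edge and linear-statistic pieces then comes from the exact identity $\sum_{j\ge 2}\log(\lambda_1-\lambda_j)=\sum_{j\ge 1}\log|2-\lambda_j|+N(\lambda_1-2)+O_{\Pr}(1)$, which makes your $(\beta-1)(\lambda_1-2)$ term appear cleanly. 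For $b=0$ the keyhole and vertical contributions are of comparable size, and the paper uses the steepest-descent contour for a matching lower bound. Second, your handling of the spike via the matrix-determinant lemma is a legitimate alternative to the paper's route (Cauchy interlacing plus ``stickiness'' $|\lambda_j-\mu_j'|=O(N^{-1+\epsilon})$ for the top eigenvalues); either works once one has the isotropic local law near the edge.

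You correctly flag the two hardest technical inputs. Be aware that neither is ``off the shelf'': the CLT for $\sum_j\log|\gamma-\lambda_j|$ at $\gamma-2\asymp N^{-2/3}$ or $N^{-2/3}\log N$ has an $N$-dependent, edge-singular test function and is the content of a companion paper (your Chebyshev computation gives the right variance heuristic but not a proof); and the asymptotic independence of $\lambda_1$ from $\sum_j\log|2-\lambda_j|$ is established in the paper itself, first for GUE/GOE via the tridiagonal representation (showing $\lambda_1$ depends asymptotically only on a corner of size $\sim N^{1/3}\log^3N$ while the log-determinant depends only on its complement), then carried to matching-moment Wigner matrices by Lindeberg swapping. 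One small correction: for $b<0$ the saddle $\hat z$ is not ``amid a sea of nearby eigenvalues'' --- it is \emph{far above} the edge, at distance $\asymp N^{-2/3}\log N\gg N^{-2/3}$, which is exactly why the sensitivity to $\lambda_1$ is diluted.
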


For definitions of Wigner matrices, scaled G(O/U)E, and the matching moment condition see \cref{sec: definitions}. \textcolor{black}{To avoid confusion, note that we use the $\alpha$-parameterization for G(O/U)E: $\alpha=2$ for GOE and $\alpha=1$ for GUE. It comes from the literature linking these ensembles to Jack polynomials (e.g. \cite{dumitriu07}). We do not use the more familiar parameterization $2/\alpha=\beta$ because $\beta$ has been already used as the inverse temperature parameter.}

As can be seen from the theorem, the fluctuations of $F_{\alpha,N}$
remain Gaussian for negative $b$. However, as $b$ changes sign
and enters the spin glass domain from the paramagnetic
domain, the fluctuations acquire an independent Tracy-Widom component,
which  eventually dominates the Gaussian as $b$ diverges to $+\infty$.

\fix{The shift $-\log N/12N$ in $F_{\alpha,N}$ and the scale
  $\sqrt{\frac{1}{6} \log N}$ (for $\alpha = 2$) can be `predicted'
   by formal substitution of $(\beta, J)$
  in the paramagnetic regime formulas (\ref{eq:a1f1}).
}

To prove the independence of TW$_{2/\alpha}$ and $\mathcal{N}(0,1)$ components of the limit in \eqref{eq:main thm} we establish the asymptotic independence of the largest eigenvalue of $W_N$ and the log determinant $\log|\det(W_N-2)|$. When obtaining such a result initially for G(O/U)E, we use techniques similar to those developed in \cite{johnstone2020logarithmic}.
Specifically, we first 
consider the tri-diagonal form of $N\times N$ G(O/U)E and prove that, asymptotically, the largest eigenvalue depends only on the lower-right corner of dimension $N^{1/3}\log^{3}N$, while the log determinant asymptotically depends only on the complementary upper-left corner. 

The latter result has some independent interest, as it
gives theoretical grounding to the computational technique (described,
for example, in \cite{edelman2013}), wherein the largest eigenvalue of
an $N$-dimensional tri-diagonal matrix with huge $N$ is computed from
its order \(10N^{1/3}\) minor.

\textcolor{black}{ Our second main result describes the triple point transition in the critical window
\[
\beta = 1 + b N^{-1/3} \sqrt{\log N},\qquad J=1-\omega N^{-1/3},\quad (b,\omega)\in\mathbb{R}^2
\]
for special cases where $W_N$ belongs to G(O/U)E. In this setting, $F_{\alpha,N}$ still has fluctuations of order $\sqrt{\log N}/N$. However, the limiting distribution is a convolution of Gaussian with distribution $\mathrm{BV}_{2/\alpha}(\omega)$, described in theorems 1.5 and 1.7 of \cite{BloVirI}.} 

\textcolor{black}{Precisely, we establish the following result.}

\textcolor{black}{\begin{theorem}
\label{thm: triple}
    In the setting of \ref{thm:main}, let $J=1-\omega N^{-1/3}$ with a constant $\omega\in\mathbb{R}$ instead of $J\in[0,1)$. Furthermore, let $W_N$ be from GOE ($\alpha=2$) or GUE ($\alpha=1$). Then
     \begin{equation}
    \label{eq:main thm'}
    \frac{N}{\sqrt{\frac{\alpha}{12} \log N}}\left(F_{\alpha,N} -
          F(\beta)       \textcolor{black}{-\frac{\log N}{12 N}} \right) 
        \drightarrow \Normal(0, 1) + \sqrt{\frac{3}{\alpha}}
        \textcolor{black}{b_+ \mathrm{BV}_{2/\alpha}(\omega).} 
  \end{equation}
\end{theorem}}

\textcolor{black}{A few remarks are in order. First, the \fix{change in} sign 
  of $\log N/(12 N)$ on the left hand side of \eqref{eq:main
    thm'} (cf. \eqref{eq:main thm})} \fix{reflects an extra shift $\log
    N/6N$ in the free energy during the additional transition from $J<1$
  to $J>1$. This shift is again `predicted' by formal
    substitution of the critical values of $(\beta, J)$ in the term
  $-\frac{1}{2}\log(1-\beta J)$ of (\ref{eq:a1f1}).}

\textcolor{black}{Second, as $\omega\rightarrow \infty$, \cite{BloVirI} show that $\mathrm{BV}_{2/\alpha}(\omega)\drightarrow\mathrm{TW}_{2/\alpha}$, consistent with Theorem \ref{thm:main}
in the spin glass region. As $\omega\rightarrow -\infty$, \cite[Th.~4.1.1]{Bloemendal11} shows that 
\[
\frac{\mathrm{BV}_{2/\alpha}(\omega)+\omega^2}{\sqrt{|\omega|}}\drightarrow\mathcal{N}(0,2\alpha),
\]
consistent with the Gaussian limit for $F_{\alpha,N}$ in the ferromagnetic region.}

\textcolor{black}{Third, in the interior of the para- and ferro-magnetic regions, the scaled limiting law of $F_{\alpha,N}$ is Gaussian, and the limit in Theorem \ref{thm: triple} is consistent with this for $b\rightarrow -\infty$ or $\omega \rightarrow -\infty$. Near the critical line $\beta = 1/J < 1$, if we formally set $J=1-\omega N^{-1/3}$ in the non-Gaussian limit
$G_1 + Q_B(G_2)$ of \cite{BaikLeeWu}, then
the Gaussian term $G_1$ dominates.
Indeed, by Theorem 1.2 of \cite{BaikLeeWu}, $G_1= -\frac{1}{12} \log N +
\sqrt{\frac{1}{6}\log N} Z_1+O_\Pr(1)$  for standard normal $Z_1$, while the
stochastic part of $Q_B(G_2)$ is of smaller order, namely $O_\Pr(1)$.} \textcolor{black}{Here and in what follows, the notation $x=O_\Pr(1)$ means ``bounded in probability'', that is, for any small $\epsilon>0$ there exists $C>0$ such that $\Pr\{|x|>C\}\leq \epsilon$ for all sufficiently large $N$.}


When the initial version of this paper \cite{jkop21}
was close to completion, we learned about the related
study \cite{Landon2020},
later published \cite{Lan22}.
That paper \textcolor{black}{considers $W_N$ from GOE}, and establishes the Gaussian
fluctuation limit for $F_{2,N}$ in the case $b\leq 0$ as in
\cref{thm:main}. \textcolor{black}{It obtains the same Gaussian limit for $b\rightarrow 0$ from above, and shows that the limit becomes Tracy-Widom for $b\rightarrow\infty$ at any rate. For fixed} $b>0$, it establishes only the tightness
of the left hand side of \eqref{eq:main thm}, and conjectures that the
limiting distribution exists and equals a sum of independent normal
and Tracy-Widom distributions. Our result confirms that conjecture \textcolor{black}{not only for GOE, but for general symmetric or Hermitian Wigner matrices $W_N$ whose moments match GOE/GUE up to third order}.

\textcolor{black}{
  In determining the limiting fluctuations,
  \cite{Lan22} uses
recent results of \cite{lambert2020b}, who deal exclusively with Gaussian beta ensembles, in the case of interest here it is GOE and GUE without a spike (i.e. $J = 0$). Instead, we rely on 
central limit theorems for logarithmic spectral
statistics established in
another paper of ours \cite{johnstone2020logarithmic},
and recalled here in theorems \ref{CLT1} and \ref{CLT2}. These results hold for general Wigner matrices with a spike in sub-critical region $J \in [0, 1)$. In addition, our results  allow arbitrary variance profile on the diagonal. This is motivated by the fact that the formulation of the SSK model often considers zero diagonal interactions, see e.g. 
\cite{Kosterlitz1976},\cite{Talagrand2006}.}

\textcolor{black}{Finally, \cite{Lan22} considers the
  case $J=0$, \fix{and so}  does not address the triple
  point transition, theorem \ref{thm:
    triple}. Another recent paper that considers only the case $J=0$,
  but in the  bipartite SSK model framework, is \cite{collins23}.}

\subsection{Statistics background}
Random matrices of the form \cref{eq:spiked-model}, \textcolor{black}{and in particular}
matrices that differ 
from a white Gaussian or Wishart random matrix 
by a low-rank deviation have been extensively studied in the statistics literature, where their distributions are known as spiked ensembles.
In this context, the parameter \(J\) is known as the spike.

Our interest in the free energy \(F_{\alpha, N}\) stems from its appearance in problems of statistical testing for spiked random matrix models.
As discussed in \cite{Onatski2014a} and cataloged for a much larger family of spiked models in \cite{Johnstone2015}, the joint density of the eigenvalues $\Lambda$ of both spiked Gaussian and spiked Wishart ensembles with a spike of size \(h\) are of the form
\begin{align*}
  p_N(\Lambda; h)
  &= c(\Lambda) d(h) \int_{\mathcal{S}^{N-1}_{\alpha}} \exp \Bigl\{ \frac{N}{\alpha} h \cdot u^\star \Lambda u \Bigr\} (\diff u)
\end{align*}
for some functions \(c\) and \(d\).
This demonstrates the close relationship between \(F_{\alpha, N}\) and the log-likelihood ratio for testing simple hypotheses about \(h\), as well as the close relationship between spiked Gaussian and Wishart models.

Indeed, when \(J = 0\), \(F_{\alpha, N}\) is distributed as the scaled log-likelihood ratio for testing
\begin{align*}
  H_0\colon h = 0
  \qquad
  \text{vs. }
  \qquad
  H_1\colon h = \beta
\end{align*}
in the spiked Gaussian model, under the null hypothesis.
Specifically,
for $\beta \leq 1$ 
\begin{equation*}
  \log \frac{p_N(\Lambda; \beta)}{p_N(\Lambda; 0)}
    = \frac{2N}{\alpha}
    [F_{\alpha,N} - F(\beta)]. 
\end{equation*}
Theorem \ref{thm:main} therefore gives the limiting behavior of the null
distribution of the likelihood ratio. The mean shift and 
variance, both growing of order $\log N$, verify (as is expected from discussion
in \cite{Johnstone2015}, which focuses on sub-critical cases with
$\beta < 1$ fixed) that the
null and alternative distributions fail to be contiguous, and so we
cannot (as there) directly obtain the limiting distribution of the
likelihood ratio under  alternative hypotheses $\beta$ near $1$.

\section{\textcolor{black}{Definitions, preliminary results, and proof strategy}}
\label{sec:some-prel-results}

\subsection{Definitions}
\label{sec: definitions}

\begin{definition}
\label{Wigner definition}
\textcolor{black}{An $N\times N$ Wigner matrix is an Hermitian matrix
$W_N=(\xi_{ij}/\sqrt{N})$ satisfying
\begin{enumerate}
\item[(i)] the upper-triangular components
$\{\operatorname{Re}\xi_{ij},\operatorname{Im}\xi_{ij}\}_{i<j}$ and 
$\{\xi_{ii}\}$ are independent
random variables with mean zero,
\item[(ii)] $\mathbf{E}|\xi_{ij}|^2=1$ for $i\neq j$ and
$\mathbf{E}\xi_{ii}^2\leq B$ for some absolute constant $B$;
\item[(iii)]a moment bound uniform in $N$: for all \(p \in \Z_{>0}\), there
is a constant \(C_p\) such that 
\begin{equation*} 
  \E \abs{\Re\xi_{ij}}^p,
  \E \abs{\Im\xi_{ij}}^p 
  \leq C_p.
\end{equation*}
\end{enumerate}
This definition is standard, e.g.
\cite[][Def 2.2]{BenaychG2018}, except that we also require
independence of $\Re\xi_{ij}$ and $\Im \xi_{ij}$ to simplify our
arguments.
Condition (ii) allows for zero variances on the diagonal, as in the
SSK model of \cite{Kosterlitz1976}. }
\end{definition}

\textcolor{black}{In what follows, we will consider Hermitian complex-valued $W_N$ when $\alpha=1$ and symmetric real-valued $W_N$ when $\alpha=2$.
An important example of a Hermitian/symmetric Wigner matrix is a matrix from scaled G(O/U)E. For the reader's convenience, we recall here the definitions of these classical ensembles.} 
\begin{definition}[GUE and GOE]
For \(1 \leq i \leq j \leq N\), let \(\xi_{ij}\), \(\eta_{ij}\) be independent \(\Normal(0, 1)\) random variables.
Then define a Hermitian matrix \(Z_1\) with entries
  \begin{align*}
    Z_{1,ij}
    &=
      \begin{piecewise}
        \xi_{ij} &\text{if } i = j, \\
        \frac{1}{\sqrt{2}} (\xi_{ij} + \im \eta_{ij}) &\text{if } i < j, \\
        \overline{Z_{1, ji}} &\text{if } i > j.
      \end{piecewise}
  \end{align*}
  Similarly, define a symmetric matrix \(Z_2\) by
  \begin{align*}
    Z_{2, ij}
    &=
      \begin{piecewise}
        \sqrt{2} \xi_{ii} &\text{if } i = j, \\
        \xi_{ij} &\text{if } i < j \\
        Z_{2, ji} &\text{if } i > j.
      \end{piecewise}
  \end{align*}
  We call the distribution of \(Z_1\) the Gaussian Unitary Ensemble (GUE), and that of \(Z_2\) the Gaussian Orthogonal Ensemble (GOE).
\end{definition}

If $Z_\alpha$ is an $N\times N$ G(O/U)E matrix, then we call $Z_\alpha/\sqrt{N}$ a scaled G(O/U)E matrix. \textcolor{black}{After the scaling, matrices from G(O/U)E become special cases of Wigner matrices as defined above.}

\begin{definition}[Spiked Wigner Matrix]
\textcolor{black}{We call matrix $W_{J,N}=Jww^\star+W_N$ (see \eqref{eq:spiked-model}) a $J$-spiked Wigner matrix. We call it sub-critically spiked if $J<1$. Sometimes, we refer to $W_{J,N}$ as a spiked $W_N$ or a spiked version of $W_N$.} 
\end{definition}

\textcolor{black}{\begin{definition}[Moment matching]
The off-diagonal moments of two Wigner matrices $W_N, W_N'$
\textit{match to order $m$} if for  integer $0 < a \leq m$
\begin{equation*}
  \E (\Re \xi_{ij})^a = \E (\Re \xi_{ij}')^a, \qquad
  \E (\Im \xi_{ij})^a = \E (\Im \xi_{ij}')^a
\end{equation*}
for all $1 \leq i < j \leq N$.
\end{definition}}

\noindent\textbf{\textcolor{black}{Some notations.}} The notation $a_N \lesssim b_N$ means that $a_N \leq C b_N$ for some
$C$ and $N$ large. \textcolor{black}{The notation $a_N\asymp b_N$ means that $a_N \lesssim b_N$ and $b_N \lesssim a_N$.} 
We say that $a_N$ is a $\Theta_{\Pr }(1)$ variable if $a_N$ is
a.s.~positive and $a_N, a_N^{-1}$ are $O_{\Pr}(1)$.
We say that events $\mathcal{E}_N$ hold asymptotically almost surely
(a.a.s.) if $\Pr(\mathcal{E}_N) \to 1$ as $N \to
\infty$. \textcolor{black}{We say that $\mathcal{E}_N$ hold with
  overwhelming probability (w.o.p.) if
  $\Pr(\mathcal{E}_N)=1-O(N^{-c})$ for \fix{each} $c>0$. The term ``with high probability'' means $P(\mathcal{E}_N) =
1 - O(N^{-c})$ for some $c > 0$. Notation $\drightarrow$ indicates convergence in distribution. }

\subsection{\textcolor{black}{Preliminary results}}\label{sec:prelim}

Our analysis is based on the now well known 
contour integral representation of \(\mathcal{I}_{\alpha, J, N}\),
\textcolor{black} {reviewed in} \cref{sec:remarks-cont-repr}:
\begin{align}
  \mathcal{I}_{\alpha, J, N}
   = \frac{C_{\alpha, N}}{2 \pi \im} \int_{\mathcal{K}}
  \exp\{(N/\alpha) G(z)\} \diff z,\label{eq for z}
  \qquad
  G(z)
 = \beta z - \frac{1}{N}\sum_{j=1}^N \log(z - \lambda_{j}),
\end{align}
where for now the integration contour $\mathcal{K}$ is the vertical line from
$\gamma - \im \infty$ to $\gamma + \im \infty$ for any constant
$\gamma > \lambda_{1}$,  
$\lambda_{1} \geq \cdots \geq \lambda_{N}$ are the
eigenvalues of \textcolor{black}{$W_{J,N}$}, and 
\begin{equation*}
    C_{\alpha, N}
    = \frac{\Gamma(N/\alpha)}{ (\beta N/\alpha)^{N/\alpha - 1}}.  
\end{equation*}

Notice that the integrand is an analytic function in $\C\setminus (-\infty,\lambda_{1}]$ and that the integral along the circular arc
\[
  C_{R, K}
  = \{z \in \C : \abs{z} = R, \Re(z) \leq K\}
\]
satisfies, for large enough \(R\),
\begin{align*}
    \abs[\bigg]{\int_{C_{R,K}} \exp\{(N/\alpha) G(z)\} \diff z}
    \leq 2 \pi R \cdot \frac{e^{N \beta K/\alpha}}{(R/2)^{N/\alpha}}
    \xrightarrow{R\rightarrow \infty} 0.
\end{align*}
In particular, Cauchy's theorem implies that $\mathcal{K}$ can be deformed
without affecting the value of the integral as long as
$\lambda_{j}$ are never intersected and as long as the
resulting contour has real part bounded above.

\textcolor{black}{Many of our technical arguments involve properties of the logarithmic statistic entering $G(z)$ and its derivatives at various points $z$. In this subsection, we collect important preliminary results that concern such properties. We will make the following assumption\fix{s}.}
\vspace{0.2cm}

\noindent \textbf{Assumption W.} Suppose $W_N$ is a Wigner matrix
whose off-diagonal moments match scaled GUE ($\alpha=1$) or GOE
($\alpha=2$) up to third order. Let $W_{J,N}$ be a sub-critically
spiked version of $W_N$

\medskip
\noindent \fix{\textbf{Assumption G$^\omega$.} [Critically spiked G(O/U)E]
We call matrix $W_{J,N}^{(\omega)}=Jww^\ast+W_N$ a \textit{critically
  spiked} G(O/U)E if $W_N=Z_\alpha/\sqrt{N}$ is scaled G(O/U)E and
$J=1-\omega N^{-1/3}$.}

\vspace{0.2cm}

We will need the following two central limit theorems, \fix{established in
\cite{johnstone2020logarithmic} for Case W and in Section
\ref{sec:proof of CLT} for case G$^\omega$.
Let $\lambda_1\geq\cdots\geq\lambda_N$ be the eigenvalues of $W_{J,N}$
and $W^{(\omega)}_{J,N}$ in both cases.}

\begin{theorem}
\label{CLT1}
Suppose Assumption W holds. Let $\gamma =2+CN^{-2/3}\log N$, where $C>0$ is an arbitrary constant, \textcolor{black}{and let
\[
\tau_N=\sqrt{\frac{\alpha}{3}\log N},\qquad \mu_N=\frac{1}{2}N+CN^{1/3}\log N-\frac{2}{3}(C\log N)^{3/2}-\frac{\alpha-1}{6}\log N.
\]}
Then
\begin{equation} \label{eq:T2.5}
\frac{\sum_{j=1}^{N}\log \left( \gamma -\lambda _{j}\right)-\mu_N}{\tau_N}\drightarrow\mathcal{N}(0,1).
\end{equation}
\fix{Under Assumption G$^\omega$, the result holds with $\mu_N$
  replaced by $\mu_N - \frac{1}{3} \log N$.}
\end{theorem}

\begin{theorem}
\label{CLT2}
\textcolor{black}{Suppose Assumption W holds. Let $\gamma =2+CN^{-2/3}$, where $C\in \mathbb{R}$ is an arbitrary constant,} \textcolor{black}{let $\tau_N$ be as defined in theorem \ref{CLT1}, and let
\[
\mu_N'=\frac{1}{2}N+CN^{1/3}-\frac{\alpha-1}{6}\log N.
\]}
Then
\begin{equation*}
\frac{\sum_{j=1}^N \log \left\vert \gamma -\lambda_{j}\right\vert -\mu_N'}{\tau_N}\drightarrow\Normal(0,1) .
\end{equation*}
\fix{Under Assumption G$^\omega$, the result holds with $\mu_N$
  replaced by $\mu_N - \frac{1}{3} \log N$.}
\end{theorem}


\textcolor{black}{All the remaining preliminary results described in
  \fix{the next two sub-sections} are established \fix{in the Appendix,}
  sections \ref{sec:proof key lemmas} and \ref{sec:wigner}.
  First, they are proved for the special case of $W_N$ being a scaled
  G(O/U)E and \textcolor{black}{$J=0$ (no spike) or $J=1-\omega
    N^{-1/3}$ (critical spike)} in \cref{sec:proof key lemmas},
  \fix{ using known sharp bounds for the one-point \textcolor{black}{correlation} function both in
    the bulk and \textcolor{black}{at}  the edge.}
  Then, \cref{sec:wigner} extends the proof to sub-critically spiked Wigner matrices $W_{J,N}$ satisfying Assumption W using the Lindeberg swapping technique.}

\subsubsection{Convergence at the edge.}
\label{sec:convergence-at-edge}
We will rely on the properties of the top eigenvalues of sub-critically spiked Wigner matrices \textcolor{black}{and critically spiked G(O/U)E}. For the special cases of scaled G(O/U)E and $J=0$ \textcolor{black}{(no spike)}, the celebrated papers \cite{trwi94,trwi96,dieng05}
showed that, for each fixed $j$, the
scaled eigenvalues $N^{2/3}(\lambda_j - 2)$ converge in law to the
$j$-th Tracy-Widom distribution,
$\TW_{2/\alpha,j}$. \textcolor{black}{For critically spiked G(O/U)E,
  \cite[thm.~1.5 and prop.~2.8]{BloVirI} show
  \fix{finite dimensional convergence of the scaled eigenvalues to the
    (negative of the) eigenvalues of the stochastic Airy operator
    \textcolor{black}{$\mathcal{H}_{2/\alpha,\omega}$} with
    Robin boundary condition,}
  whose spectrum is simple with probability one.}
\textcolor{black}{The definition of $\mathcal{H}_{2/\alpha,\omega}$
  is recalled in Section \ref{sec: proof three poins}.}

We need some further consequences of these convergences, along with the extension of these consequences to sub-critically spiked Wigner matrices.
The particular results are summarized in the following lemma.
\begin{lemma}
\label{lemma three points}
Under Assumption\fix{s W or G$^\omega$,} we have
\begin{enumerate}[label=(\roman*)]
    \item For any \(k \in \Z_{>0}\), let \((\TW_{2/\alpha,j})_{1\leq j
        \leq k}\) be the joint limiting distribution of the \(k\)
      largest eigenvalues for a scaled GUE (\(\alpha = 1\)) or GOE
      (\(\alpha = 2\)), and let $\Lambda_0<\dotsc<\Lambda_{k-1}$ be
      the $k$ smallest eigenvalues of the stochastic
      \textcolor{black}{Airy} operator $\mathcal{H}_{2/\alpha,\omega}$
      \textcolor{black}{acting on functions satisfying Robin boundary
        condition $f'(0)=\omega f(0)$.} Then
\fix{ \begin{equation*}
    \bigl(N^{\frac{2}{3}}(\lambda_1 - 2), \dotsc,
    N^{\frac{2}{3}}(\lambda_k - 2)\bigr) \drightarrow
    \begin{cases}
      (\TW_{2/\alpha,1}, \dotsc, \TW_{2/\alpha,k}) & \text{case W} \\
      -(\Lambda_0, \dotsc, \Lambda_{k-1}). & \text{case G}^\omega
    \end{cases}
  \end{equation*}}
\fix{We write $\mathrm{BV}_{2/\alpha}(\omega)$ for the law of
  $-\Lambda_0$ in case G$^\omega$.}
  
  \item   For any $\varepsilon>0$, there are $C_\varepsilon,N_\varepsilon$ such that for $N\geq N_\varepsilon$, with probability at least $1-\varepsilon$,
  \[
    \lambda_1\geq 2+C_\varepsilon N^{-2/3}.
  \]
   
  \item
  For any fixed $x\in \mathbb{R}$, there exists a constant $C_x$, such that
  \[
    \E \#\left\{j:\; \lambda_j \geq 2-xN^{-2/3}\right\}\leq C_x.
  \]
  
  \item
  For some $c_\varepsilon,N_\varepsilon$ and any $N\geq N_\varepsilon$,  with probability at least $1-\varepsilon$
  \begin{equation*}
    \lambda_1-\lambda_2\geq c_\varepsilon N^{-2/3}.  
\end{equation*}
In other words, $\lambda_1 - \lambda_2 = \Theta_{\Pr }(N^{-2/3})$.

 \item \textcolor{black}{There exists $\kappa > 0$ such that if}
$b_N \rightarrow \infty$ so that $b_N =
  O(N^{\epsilon})$ for any $\epsilon > 0$, then 
 we have a.a.s.\ that
  \[
    \#\{j : \lambda_j > 2 - b_N N^{-2/3}\}
    \geq \kappa  b_N^{3/2}.
  \]

\end{enumerate}

\end{lemma}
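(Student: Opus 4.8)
The plan is to prove the statement first for the base case of a scaled GUE/GOE matrix with $J=0$, using rigidity of eigenvalues, and then to transfer it to sub-critically spiked Wigner matrices $W_{J,N}$ satisfying Assumption W via the Lindeberg swapping argument promised at the start of the subsection. For the base case, recall the standard rigidity estimate for GUE/GOE: with overwhelming probability, for all $j$ in the bulk and up to the edge, $|\lambda_j - \gamma_j| \lesssim N^{-2/3} \min\{j, N+1-j\}^{-1/3} \cdot N^{o(1)}$, where $\gamma_j$ is the $j$-th classical quantile of the semicircle law, determined by $\int_{\gamma_j}^2 \psc(x)\,dx = j/N$. Near the right edge, the semicircle density behaves like $\psc(x) \sim c\sqrt{2-x}$, so $\gamma_j \approx 2 - c' (j/N)^{2/3}$ for an explicit constant $c'$; equivalently, the number of classical quantiles exceeding $2 - t N^{-2/3}$ is $\asymp t^{3/2}$ for $t$ in the relevant range.

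The key observation is that we want a \emph{lower} bound on the eigenvalue count $\#\{j:\lambda_j > 2 - b_N N^{-2/3}\}$, so we need the eigenvalues not to be pushed too far to the left of their classical locations. Set $t = b_N$ and choose an index $j^* = j^*(N)$ with $\gamma_{j^*} \geq 2 - \tfrac12 b_N N^{-2/3}$ and $j^* \asymp b_N^{3/2}$ (possible since the quantiles accumulate at the rate just described, using $b_N \to \infty$). For all $j \leq j^*$, rigidity gives $\lambda_j \geq \gamma_j - N^{-2/3} j^{-1/3} N^{o(1)} \geq \gamma_j - N^{-2/3} N^{o(1)}$; since $b_N$ itself is $N^{o(1)}$ but diverges, the error term $N^{o(1)}$ is eventually smaller than, say, $\tfrac12 b_N$ — here one must be slightly careful: rigidity provides an error of the form $N^{\delta}$ for the $\delta$ one fixes in advance with overwhelming probability, and $b_N = O(N^\epsilon)$ for \emph{every} $\epsilon>0$ is compatible with $b_N$ diverging but not with a fixed polynomial lower bound, so one chooses $\delta$ small and notes the inequality $\gamma_j - N^{-2/3+\delta} \geq 2 - b_N N^{-2/3}$ may fail for the very smallest $b_N$. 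The clean fix is to prove the count is $\geq \kappa b_N^{3/2}$ only for $b_N$ above a slowly growing threshold, and absorb small $b_N$ into the constant — but since $b_N \to \infty$, "eventually" suffices, which is exactly what a.a.s.\ allows. Thus a.a.s.\ $\lambda_{j^*} > 2 - b_N N^{-2/3}$, and since the eigenvalues are ordered this forces $\#\{j:\lambda_j > 2-b_N N^{-2/3}\} \geq j^* \asymp b_N^{3/2}$, giving the claim with a suitable $\kappa>0$.

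For the extension to sub-critically spiked Wigner matrices under Assumption W, the argument follows the template used for the other parts of Lemma \ref{lemma three points}: the spike $Jww^\star$ with $J<1$ does not create an outlier eigenvalue (by the BBP threshold) and perturbs the top of the spectrum only at scale $O(N^{-2/3})$ in the worst relevant sense, while the moment-matching condition (up to third order) together with the Lindeberg replacement strategy of \cref{sec:wigner} transfers the eigenvalue-counting statement — which can be written as an expectation of a smooth-ish linear statistic, or more robustly deduced from the local semicircle law / rigidity, both of which are known for general Wigner matrices under Assumption W — from the Gaussian case to the general case. I expect the main obstacle to be the bookkeeping at the boundary where $b_N$ is allowed to grow arbitrarily slowly: one must make sure the rigidity error, the semicircle-quantile asymptotics $\#\{j:\gamma_j > 2-tN^{-2/3}\} \asymp t^{3/2}$, and the spike perturbation are all controlled \emph{uniformly} enough in this regime that the constant $\kappa$ can be chosen once and for all; invoking a.a.s.\ (rather than w.o.p.\ with a quantitative rate) is what makes this manageable.
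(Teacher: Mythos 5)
Your route via rigidity is genuinely different from the paper's, which cites Gustavsson's Lemmas~2.2--2.3 directly: in the GUE case the paper uses $\E\mathcal{N}_{b_N} = \tfrac{2}{3\pi}b_N^{3/2} + O(1)$ and $\Var\mathcal{N}_{b_N} \lesssim \log b_N$, and finishes with Chebyshev; GOE is handled via the Forrester--Rains comparison bounds, and the Wigner case by bracketing $\mathcal{N}_W$ between smoothed Stieltjes functionals $g(W,E\pm\ell)$ and applying the Lindeberg swapping machinery (Propositions 6.3 and 6.4), then Cauchy interlacing for the spike. That variance bound $O(\log b_N)$ is exactly what makes the threshold statement uniform in slowly growing $b_N$.

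This is also where your argument has a real gap, one you yourself flag but then wave past. Standard rigidity near the edge provides, \emph{for each fixed} $\delta>0$, a w.o.p.\ bound $|\mu_j-\gamma_j|\lesssim N^{-2/3+\delta}j^{-1/3}$. Plugging in $j^*\asymp b_N^{3/2}$ requires $b_N^{3/2}\gtrsim N^{\delta}$ to beat the error, but the hypothesis allows, e.g., $b_N=\log\log N$, which never exceeds $N^\delta$ for any \emph{fixed} $\delta>0$, no matter how large $N$ is. Your proposed fix, ``since $b_N\to\infty$, eventually suffices,'' does not repair this: one cannot choose $\delta$ after observing $b_N$, and the a.a.s.\ quantifier does not let you intersect the w.o.p.\ events over all $\delta>0$. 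The argument would require a rigidity statement with at most logarithmic (not $N^\delta$) error at the edge, which you do not cite and which is strictly stronger than what is usually invoked. The paper's use of Gustavsson's explicit mean/variance for the counting function sidesteps the issue entirely, since $\Var\mathcal{N}_{b_N}\lesssim\log b_N \ll b_N^3$ holds whenever $b_N\to\infty$, however slowly. Finally, your extension to Wigner matrices is asserted rather than carried out; the moment-matching transfer is not automatic for an indicator-function statistic and is the substance of the paper's argument in Section 6.2.
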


\subsubsection{\textcolor{black}{Derivatives of logarithmic statistics}}
\textcolor{black}{The next two lemmas describe asymptotic behavior of derivatives of $G$ at $\hat{\gamma}=2+b^2N^{-2/3}\log N$, and of the closely related statistics
\[
\frac{1}{N}\sum_{j=2}^N\frac{1}{(\lambda_1-\lambda_j)^k},\qquad\text{for }k=1,2.
\]
} 

\begin{lemma}
\label{lemma derivatives of G}\textcolor{black}{Suppose Assumption W
  \fix{or G$^\omega$} holds. Let $G(z)=\beta z - \frac{1}{N}\sum_{j=1}^N \log(z - \lambda_{j})$ with $\beta=1+bN^{-1/3}\sqrt{\log N}$, and $\hat{\gamma}=2+b^2N^{-2/3}\log N$.} Denote the $l$-th derivative of $G(z)$ as $%
G^{(l)}(z),$ \textcolor{black}{and let $b_+=\max\{0,b\}$. Then, for $b\neq 0$,}%
\begin{align}
\label{eq:DG-bound1}
    G^{(l)}( \hat{\gamma})
    &= \begin{cases}
        2b_+N^{-1/3}\log ^{1/2}N+o_{\Pr}\left( N^{-1/3}\log ^{-1/4}N\right) & 
        \text{for }l=1, \\ 
        (-1)^{l}\frac{(2l-4)!}{(l-2)!}
    \left( \frac{N^{1/3}}{2\left\vert b\right\vert \log^{1/2}N}\right)^{2l-3}
    ( 1+o_{\Pr}(1)) & \text{for }l\geq 2,
    \end{cases}  
\end{align}
\end{lemma}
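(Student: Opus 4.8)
The plan is to analyze each derivative $G^{(l)}(\hat\gamma)$ by splitting the sum defining $G$ into the contribution of the near-edge eigenvalues and the contribution of the bulk. Recall $G^{(l)}(z) = \beta\mathbf{1}_{\{l=1\}} + (-1)^l (l-1)!\, N^{-1}\sum_{j=1}^N (z-\lambda_j)^{-l}$ for $l\ge 1$ (with the convention that for $l\ge 2$ the $\beta z$ term drops out). At $z=\hat\gamma = 2 + b^2 N^{-2/3}\log N$, the distance from $\hat\gamma$ to a near-edge eigenvalue $\lambda_j = 2 + O(N^{-2/3})$ is of order $N^{-2/3}\log N$, while the distance to a bulk eigenvalue near $2-\delta$ is of order $\delta$.

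For $l\ge 2$ the plan is to show the sum is dominated by the \emph{bulk} near the edge, where the semicircle density behaves like $\sqrt{2-x}$. Heuristically, $N^{-1}\sum_j (\hat\gamma-\lambda_j)^{-l} \approx \int_{-2}^{2} \frac{\rho_{\rm sc}(x)}{(\hat\gamma-x)^l}\,dx$, and since $\rho_{\rm sc}(x) \sim \frac{1}{\pi}\sqrt{2-x}$ as $x\uparrow 2$, substituting $t = 2-x$ gives $\approx \frac{1}{\pi}\int_0^{\cdot}\frac{\sqrt t}{(\eta+t)^l}\,dt$ with $\eta = \hat\gamma - 2 = b^2N^{-2/3}\log N$. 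This integral is $\asymp \eta^{3/2-l}$ for $l\ge 2$, and the exact constant $\int_0^\infty \sqrt{t}(1+t)^{-l}\,dt = \frac{\pi}{2}\cdot\frac{(2l-4)!}{(l-2)!\,[(l-2)!]\,4^{l-2}}$-type Beta-function evaluation produces the stated $\frac{(2l-4)!}{(l-2)!}$ combinatorial factor after one tracks the $\frac{1}{\pi}$ and the $\eta^{3/2-l} = (2|b|\log^{1/2}N\, N^{-1/3})^{3-2l}\cdot(\text{const})$. I would make this rigorous using \cref{CLT2}-type rigidity estimates, or more directly the local law for the Stieltjes transform and its derivatives; the error is $o_{\Pr}(1)$ relative to the main term. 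One must also check that the handful of eigenvalues with $\lambda_j > 2$ (by \cref{lemma three points}(iii) there are $O_{\Pr}(1)$ of them within $O(N^{-2/3})$ of the edge) contribute only $N^{-1}\cdot O_{\Pr}(1)\cdot (N^{-2/3}\log N)^{-l} = O_{\Pr}(N^{2l/3 - 1}\log^{-l}N)$, which is indeed lower order than $N^{(2l-3)/3}\log^{-(2l-3)/2}N$ precisely when $l\ge 2$ — this is the reason the dichotomy at $l=1$ vs $l\ge 2$ appears.

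For $l=1$ the near-edge eigenvalues are no longer negligible relative to the bulk, because the bulk integral $\frac{1}{\pi}\int_0^{\cdot}\sqrt t(\eta+t)^{-1}dt$ converges to a finite deterministic constant (absorbed into the definition of $F(\beta)$ and the known large-$N$ asymptotics of the spherical integral), while the genuine fluctuation comes from $G^{(1)}(\hat\gamma) = \beta - \frac1N\sum_j (\hat\gamma-\lambda_j)^{-1}$. Here I would use the known first-order expansion: $\frac1N\sum_j(\hat\gamma - \lambda_j)^{-1} = m_{\rm sc}(\hat\gamma) + (\text{fluctuation})$, and $m_{\rm sc}(2+\eta) = 1 - \sqrt{\eta} + O(\eta)$ (with the branch giving $m_{\rm sc}(2)=1$); since $\beta = 1 + bN^{-1/3}\log^{1/2}N$ and $\sqrt\eta = |b|N^{-1/3}\log^{1/2}N$, we get $G^{(1)}(\hat\gamma) = (b + |b|)N^{-1/3}\log^{1/2}N + \ldots = 2b_+N^{-1/3}\log^{1/2}N + \ldots$. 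The error term of size $o_{\Pr}(N^{-1/3}\log^{-1/4}N)$ should follow from a quantitative local law for $\frac1N\sum_j(\hat\gamma-\lambda_j)^{-1}$ at spectral parameter with imaginary part replaced by the distance $\eta \gg N^{-2/3}$ to the edge; here $\log^{-1/4}N$ versus the natural $\log^{1/2}N$ scale leaves ample room, so even a crude bound suffices — the subtlety is only in getting the \emph{leading} constant $2b_+$ exactly right, including the sign, which is where $m_{\rm sc}' \to \infty$ at the edge forces one to be careful about which side of $2$ one sits on (hence $\hat\gamma > 2$ strictly, using $b\neq 0$).

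The main obstacle I anticipate is making the bulk-integral-plus-error decomposition fully rigorous with the right error order — in particular, controlling $\frac1N\sum_j (\hat\gamma - \lambda_j)^{-l}$ near the edge where individual summands are large and the naive concentration fails. The clean way is to invoke the local semicircle law (or rigidity of eigenvalues, \cite{johnstone2020logarithmic}) down to scale $N^{-2/3+\epsilon}$, which is comfortably below $\eta \asymp N^{-2/3}\log N$, combined with \cref{lemma three points} to handle the $O_{\Pr}(1)$ outliers above $2$. For the spiked case $J\in[0,1)$ and the moment-matching, one transfers everything from GUE/GOE via the Lindeberg swap already invoked elsewhere in the paper, using that the spike at sub-critical strength perturbs $\lambda_1$ and the edge statistics by $O(N^{-2/3})$ — negligible against all the scales appearing in \eqref{eq:DG-bound1}. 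A secondary bookkeeping obstacle is matching the exact constant $\frac{(2l-4)!}{(l-2)!}$: I would verify it by writing $\int_0^\infty t^{1/2}(1+t)^{-l}dt = B(3/2, l-3/2) = \frac{\Gamma(3/2)\Gamma(l-3/2)}{\Gamma(l)}$ and simplifying the half-integer Gamma values, then dividing by $\pi$ and folding in the factor $(-1)^l(l-1)!$ from differentiating the logarithm and the $\eta^{3/2-l}$ from rescaling.
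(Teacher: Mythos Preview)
Your treatment of $l\ge 2$ is essentially the paper's: approximate $N^{-1}\sum_j(\hat\gamma-\lambda_j)^{-l}$ by $\int(\hat\gamma-\mu)^{-l}\psc(\mu)\,\diff\mu = c_l^{-1}m_{\rm SC}^{(l-1)}(\hat\gamma)$, read off the leading term from the explicit $m_{\rm SC}$, and control the remainder. The paper does this for GUE via the G\"otze--Tikhomirov one-point bound together with the determinantal variance inequality $\Var\bigl(N^{-1}\sum f(\mu_j)\bigr)\le N^{-1}\int f^2\rho_N$, rather than the local law you invoke, but either toolkit suffices at this precision.

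The genuine gap is in your $l=1$ argument. You write that ``$\log^{-1/4}N$ versus the natural $\log^{1/2}N$ scale leaves ample room, so even a crude bound suffices'' for the required error $o_{\Pr}(N^{-1/3}\log^{-1/4}N)$. This is exactly backwards: the crude bounds land \emph{on} the threshold, not below it. With $b_N=|b|\sqrt{\log N}$ and $\epsilon_N=b_N^2 N^{-2/3}$, the variance inequality gives
\[
\Var s_N(1)\le N^{-1}\int(\hat\gamma-\mu)^{-2}\rho_N(\mu)\,\diff\mu \asymp N^{-1}\epsilon_N^{-1/2}=N^{-2/3}b_N^{-1},
\]
so $\sqrt{\Var}\asymp N^{-1/3}b_N^{-1/2}= N^{-1/3}\log^{-1/4}N$ --- big-$O$, not little-$o$. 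The standard local law outside the spectrum does no better: it carries an $N^{\epsilon}$ loss and requires $\kappa=\hat\gamma-2\gtrsim N^{-2/3+\epsilon}$, whereas here $\kappa\asymp N^{-2/3}\log N$ sits only logarithmically above $N^{-2/3}$, and you cannot trade an $N^{\epsilon}$ loss against a $\log N$ gain. The subtlety you locate in ``getting the leading constant $2b_+$ exactly right'' is in fact the easy part (pure semicircle computation); the hard part is precisely the error term you dismiss.

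The paper closes this gap with an additional ingredient you are missing. It introduces a smooth partition of unity $\psi_1+\psi_2=1$ on the scale $\Delta_N(\mu)=N^{2/3}(\mu-2)/b_N^2$ and splits $s_N(1)=\bar s_1(c)+\bar s_2(c)$ into a near-edge piece (supported where $|\mu-2|\lesssim c\,\epsilon_N$) and a bulk piece. The bulk variance then picks up an extra $c^{-1/2}$ from \eqref{eq:c1bd}, so $\sqrt{\Var\bar s_2(c)}\lesssim c^{-1/4}N^{-1/3}b_N^{-1/2}$, which is made $o_{\Pr}$ by sending $c\to\infty$. The near-edge piece is controlled by the Basor--Widom mesoscopic CLT: writing $\bar s_1=(N^{1/3}b_N^2)^{-1}\sum_j\varphi(\Delta_N(\mu_j))$ for a Schwartz function $\varphi$, one has $\sum_j\varphi(\Delta_N(\mu_j))=\alpha_N+o_{\Pr}(1)$ with deterministic $\alpha_N$, yielding $\bar s_1-\E\bar s_1=o_{\Pr}(N^{-1/3}b_N^{-2})$, far below threshold. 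Without this split and the mesoscopic CLT (or some equivalent sharp fluctuation result at the logarithmic edge scale), you cannot upgrade $O_{\Pr}$ to $o_{\Pr}$ in the $l=1$ case.
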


\begin{lemma}
\label{lemma lambda1statistic}
\textcolor{black}{Let $C\in\mathbb{R}$ be fixed. Then, under
  \fix{either Assumption W or G$^\omega$,} we have}
\begin{align} \label{eq:p4-bd}
    \frac{1}{N} \sum_{j=1}^N \frac{1}{2 + CN^{-\frac{2}{3}} - \textcolor{black}{\lambda_j}}
    = 1 + O_{\Pr}(N^{-1/3}),
    \qquad
    \frac{1}{N} \sum_{j=1}^N \frac{1}{(2 + C N^{-\frac{2}{3}}- \textcolor{black}{\lambda_j})^2}
    = O_{\Pr}(N^{1/3}),
  \end{align}  
  and
\begin{align}\label{inv mom 1}
\frac{1}{N}\sum_{j=2}^{N}\frac{1}{\lambda _{1}-\lambda _{j}}
= 1+O_{\Pr}\left( N^{-1/3}\right) , \qquad 
\frac{1}{N}\sum_{j=2}^{N}\frac{1}{\left( \lambda _{1}-\lambda
_{j}\right) ^{2}}
= O_{\Pr}\left(N^{1/3}\right) .
\end{align}
\end{lemma}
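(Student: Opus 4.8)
\textbf{Proof proposal for Lemma \ref{lemma lambda1statistic}.}

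The plan is to prove the two pairs of estimates separately, but both via the same mechanism: split the empirical Stieltjes-type sums into a ``bulk'' contribution and an ``edge'' contribution, control the bulk using local-law / rigidity estimates for (spiked) Wigner matrices, and control the edge using the Tracy--Widom convergence and counting bounds collected in Lemma \ref{lemma three points}. By the Lindeberg-swapping reduction announced before the statement, it suffices to carry this out for $W_N$ a scaled GUE/GOE with $J=0$ and then transfer; I will describe the GUE/GOE case.

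\emph{First display, point $z=2+CN^{-2/3}$.} I would write $\frac{1}{N}\sum_j (z-\mu_j)^{-1} = m_N(z)$, the Stieltjes transform of the empirical spectral distribution evaluated just above the edge. The deterministic semicircle Stieltjes transform $m_{\rm sc}(z)$ satisfies $m_{\rm sc}(2)=1$ and $m_{\rm sc}(2+t) = 1 - \sqrt{t}+O(t)$ for small $t>0$, so $m_{\rm sc}(2+CN^{-2/3}) = 1 + O(N^{-1/3})$; the job is to show $m_N(z)-m_{\rm sc}(z)=O_{\Pr}(N^{-1/3})$. For eigenvalues in the bulk (say $\mu_j \le 2 - N^{-2/3+\delta}$) this follows from the optimal local law / eigenvalue rigidity $|\mu_j - \gamma_j| \prec N^{-2/3} j^{-1/3}$ plus a standard summation-by-parts argument; the contribution of each such term to the difference is controlled, and summing gives $O_{\Pr}(N^{-1/3})$ up to logarithmic factors, which is absorbed by the stated $O_{\Pr}$. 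For the finitely many edge eigenvalues $\mu_j \in [2-N^{-2/3+\delta}, z)$ — there are $O_{\Pr}(1)$ of them by Lemma \ref{lemma three points}(iii) — I bound $|z-\mu_j|^{-1}$ crudely; since $z = 2+CN^{-2/3}$ sits at distance $\asymp N^{-2/3}$ above the edge and each such $\mu_j$ is $2 + \Theta_{\Pr}(N^{-2/3})$, each term is $O_{\Pr}(N^{2/3})$, and divided by $N$ each contributes $O_{\Pr}(N^{-1/3})$. The second display term $\frac{1}{N}\sum_j (z-\mu_j)^{-2} = m_N'(z)$: here $m_{\rm sc}'(2+CN^{-2/3}) \asymp N^{1/3}$ already, so the claim is just that the random quantity is of the same order; the bulk part is $O_{\Pr}(N^{1/3})$ by rigidity as above, and each of the $O_{\Pr}(1)$ edge terms is $\frac{1}{N}\cdot O_{\Pr}(N^{4/3}) = O_{\Pr}(N^{1/3})$.

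\emph{Second display, point $z=\lambda_1$ with the $j=1$ term omitted.} This is the more delicate case because $z$ now coincides with an eigenvalue, so the omitted diagonal singularity must be handled and the gap $\lambda_1-\lambda_2$ enters. I would use the rigidity of $\lambda_1$, namely $\lambda_1 = 2 + O_{\Pr}(N^{-2/3})$ from Lemma \ref{lemma three points}(i), together with the level-repulsion lower bound $\lambda_1-\lambda_2 = \Theta_{\Pr}(N^{-2/3})$ from Lemma \ref{lemma three points}(iv). Conditionally on the high-probability event that $\lambda_1 \le 2 + C'N^{-2/3}$ and $\lambda_1 - \lambda_j \ge c' N^{-2/3}$ for all $j\ge 2$, I compare $\frac{1}{N}\sum_{j\ge2}(\lambda_1-\lambda_j)^{-k}$ with $\frac{1}{N}\sum_{j}(z_0 - \mu_j)^{-k}$ at a nearby deterministic point $z_0 = 2+C'N^{-2/3}$, already controlled by the first part. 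The bulk eigenvalues contribute essentially the same (their distance from $\lambda_1$ and from $z_0$ differ by $O(N^{-2/3})$, negligible relative to their $\Theta(1)$-scale distance except very near the edge), and the finitely many edge eigenvalues $\lambda_2,\lambda_3,\dots$ satisfy $\lambda_1-\lambda_j = \Theta_{\Pr}(N^{-2/3})$ by the joint Tracy--Widom convergence, giving each term $O_{\Pr}(N^{2/3})$ resp.\ $O_{\Pr}(N^{4/3})$, hence $O_{\Pr}(N^{-1/3})$ resp.\ $O_{\Pr}(N^{1/3})$ after dividing by $N$. The leading $1$ in the first estimate again comes from $m_{\rm sc}(2)=1$.

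\emph{Main obstacle.} The delicate point is the second display: the quantity $\frac{1}{N}\sum_{j\ge2}(\lambda_1-\lambda_j)^{-2}$ is genuinely sensitive to the spacing of the top eigenvalues, and a naive bound that replaces all $\lambda_j$ by their classical locations is not quite enough because the edge eigenvalues fluctuate on the same $N^{-2/3}$ scale. The resolution is to treat the $O_{\Pr}(1)$ top eigenvalues exactly via Lemma \ref{lemma three points}(i),(iii),(iv) — i.e.\ knowing all of $\lambda_1,\dots,\lambda_k$ are $2+\Theta_{\Pr}(N^{-2/3})$ and mutually separated at scale $N^{-2/3}$ — and only use rigidity for the genuinely bulk eigenvalues; one must check that the transition region (eigenvalues at distance $N^{-2/3}\log N$, say, from the edge) is handled by Lemma \ref{lemma three points}(v) which guarantees enough eigenvalues there that the rigidity-type sum bound applies with the correct power. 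The Lindeberg transfer to general Wigner $W_N$ is then routine given the machinery of \cref{sec:wigner}, since all quantities are polynomial functions of resolvent entries at spectral parameter with imaginary part of order $N^{-2/3}$, within reach of the Green-function comparison method.
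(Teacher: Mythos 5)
Your overall plan — split into bulk and edge, control the bulk by a Stieltjes-transform / rigidity estimate, control the edge by the gap and the $O_{\Pr}(1)$ eigenvalue count — is indeed the paper's strategy. For the first display the paper simply cites Proposition~3 of the companion paper \cite{johnstone2020logarithmic} rather than reproving it, but your sketch is compatible with that.

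The second display is where your argument has a genuine gap. You condition on $\lambda_1 \leq z_0 = 2+C'N^{-2/3}$ and try to compare $\tfrac{1}{N}\sum_{j\geq 2}(\lambda_1-\lambda_j)^{-k}$ to $\tfrac{1}{N}\sum_j(z_0-\lambda_j)^{-k}$, arguing that the bulk contributions are ``essentially the same''. But with $\lambda_1\leq z_0$, one has $\lambda_1-\lambda_j\leq z_0-\lambda_j$, so $(\lambda_1-\lambda_j)^{-k}\geq(z_0-\lambda_j)^{-k}$: the comparison yields a \emph{lower} bound, not the upper bound you need. The two-sided ``differ by $O(N^{-2/3})$'' heuristic also leaves an uncontrolled intermediate zone (eigenvalues at distance between $N^{-2/3}$ and $N^{-2/3+\delta}$ from $2$, of which there can be $\gg 1$ many, each contributing up to $N^{4/3}/N$ to the second sum). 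The paper avoids this by a clean one-sided argument: on the event $\lambda_1 > 2-CN^{-2/3}$ (Lemma~\ref{lemma three points}(ii), note the lower bound on $\lambda_1$), one has the deterministic inequality $\lambda_1-\lambda_j \geq 2-CN^{-2/3}-\lambda_j$ whenever $\lambda_j \leq 2-CN^{-2/3}$, and for the $\chi_N(C)=O_{\Pr}(1)$ remaining eigenvalues (Lemma~\ref{lemma three points}(iii)), $\lambda_1-\lambda_j\geq\lambda_1-\lambda_2=\Theta_{\Pr}(N^{-2/3})$ (Lemma~\ref{lemma three points}(iv)). So the reference point must sit \emph{below} the spectral edge, not above it.

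Two smaller points. First, your ``Main obstacle'' paragraph invokes Lemma~\ref{lemma three points}(v) for a transition region, but (v) is a \emph{lower} bound on the eigenvalue count near the edge and does not help bound the sum from above; the count that is actually used is the upper bound of (iii). Second, for the first inverse moment $\tfrac{1}{N}\sum_{j\geq2}(\lambda_1-\lambda_j)^{-1}$ you propose a direct comparison, but the paper's cleaner route is a one-line Cauchy--Schwarz: write $\tfrac{1}{N}\sum_{j\geq2}\tfrac{2-\lambda_1}{(2-\lambda_j)(\lambda_1-\lambda_j)}$ and bound it by $|2-\lambda_1|\big(\tfrac{1}{N}\sum(2-\lambda_j)^{-2}\big)^{1/2}\big(\tfrac{1}{N}\sum(\lambda_1-\lambda_j)^{-2}\big)^{1/2}$, which reduces the first-moment estimate entirely to the two second-moment bounds, sidestepping any delicate cancellation argument.
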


\subsubsection{Independence of the largest eigenvalue from the linear
  statistic} Our last preliminary result shows the asymptotic
independence of $\lambda_1$ and
$N^{-1}\sum_{j=1}^N\log|2-\lambda_j|$. \textcolor{black}{As discussed
  in the introduction, it may be of independent interest.}
\fix{ The proof
  for G(O/U)E is in Section \ref{sec:proof of CLT}, and for other
cases in the Appendix.}

\begin{proposition}\label{independence}
 \fix{ Suppose Assumption W or G$^\omega$ holds and let $\gamma = 2$.
Let $-\xi_{1N}$ denote the scaled logarithmic statistic in Theorem
\ref{CLT2} with $C = 0$ and again with the $-\frac{1}{3} \log N$ shift
in case G$^\omega$.
Then the random variables
$\xi_{1N}$ and $\xi_{2N} = N^{2/3}(\lambda_1 - 2)$ 
are asymptotically independent with limiting distribution given by
\[
  (\xi_{1N},\xi_{2N})\drightarrow
  \begin{cases}
    \mathcal{N}(0,1)\times \mathrm{TW}_{2/\alpha} & \text{case W} \\
    \mathcal{N}(0,1)\times \mathrm{BV}_{2/\alpha}(\omega) & \text{case
                                                            G$^\omega$.} 
  \end{cases}
\]}
\end{proposition}

\subsection{Proof strategy}

\textcolor{black}{Many} derivations in this paper revolve around the analysis of the integral
\[
  \int_{\mathcal{K}} \exp\left\{ (N/\alpha) G(z)\right\} \diff z.
\]
The fluctuations of this term differ qualitatively for $b < 0$ and $b
\geq 0$, and are considered in 
\cref{sec:negative-critical,sec:positive-critical} respectively.
In both cases the proofs involve Laplace approximation, but on
different contours.
\fix{In many respects, the role of the assumptions on the disorder
  $W_{J,N}$ and the resulting random matrix theoretic properties are
  concentrated in the results in Section
  \ref{sec:some-prel-results}. Given these 
  preliminaries, the analysis of the the contour integral, sections
  \ref{sec:negative-critical} and \ref{sec:positive-critical}, is the
  same, whether the assumptions on $W_N$ be Gaussian or Wigner.}
\textcolor{black}{We outline below the approach for the
  sub-critically spiked Wigner case (theorem \ref{thm:main}). The same
  strategy applies \textit{mutatis mutandis} for critically spiked
  G(O/U)E, theorem \ref{thm: triple} .} 

\subsubsection{\Cref{sec:negative-critical}: Negative critical case}

We use the vertical contour of \eqref{eq for z}, and a deterministic
choice for $\gamma$ suffices. Indeed, use the Stieltjes transform of
the semicircle law to make the approximation
\begin{align*}
  G'(z)
  = \beta - \frac{1}{N} \sum_{j=1}^N \frac{1}{z - \lambda_j}
  \approx \beta - \frac{z - \sqrt{z^2 - 4}}{2}.
\end{align*}
When \(b < 0\), the critical point of the approximation
is $\gamma=\hat{\gamma}+o(N^{-1 + \epsilon})$ for \(\hat{\gamma} = 2 +
b^2 N^{-2/3} \log N\) and any small positive
$\varepsilon$. 
Laplace approximation of the integral
\begin{align*}
  \int_{\mathcal{K}} \exp\left\{(N/\alpha)[G(z) - G(\hat{\gamma})]\right\} \diff z
\end{align*}
requires bounds on derivatives \(G^{(l)}(\hat{\gamma})\), for $l = 1, 2, 3$,
\textcolor{black}{provided by}  \cref{lemma derivatives of G}.
Having established that the fluctuations of \(F_{\alpha,N}\) depend asymptotically only on \(G(\hat{\gamma})\), it remains only to apply  \cref{CLT1}, conclude that \(F_{\alpha,N}\) is asymptotically Gaussian, and compute the correct centering and scaling constants.

\subsubsection{\Cref{sec:positive-critical}: Positive critical case}

When \(b \geq 0\), the deterministic approximation to \(G\) no longer has
a critical point along the real axis, and the approximation
\(\hat{\gamma}\) fails.
Indeed, \cref{lemma derivatives of G} shows that 
\(G'(\hat{\gamma})\) is of greater order than
when \(b < 0\), so \(G(z)\) oscillates too rapidly
along the vertical contour through \(\hat{\gamma}\).

We consider first $b > 0$, and instead use the contour of
\cref{fig:positive-contour}, which has a vertical part
\(\mathcal{K}_3\) through \(\mu=(\lambda_1 + \lambda_2)/2\) and a
keyhole part \(\mathcal{K}_1 \cup \mathcal{K}_2\) extending
horizontally from \(\mu\) and surrounding \(\lambda_1\).
The integral turns out to be dominated by the keyhole part, with
\begin{equation} \label{eq:pos-crit}
  \frac{1}{2\pi\im}\int_{\mathcal{K}_1 \cup \mathcal{K}_2}
  \exp\left\{(N/\alpha) G(z)\right\} \diff z 
  = \exp\Bigl\{(N/\alpha) \hat{G}(\lambda_1) - \frac{\alpha - 1}{3} \log N + O_{\Pr}(\log \log N)\Bigr\},
\end{equation}
where
\begin{align}
  \label{eq:G-alpha}
  \hat{G}(\lambda_1)
  = \beta \lambda_1 - \frac{1}{N}\sum_{j=2}^N \log(\lambda_1 - \lambda_j).
\end{align}
The proof requires bounds on the derivatives
$\hat{G}^{(l)}(\lambda_1), l = 1, 2$ given in \cref{lemma
  lambda1statistic}.

In the boundary case $b=0$, the contributions of $\mathcal{K}_3$ and
$\mathcal{K}_1\cup\mathcal{K}_2$ are of the same order of magnitude,
so we consider instead the contour of the steepest descent. We
establish upper and lower bounds on the integral
and recover the right hand side of \eqref{eq:pos-crit} in this case also.

The analysis of \(\hat{G}(\lambda_1)\) is based on the approximation
\begin{align}\label{eq:lambda vs 2}
   \sum_{j=2}^N \log(\lambda_1 - \lambda_j)
  =  \sum_{j=1}^N \log|2 - \lambda_j|+ N(\lambda_1-2)+O_{\Pr}(1).
\end{align}
The right side sum can be handled by \cref{CLT2}.
The \(\lambda_1\) terms in \eqref{eq:G-alpha} and \eqref{eq:lambda vs 2}
both contribute to Tracy-Widom fluctuations.
The last part of the argument hinges on the asymptotic independence of \(\lambda_1\) and  \(N^{-1} \sum_{j=1}^N \log|2 - \lambda_j|\), \textcolor{black}{which is established in \cref{independence}.}

\section{Negative-critical regime}
\label{sec:negative-critical}


For the case $b<0,$ we deform $\mathcal{K}$ so that it is the vertical
line passing through $\hat{\gamma},$ a point in $\R$ that
approximates the critical point $\gamma$ of the function $G(z).$
Note that 
\begin{align*}
  G'(z)
  &=\beta \textcolor{black}{+} \frac{1}{N}\sum_{j=1}^N \frac{1}{\textcolor{black}{\lambda_{j}-z}},
\end{align*}%
where $\frac{1}{N}\sum_{j=1}^{N}\frac{1}{\textcolor{black}{\lambda _{j}-z}}$ is the
Stieltjes transform of the spectral distribution of \textcolor{black}{$W_{J,N}.$}
For $z>2,$ it must converge to the Stieltjes transform of
the semi-circle law, that is to%
\begin{equation*}
m_{SC}(z)=\frac{-z+\sqrt{z^{2}-4}}{2}.
\end{equation*}%
\textcolor{black}{Such a convergence follows e.g. from \cite[thm 2.4]{BenaychG2018}, the interlacing inequalities linking the eigenvalues of $W_N$ and $W_{J,N}$, and the convergence of $\lambda_1$ to $2$ implied e.g.~by part (i) of \cref{lemma three points}.} 
Solving $\beta +m_{sc}(z)=0$ for $z$, \textcolor{black}{we obtain $z=1/\beta+\beta$. Therefore, for}
\begin{equation}
    \label{inverse
    temperature}
    \beta
    = 1 + b N^{-1/3} \sqrt{\log N},
\end{equation}
we have $z=\hat{\gamma}+o(N^{-1+\varepsilon })$ for any $\varepsilon >0$, where 
\begin{equation*}
  \hat{\gamma}
  =2+b^{2}N^{-2/3}\log N.
\end{equation*}

\begin{lemma}
\label{Lemma C-}Suppose that \textcolor{black}{Assumption W holds and} $b<0$. Then%
\begin{equation*}
\int_{\hat{\gamma} - \im\infty }^{\hat{\gamma} + \im\infty }\exp\left\{\frac{N}{\alpha}[ G(z)-G(\hat{\gamma})]\right\} \diff z
= 2\sqrt{\pi \alpha \abs{b}}\frac{\im \log^{1/4}N}{N^{2/3}}\left( 1+o_{\Pr}(1)\right) .
\end{equation*}
\end{lemma}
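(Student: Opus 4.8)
The plan is to apply the Laplace (saddle-point) method to the integral along the vertical line through $\hat\gamma$. First I would Taylor-expand $G(z) - G(\hat\gamma)$ around $z = \hat\gamma$ along the contour $z = \hat\gamma + \im t$, $t \in \mathbb{R}$. Writing out the expansion,
\begin{align*}
  \frac{N}{\alpha}\bigl[G(\hat\gamma + \im t) - G(\hat\gamma)\bigr]
  = \frac{N}{\alpha}\Bigl( \im t\, G'(\hat\gamma) - \tfrac{t^2}{2} G''(\hat\gamma) - \tfrac{\im t^3}{6} G'''(\hat\gamma) + \cdots \Bigr).
\end{align*}
From \cref{lemma derivatives of G} with $b < 0$ (so $b_+ = 0$), the first-derivative term $G'(\hat\gamma) = o_{\Pr}(N^{-1/3}\log^{-1/4}N)$, so $\frac{N}{\alpha}\im t\, G'(\hat\gamma)$ is $o_{\Pr}(N^{2/3}\log^{-1/4}N)\cdot t$ — negligible relative to the quadratic term on the relevant scale of $t$. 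The key second-derivative term is $G''(\hat\gamma) = \bigl(\frac{N^{1/3}}{2|b|\log^{1/2}N}\bigr)(1 + o_{\Pr}(1))$, so the quadratic term is $-\frac{N}{\alpha}\frac{t^2}{2}\cdot\frac{N^{1/3}}{2|b|\log^{1/2}N}(1+o_{\Pr}(1))$, which concentrates the integral on the scale $t \asymp N^{-2/3}\log^{1/4}N$. On that scale, the cubic term with $G'''(\hat\gamma) \asymp (N^{1/3}/\log^{1/2}N)^3$ contributes $\frac{N}{\alpha}t^3 G'''(\hat\gamma) \asymp N \cdot N^{-2}\log^{3/4}N \cdot N\log^{-3/2}N = \log^{-3/4}N \to 0$, and similarly higher-order terms vanish; this is exactly why the bounds on $G^{(l)}(\hat\gamma)$ for $l = 1,2,3$ in \cref{lemma derivatives of G} are the inputs needed.

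The next step is the rescaling. Substitute $t = s\, \sigma_N$ where $\sigma_N^{-2} = \frac{N}{\alpha}G''(\hat\gamma) = \frac{N^{4/3}}{2\alpha|b|\log^{1/2}N}(1 + o_{\Pr}(1))$, i.e. $\sigma_N = \bigl(2\alpha|b|\bigr)^{1/2}\,\frac{\log^{1/4}N}{N^{2/3}}(1+o_{\Pr}(1))$, which already produces the claimed $\frac{\log^{1/4}N}{N^{2/3}}$ prefactor. Then $\diff z = \im\, \diff t = \im\,\sigma_N\, \diff s$, and the exponent becomes $-\frac{s^2}{2}(1 + o_{\Pr}(1)) + (\text{lower-order remainder})$, so the integral is $\im\,\sigma_N \int_{\mathbb{R}} e^{-s^2/2}(1+o_{\Pr}(1))\,\diff s = \im\,\sigma_N\sqrt{2\pi}(1+o_{\Pr}(1))$. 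Collecting constants, $\sqrt{2\pi}\cdot(2\alpha|b|)^{1/2} = 2\sqrt{\pi\alpha|b|}$, giving the stated $2\sqrt{\pi\alpha|b|}\,\frac{\im\log^{1/4}N}{N^{2/3}}(1+o_{\Pr}(1))$.

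The remaining and genuinely delicate step is to justify that the integral over the tails $|t| \gtrsim \sigma_N \log^\delta N$ (equivalently $|s| \gtrsim \log^\delta N$) is negligible — the Laplace approximation is only local, and the naive Taylor expansion controls $G(\hat\gamma + \im t)$ only for small $t$. The standard remedy is a two-region argument: on a moderate window $|t| \leq \eta$ for a small fixed $\eta > 0$ one uses convexity/monotonicity of $\Re[G(\hat\gamma + \im t) - G(\hat\gamma)]$ — namely that $\frac{\diff}{\diff t}\Re\,G(\hat\gamma+\im t) $ has a definite sign away from $0$, since $\Re\,G(\hat\gamma + \im t) = \beta\hat\gamma - \frac{1}{2N}\sum_j \log\bigl((\hat\gamma - \lambda_j)^2 + t^2\bigr)$ is strictly decreasing in $|t|$ — to get an exponentially small bound like $\exp\{-c N \log(1 + t^2 N^{2/3}/\log N)/\alpha\}$; and on the far tail $|t| > \eta$ one uses that $\Re\bigl[G(\hat\gamma+\im t) - G(\hat\gamma)\bigr] \leq -c\log(1 + t^2)$ so that $\exp\{(N/\alpha)\Re[\cdots]\}$ is integrable in $t$ with total mass $\exp\{-cN\}$, dwarfed by the $N^{-2/3}$-size main term. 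I expect this tail control — making the error uniform in the randomness of the $\lambda_j$, which is where part (iii) of \cref{lemma three points} (boundedness of the expected eigenvalue count near the edge) and rigidity-type inputs enter — to be the main obstacle; the local expansion itself is routine once \cref{lemma derivatives of G} is in hand.
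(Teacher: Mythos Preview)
Your proposal is correct and follows essentially the same Laplace-method approach as the paper: Taylor expand $G(\hat\gamma+\im t)-G(\hat\gamma)$, use \cref{lemma derivatives of G} for the derivative bounds, rescale by $\sigma_N\asymp N^{-2/3}\log^{1/4}N$, and control the tails via the monotonicity of $\Re G(\hat\gamma+\im t)$ in $|t|$. One small correction: the tail control is simpler than you anticipate---the paper needs neither part (iii) of \cref{lemma three points} nor rigidity, only that $\lambda_1<\hat\gamma$ a.a.s.\ (so that $|G'''(\hat\gamma+\im t)|\le |G'''(\hat\gamma)|$ for all real $t$, giving a global third-order Taylor bound) and that $|\lambda_N|$ is bounded a.a.s.\ for the far tail $|t|\gg N^2$.
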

\begin{proof} \textcolor{black}{As follows from  part (i) of \cref{lemma three points}, $\lambda_1-2=O_{\Pr}(N^{-2/3})$. Hence, $\lambda_1<\hat{\gamma}$ a.a.s., so we will assume without loss of generality that the latter inequality holds.}
Changing variables $z\mapsto \hat{\gamma}+\im t\frac{\log ^{1/4}N}{N^{2/3}}$, we represent the integral as
\begin{equation*}
\frac{\im \log ^{1/4}N}{N^{2/3}}\int_{-\infty }^{\infty }\exp \left\{ \frac{N}{\alpha}%
\tilde{G}(t)\right\} \mathrm{d}t,
\end{equation*}
where
\begin{equation*}
\tilde{G}(t):= G\left( \hat{\gamma}+\im t\frac{\log ^{1/4}N}{%
N^{2/3}}\right) -G\left( \hat{\gamma}\right) .
\end{equation*}%
Using the Lagrange form of the remainder in the Taylor expansions of the
real and imaginary parts of $G(z)-G\left( \hat{\gamma}\right) ,$ we arrive
at the inequality 
\begin{equation}
\left\vert G(z)-G\left( \hat{\gamma}\right) -G'\left( \hat{\gamma}%
\right) \left( z-\hat{\gamma}\right) -\frac{1}{2}G''\left( 
\hat{\gamma}\right) \left( z-\hat{\gamma}\right) ^{2}\right\vert \leq \frac{%
\left\vert z-\hat{\gamma}\right\vert ^{3}}{3}\sup_{\zeta \in \hat{\gamma}+ 
\im \mathbb{R}}\left\vert G'''\left( \zeta
\right) \right\vert .  \label{Taylor1}
\end{equation}%
On the event $\lambda _{1}<\hat{\gamma},$ the latter supremum is no larger
than $\left\vert G'''\left( \hat{\gamma}\right)
\right\vert $ because, for any $z\in \left( \hat{\gamma}-\im \infty ,%
\hat{\gamma}+\im \infty \right) ,$ $\left\vert z-\lambda
_{j}\right\vert ^{-3} \leq \left( \hat{\gamma}-\lambda _{j}\right) ^{-3}.$ Hence, we have%
\begin{equation}
\label{eq:remainder}
\left\vert \tilde{G}(t)-G'\left( \hat{\gamma}\right) \im t%
\frac{\log ^{1/4}N}{N^{2/3}}+\frac{1}{2}G''\left( \hat{\gamma}%
\right) t^{2}\frac{\log ^{1/2}N}{N^{4/3}}\right\vert \leq \frac{\left\vert
t\right\vert ^{3}}{3}\frac{\log ^{3/4}N}{N^{2}}\left\vert G'''\left( \hat{\gamma}\right) \right\vert .
\end{equation}%

\Cref{lemma derivatives of G} and inequality \eqref{eq:remainder} yield, for any fixed $C>0$ \textcolor{black}{and $b<0$},%
\begin{equation}
\int_{-C}^{C}\exp \left\{ \frac{N}{\alpha}\tilde{G}(t)\right\} \mathrm{d}t=\left( 1+o_{%
\Pr}\left( 1\right) \right) \int_{-C}^{C}\exp \left\{ -\frac{t^{2}}{%
4\alpha \abs{b}}\right\} \mathrm{d}t.  \label{part of integral}
\end{equation}%
Further, for any $t\in \mathbb{R}$, by definition,%
\begin{align*}
\func{Re}\tilde{G}(t)
&= -\frac{1}{ N}\sum_{j=1}^{N}\log \left\vert 1+\frac{\im t\log ^{1/4}N}{N^{2/3}\left( \hat{\gamma}-\lambda_{j}\right) }\right\vert \\
&= -\frac{1}{2 N}\sum_{j=1}^{N}\log \left( 1+\frac{t^{2}\log^{1/2}N}{N^{4/3}\left( \hat{\gamma}-\lambda _{j}\right)^2}\right) .
\end{align*}%
We will use the elementary inequality $\log(1+\delta)\geq\delta/2$ for $\delta \in [0,1]$. Note that the event $\mathcal{E}_0=\{\hat{\gamma}-\lambda_1>\frac{b^2}{2} N^{-2/3} \log N\}$ holds a.a.s. Conditionally on $\mathcal{E}_0$, for all $|t|\leq t_N :=\frac{b^2}{2}\log^{3/4}N$, we have
\[
    N\func{Re}\tilde{G}(t)<-\frac{t^2\log^{1/2}N}{4 N^{4/3}}\sum\nolimits_{j=1}^{N}\left( \hat{\gamma}-\lambda _{j}\right)^{-2}
    = -\frac{t^2\log^{1/2}N}{4 N^{1/3}}G''(\hat{\gamma}).
\]
Using \cref{lemma derivatives of G}, we conclude that for all $|t|\leq t_N$ \textcolor{black}{and $b\neq 0$} ,
\[
N\func{Re}\tilde{G}(t)<-\frac{t^2}{8 |b|}(1+o_{\Pr}(1)).
\]
Therefore, \textcolor{black}{by Chernoff's inequality,}
\begin{equation}
    \label{first section of the integral}
    \int_{-t_N}^{-C}\left\vert\exp\left\{\frac{N}{\alpha}\tilde{G}(t)\right\}\right\vert\diff t+\int_{C}^{t_N}\left\vert\exp\left\{\frac{N}{\alpha}\tilde{G}(t)\right\}\right\vert\diff t<(1+o_{\Pr}(1))
    \textcolor{black}{2\sqrt{8\pi\alpha|b|}}\exp \left\{-\frac{C^2}{8|b|\alpha}\right\}.
\end{equation}
Since $C$ can be chosen arbitrarily large, equations \eqref{part of integral} and \eqref{first section of the integral} yield
\begin{equation}
    \label{vertical part}
    \int_{\hat{\gamma}-\mathrm{i}t_N N^{-2/3}\log^{1/4} N }^{\hat{\gamma}+\mathrm{i}t_N N^{-2/3}\log^{1/4} N }\exp\left\{\frac{N}{\alpha}[ G(z)-G(\hat{\gamma})]\right\} \diff z
= 2\sqrt{\pi \alpha \abs{b}}\frac{\im \log^{1/4}N}{N^{2/3}}\left( 1+o_{\Pr}(1)\right).
\end{equation}

It remains to show that the contribution of the remaining parts of the integral is negligible. Clearly, it is sufficient to prove that 
\begin{equation}
\label{negligibility}
\int_{t_N}^{\infty}\left\vert\exp\left\{\frac{N}{\alpha}\tilde{G}(t)\right\}\right\vert\diff t=o_{\Pr}(N^{-k})
\end{equation}
for arbitrarily large fixed $k$.
Note that 
$\func{Re}\tilde{G}(t)$ is a strictly decreasing function of $t\in [t_N,\infty)$. Therefore,
\begin{align*}
\int_{t_N}^{N^2}\left\vert\exp\left\{\frac{N}{\alpha}\tilde{G}(t)\right\}\right\vert\diff t&<N^2 \exp\left\{\frac{N}{\alpha}\func{Re}\tilde{G}(t_N)\right\}\\
&<N^2\exp\left\{-\frac{|b|^3\log^{3/2}N}{32\alpha}(1+o_{\Pr}(1))\right\}\\
&=o_{\Pr}(N^{-k})
\end{align*}
for arbitrarily large fixed $k$.
For $t>N^2$, on the event $(\hat{\gamma}-\lambda_N)^2<\bar{C}$ that holds \textcolor{black}{a.a.s.} for some positive constant $\bar{C}$, we have
\begin{align*}
N\func{Re}\tilde{G}(t)&=-\frac{1}{2}\sum\nolimits_{j=1}^{N}\log\left(1+\frac{t^2 \log^{1/2} N}{N^{4/3} (\hat{\gamma}-\lambda_j)^2}\right)\\
&\leq -\frac{N}{2}\log\left(\frac{t^2 \log^{1/2} N}{N^{4/3} \bar{C}} \right).
\end{align*}
Therefore,
\begin{align*}
\int_{N^2}^{\infty}\left\vert\exp\left\{\frac{N}{\alpha}\tilde{G}(t)\right\}\right\vert\diff t&<
\int_{N^2}^{\infty}\left(\frac{t^2\log^{1/2}N}{N^{4/3}\bar{C}}\right)^{-\frac{N}{2\alpha}(1+o_{\Pr}(1))}\diff t=o_{\Pr}(N^{-k})
\end{align*}
for arbitrarily large fixed $k$ as well. Hence, \eqref{negligibility} indeed holds.
\end{proof}
Now we are ready to prove the following theorem.
Recall that $F_{\alpha,N} =\frac{\alpha}{%
2N}\log \mathcal{I}_{\alpha,N},$ where $\mathcal{I}_{\alpha,N}$ is as defined in \eqref{eq for z}. 

\begin{theorem}[Negative-critical regime] \label{thm:negat-crit-regime}
\textcolor{black}{Suppose Assumption W holds and} $\beta =1+bN^{-1/3}\log
^{1/2}N$ with $b<0$.
Then, \textcolor{black}{in the sub-critically spiked Wigner setting of theorem \ref{thm:main},}
\begin{equation} \label{eq:T3.2}
\frac{N}{\sqrt{\frac{\alpha}{12}\log N}}\left( F_{\alpha,N} -\frac{1}{4}\beta^2+\frac{\log N}{12 N}\right) 
\drightarrow \Normal(0,1).
\end{equation}
\fix{In case G$^\omega$, in theorem \ref{thm: triple}, the sign of the
  $\log N/12N$ term is reversed.} 
\proof
After rearranging \eqref{eq for z}, we have
\begin{equation*}
2 N F_{\alpha,N}
= \alpha \log C_{\alpha,N} + NG(\hat{\gamma}) + \alpha \log \frac{1}{2\pi
  \im }\int_{\hat{\gamma}-\im \infty }^{\hat{\gamma}+\im \infty }\exp
\left\{\frac{N}{\alpha}\left[
    G(z)-G\left(\hat{\gamma}\right)\right] \right\}
\diff z. 
\end{equation*}
For the first term, using Stirling's formula,
\begin{equation}
\label{eq:C_N-asymptotics}
\alpha \log C_{\alpha,N} =
\alpha \log \frac{\sqrt{2\pi }
  (N/\alpha)^{N/\alpha-1/2}e^{-N/\alpha}}{(N\beta/\alpha)^{N/\alpha-1}}
+ o(1) 
= -N\left(1+\log \beta \right) +\frac{\alpha}{2}\log N + O(1).
\end{equation}%
For the second term we have 
\begin{align*}
N G(\hat{\gamma})
&= N \beta \hat{\gamma} - \sum_{j=1}^{N}\log \left(
                    \hat{\gamma}-\lambda _{j}\right) \\ 
&= 2\beta N + b^2 N^{1/3}\log N + b^3\log^{3/2}N - \sum\nolimits_{j=1}^{N}\log ( \hat{\gamma}-\lambda_j).
\end{align*}
For the third term, using \cref{Lemma C-},%
\begin{equation*}
\alpha \log \frac{1}{2\pi \im }\int_{\hat{\gamma}-\im \infty }^{\hat{\gamma} + \im \infty }\exp \left\{\frac{N}{\alpha}\left[ G(z)-G(\hat{\gamma}) \right] \right\} \diff z
= -\frac{2\alpha}{3}\log N + O_{\Pr}(\log\log N) .
\end{equation*}%
Combining the three terms, we obtain 
\begin{align*}
2 N F_{\alpha,N}
&= N(-1-\log \beta + 2\beta)  + b^2 N^{1/3}\log N  + b^3\log^{3/2} N
               -\frac{\alpha}{6} \log N \\ 
  &\qquad -\sum_{j=1}^{N}\log (\hat{\gamma}-\lambda_j)
    + O_{\Pr}\left( \log \log N \right).
\end{align*}

Let 
\begin{equation*}
N \xi_N:= \sum_{j=1}^N \log (\hat{\gamma} -\lambda_j) -
\frac{N}{2}
- b^2 N^{1/3} \log N
+ \frac{2}{3} \abs{b}^3 \log^{3/2}N
+\frac{\alpha-1}{6} \log N.
\end{equation*}%
Combining the last two displays and noting that
\[
b^3\frac{\log^{3/2} N}{N}=(\beta-1)^3,
\]
we get (for $b<0$)
\begin{align*}
    2 N F_{\alpha,N}=N\left(
    2\beta-\log\beta-\frac{3}{2}+\frac{1}{3}(\beta-1)^3-\frac{\log N}{6N}\right)-N\xi_N+O_{\Pr}\left(\log\log N\right).
\end{align*}
Using the Taylor expansion
\[
\log\beta=(\beta-1)-\frac{1}{2}(\beta-1)^2+\frac{1}{3}(\beta-1)^3+o(N^{-1})
\]
in the previous display, we obtain
\begin{equation} \label{eq:key-pos}
  2 N F_{\alpha,N}
  = \frac{N}{2}\beta^2-\frac{\log N}{6}- N \xi_N + O_{\Pr}\left( \log \log N \right) .
\end{equation}%

\fix{Up to this point, all arguments in this section are the same for
  case W (i.e. subcritical) and case G$^\omega$ (critical)}.
\fix{Now, in case W,} the sub-critically spiked Wigner setting of theorem \ref{thm:main}, by \cref{CLT1},%
\begin{equation*}
N\xi_{N} \Big/ \sqrt{\tfrac{\alpha}{3}\log N} 
\drightarrow \mathcal{N}(0,1).%
\text{ }
\end{equation*}%
This yields the first convergence of theorem
\ref{thm:negat-crit-regime} and hence the negative critical part of
theorem \ref{thm:main}.
\fix{In Case G$^\omega$ and theorem \ref{thm: triple}, a term $\frac{1}{3} \log N$ must be added to
  $N \xi_N$ to obtain convergence in (\ref{eq:T2.5}), and (after scaling) this
  amounts to subtracting $\log N/6N$ on the left side of (\ref{eq:T3.2}), and
  so to the claimed reversal of sign.} \qed
\end{theorem}

\section{Positive-critical regime}
\label{sec:positive-critical}

\begin{figure}
    \centering
    \begin{tikzpicture}[scale = 1]
    
    \draw[opacity=0.5, ->] (-3, 0) -- (3, 0) node[anchor=north] {$\mathrm{Re}(z)$};
    \draw[opacity=0.5, ->] (0, -3) -- (0, 3) node[anchor=west] {$\mathrm{Im}(z)$};
    
    \node[circle,fill=black,inner sep=0pt,minimum size=3pt,label=below:{$\lambda_1$}] (a) at (2,0) {};
    
    \node[circle,fill=black,inner sep=0pt,minimum size=3pt,label=below:{$\lambda_N$}] (a) at (-2,0) {};
    
    \draw[] (-0.5, 0) node[anchor=north] {$\dots$};
    
    \node[circle,fill=black,inner sep=0pt,minimum size=3pt,label=below:{$\lambda_2$}] (a) at (1,0) {};
    
    \draw[] (1.5, 0) node[anchor=center] {$\times$} node[anchor=south east] {$\mu$};
    
    \draw [arrow] (1.85,-0.1) coordinate (low) arc (-150:150:0.2) coordinate (top) node[midway,above right] {\small\(\mathcal{K}_1\)};
    \draw [arrow] (top) -- (1.5, 0.1) coordinate (topmu) node[near start,above] {\small\(\mathcal{K}_2^+\)};
    \draw [arrow] (1.5, -0.1) coordinate (lowmu) -- (low);
    \draw [arrow] (topmu) -- (1.5, 3) node[midway,left] {\small\(\mathcal{K}_3^+\)};
    \draw[arrow] (1.5, -3) -- (lowmu) node[midway,left] {\small\(\mathcal{K}_3^-\)};

    \end{tikzpicture}
    \caption{Contour of integration for positive \(b\)}
    \label{fig:positive-contour}
\end{figure}
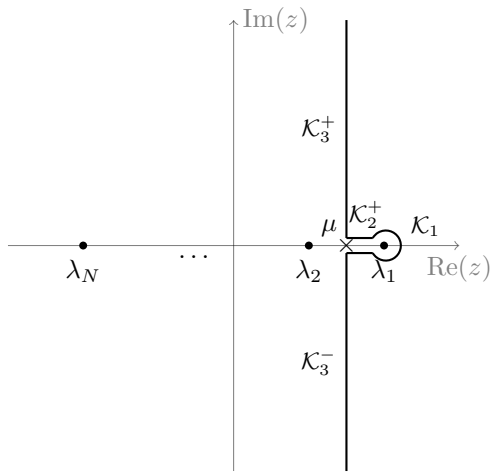

The vertical contour passing through $\hat{\gamma}$ does not work when
$b \geq 0$ because $G'( \hat{\gamma}) $ becomes non-negligible.
As a result, the function $G(z)$  oscillates
quickly along the vertical contour near \(\hat{\gamma}\).
Instead we use contours crossing the real axis closer to
$\lambda_1$.
To this end, we consider the  nonsingular part of $G$ at $\lambda_1$
and define
\begin{equation*}
  \hat{G}(\lambda_1) = \beta \lambda_1 - \frac{1}{N} \sum_{j=2}^N
  \log(\lambda_1 - \lambda_j).
\end{equation*}

\begin{proposition}
  \label{prop:positive-critical}
  \textcolor{black}{Suppose Assumption W or G$^\omega$ holds.} If $b \geq 0$, then for both $\alpha = 1, 2$,
  \begin{equation*}
    \frac{1}{2 \pi \im} \int_{\mathcal{K}} \exp \{(N/\alpha) G(z) \}
    \diff z
    = \exp \left\{ (N/\alpha) \hat{G}(\lambda_1) - \frac{(\alpha-1)}{3}
    \log N + O_{\Pr}(\log \log N) \right\}.
  \end{equation*}
\end{proposition}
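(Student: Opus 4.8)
The plan is to evaluate the contour integral by deforming $\mathcal{K}$ to the keyhole-plus-vertical contour $\mathcal{K}_1 \cup \mathcal{K}_2^{\pm} \cup \mathcal{K}_3^{\pm}$ of \cref{fig:positive-contour}, crossing the real axis at $\mu = (\lambda_1 + \lambda_2)/2$, and to show that the keyhole part $\mathcal{K}_1 \cup \mathcal{K}_2$ dominates while the vertical part $\mathcal{K}_3$ is negligible (in the case $b>0$; the boundary case $b=0$ requires a separate steepest-descent argument, see below). On the keyhole the natural expansion point is $\lambda_1$, and the key observation is that $G(z)$ has a logarithmic singularity there: writing $G(z) = \beta z - \frac1N \log(z - \lambda_1) - \frac1N \sum_{j\geq 2}\log(z-\lambda_j)$, the first two regular-in-a-neighbourhood pieces combine to $\beta z - \frac1N\sum_{j\geq 2}\log(z-\lambda_j) = \hat G(z)$, which is analytic near $\lambda_1$, while the remaining $-\frac1N\log(z-\lambda_1)$ produces, on integration around the keyhole, a factor that I expect to scale like a Gamma-function integral $\int e^{-(N/\alpha)\hat G'(\lambda_1)\,s} s^{-1/\alpha}\,ds$ after the change of variables $z = \lambda_1 + s$. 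This is exactly where the exponent $-\frac{\alpha-1}{3}\log N$ and the $O_{\Pr}(\log\log N)$ error will come from, since by \cref{lemma lambda1statistic} (eq. \eqref{inv mom 1}) we have $\hat G'(\lambda_1) = \beta - \frac1N\sum_{j\geq2}(\lambda_1-\lambda_j)^{-1} = (\beta - 1) + O_{\Pr}(N^{-1/3}) = b N^{-1/3}\sqrt{\log N}(1+o_{\Pr}(1))$, so the effective Laplace scale is of order $N^{2/3}/\sqrt{\log N}$, and the Gamma integral over $s$ of order $N^{-2/3}\sqrt{\log N}$ contributes a power of $N$ together with $\log$-corrections; the precise bookkeeping gives $N^{-(\alpha-1)/3}$ up to $\log\log N$.

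The steps, in order, are: (i) deform $\mathcal{K}$ to the contour of \cref{fig:positive-contour}, justified by Cauchy's theorem and the arc-decay estimate already recorded in the excerpt, noting that a.a.s.\ $\lambda_1 - \lambda_2 = \Theta_{\Pr}(N^{-2/3})$ by \cref{lemma three points}(iv) so that $\mu$ separates $\lambda_1$ from the rest of the spectrum by a macroscopic-in-microscopic-units gap; (ii) on $\mathcal{K}_1 \cup \mathcal{K}_2$, substitute $z = \lambda_1 + s$ with $s$ ranging over a small keyhole of radius comparable to $\lambda_1 - \lambda_2$, Taylor expand $\hat G(\lambda_1 + s) = \hat G(\lambda_1) + \hat G'(\lambda_1) s + \tfrac12 \hat G''(\lambda_1)s^2 + \cdots$, and control the quadratic-and-higher remainder using $\hat G''(\lambda_1) = \frac1N\sum_{j\geq2}(\lambda_1-\lambda_j)^{-2} = O_{\Pr}(N^{1/3})$ from \eqref{inv mom 1}, so that $(N/\alpha)\hat G''(\lambda_1) s^2 = O_{\Pr}(N^{4/3} s^2)$ stays bounded for $|s| \lesssim N^{-2/3}$ up to a benign logarithmic window; (iii) evaluate the resulting model integral $\frac{1}{2\pi\im}\oint e^{-(N/\alpha)\hat G'(\lambda_1)s}\, s^{-1/\alpha}\,ds$ — for $\alpha = 1$ this is a closed loop around a simple pole giving a residue, consistent with the $-\frac{\alpha-1}{3}\log N = 0$ term vanishing; for $\alpha = 2$ the $s^{-1/2}$ branch point gives a Hankel-contour representation of $1/\Gamma(1/2)$ times $[(N/\alpha)\hat G'(\lambda_1)]^{-1/2}$, and inserting $\hat G'(\lambda_1) \asymp N^{-1/3}\sqrt{\log N}$ produces $N^{-1/6}\cdot(\log N)^{-1/4}$, i.e.\ $N^{-(\alpha-1)/3}$ times $\log$-factors absorbed into $O_{\Pr}(\log\log N)$; (iv) bound the vertical contour $\mathcal{K}_3^{\pm}$: on it $\Re(z) = \mu$ and one shows $\Re G(z) \leq G(\mu) < \hat G(\lambda_1) - (\text{gap})$, using that moving from $\lambda_1$ to $\mu$ along the real axis decreases $\hat G$ by an amount of order $\hat G'(\lambda_1)(\lambda_1 - \mu) \asymp N^{-1/3}\sqrt{\log N}\cdot N^{-2/3} = N^{-1}\sqrt{\log N}$, hence $(N/\alpha)$ times this is $\asymp \sqrt{\log N} \to \infty$, making the vertical contribution exponentially smaller than the keyhole one; (v) assemble the pieces and take logarithms to read off the claimed formula.

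The main obstacle is step (iv) combined with the boundary case $b = 0$, where the gap between the keyhole and vertical contributions degenerates: when $b=0$ one has $\hat G'(\lambda_1) = O_{\Pr}(N^{-1/3})$ with no $\sqrt{\log N}$ boost (since $\beta - 1 = 0$ and the correction from \eqref{inv mom 1} is only $O_{\Pr}(N^{-1/3})$), so the Laplace scale on the keyhole and the decay rate on the vertical contour become comparable, and the clean domination argument fails. As flagged in the proof-strategy subsection, the remedy is to replace the fixed contour of \cref{fig:positive-contour} by the contour of steepest descent through the actual (possibly complex, possibly random) critical point of $G$, and to establish matching upper and lower bounds on $|\int e^{(N/\alpha)G}|$ rather than an asymptotic equality; one still recovers the right-hand side of the proposition because the residual/Hankel contribution at $\lambda_1$ has the same order. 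A secondary technical point is ensuring the keyhole radius can be chosen uniformly (a.a.s.) larger than the microscopic spacing $\lambda_1-\lambda_2$ yet small enough that the cubic term $\hat G'''$ in the Taylor expansion — which by a third-moment analogue of \eqref{inv mom 1} is $O_{\Pr}(N)$ — does not spoil the quadratic control; this is handled by taking the radius of order $N^{-2/3}\log^{c}N$ for suitable small $c$ and checking that the tail of the keyhole integral outside a $O(N^{-2/3})$-neighbourhood of $\lambda_1$ is negligible, exactly as the tail estimates were carried out in the proof of \cref{Lemma C-}.
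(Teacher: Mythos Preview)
Your overall strategy—keyhole contour for $b>0$, residue/Hankel evaluation on $\mathcal{K}_1\cup\mathcal{K}_2$, separate treatment of the vertical piece, and steepest descent for $b=0$—is exactly what the paper does. Steps (i)–(iii) and (v) are correct in outline (modulo the arithmetic slip $N^{-1/6}$ in (iii), which should be $N^{-1/3}$; you immediately state the right answer $N^{-(\alpha-1)/3}$ anyway).

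Step (iv), however, contains a genuine error. Your claim $\Re G(\mu) < \hat G(\lambda_1) - (\text{gap})$ is false. You compare $\hat G(\mu)$ with $\hat G(\lambda_1)$, but the integrand on $\mathcal{K}_3$ involves $G$, not $\hat G$, and the difference is the singular term $-\frac{1}{N}\log(z-\lambda_1)$. At $z=\mu$ this contributes $-\frac{1}{N}\log|\mu-\lambda_1| \asymp \frac{2}{3N}\log N$ to $\Re G(\mu)$, which \emph{dominates} your gap $\hat G'(\lambda_1)(\lambda_1-\mu)\asymp N^{-1}\sqrt{\log N}$. In fact the paper computes
\[
N[\hat G(\lambda_1)-\Re G(\mu)] = -\tfrac{2}{3}\log N + \theta_N b\sqrt{\log N} + O_{\Pr}(1),
\]
so after multiplying by $N/\alpha$ the pointwise value of the integrand at $\mu$ \emph{exceeds} $A_{N\alpha}$ by a factor of $N^{2/3}$ (up to the $e^{-\theta_N b\sqrt{\log N}/\alpha}$ you correctly identify). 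What saves the day is that the integral $K_N=\int_0^\infty \exp\{(N/\alpha)\Re[G(\mu+\im t)-G(\mu)]\}\,\diff t$ is only $O_{\Pr}(N^{-2/3})$: the first few $|\mu-\lambda_j|^{-2}$ terms in $\Re\tilde G(t)$ force decay on the scale $t\sim N^{-2/3}$, and this $N^{-2/3}$ exactly cancels the blow-up from the singular log term. Only after combining $|J_N|$ with $K_N$ do you get $|I_{N3}|\leq A_{N\alpha}\exp\{-\theta_N b\sqrt{\log N}/\alpha + O_{\Pr}(1)\}$, which is $o_{\Pr}(I_{N\alpha})$ for $b>0$ and merely $O_{\Pr}(I_{N\alpha})$ for $b=0$—confirming your diagnosis that the $b=0$ case needs the separate steepest-descent lower bound.
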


For $b > 0$, we consider the vertical \textquotedblleft keyhole
contour\textquotedblright\ $\mathcal{K}$,  \cref{fig:positive-contour},
which is symmetric around the real
axis and has the following form above the axis:%
\begin{equation*}
\mathcal{K}^{+}=\mathcal{K}_{1}^{+}\cup \mathcal{K}_{2}^{+}\cup \mathcal{K}%
_{3}^{+}
\end{equation*}%
with $\mathcal{K}_{1}^{+}$ being a semi-circle with center at $\lambda _{1}$
and small radius $\varepsilon ,$ $\mathcal{K}_{2}^{+}$ being a horizontal
segment connecting $\mu =\frac{\lambda_{1} + \lambda_{2}}{2}$ and $\lambda
_{1}-\varepsilon ,$ and $\mathcal{K}_{3}^{+}$ being a vertical ray starting
from $\mu$.

In the $\alpha = 1$ case,  the integrand is analytic away from \(\lambda_1,
\dotsc, \lambda_N\), and so the contributions of \(\mathcal{K}_2^+\)
and \(\mathcal{K}_2^-\) cancel.
On the other hand, for $\alpha = 2$, 
$\exp \left\{ (N/\alpha)G(z)\right\} $ has a square-root-type singularity at $z=\lambda _{1}.$
Hence, the contribution of $\mathcal{K}_{1}$ to the integral
$\dint\nolimits_{\mathcal{K}}\exp \left\{ (N/\alpha)G(z)\right\}
\mathrm{d}z$ converges to zero as $\varepsilon \rightarrow 0$.
To summarize, let
\begin{equation*}
  I_{N} = \frac{1}{2\pi\im}\int_{\mathcal{K}} \exp \{(N/\alpha) G(z) \} \diff z,
  \qquad
  I_{Nk} = \frac{1}{2\pi\im}\int_{\mathcal{K}_k} \exp \{(N/\alpha) G(z) \} \diff z,
\end{equation*}
Thus, as $\epsilon \to 0$ we have for both $\alpha = 1, 2$ 
\begin{equation*}
  I_N = I_{N\alpha} + I_{N3}.
\end{equation*}
Let
\begin{equation*}
  A_{N \alpha} = \exp \{ (N/\alpha) \hat{G}(\lambda_1) -
\frac{\alpha-1}{3} \log N \}.
\end{equation*}
When $b > 0$, we establish  \cref{prop:positive-critical}
in \cref{sec:proof-proposition-b} by
showing that 
\begin{equation*}
  I_{N \alpha} = A_{N\alpha} \exp \{ O_{\Pr}(\log \log N) \}, \qquad
  I_{N 3} = o_{\Pr}(I_{N\alpha}).
\end{equation*}

The $b = 0$ case is more delicate.
The keyhole contour yields both $I_{N\alpha}, I_{N3} = A_{N\alpha}
\exp \{ O_{\Pr}(1) \}$ which suffices for the upper bound for $I_N$.
Since the $O_{\Pr}(1)$ terms are in general complex, some cancellation
between $I_{N\alpha}$ and $I_{N3}$ cannot be excluded, so further
argument is needed for the lower bound.
In \cref{sec:case-b-=} a separate argument using the steepest descent
contour yields the required lower bound.

\Cref{sec:limit-law-posit} completes the proof of the
positive-critical regime of 
 \cref{thm:main}.

\subsection{Proof of \cref{prop:positive-critical} for \texorpdfstring{$b > 0$}{b > 0}}
\label{sec:proof-proposition-b}

\begin{lemma}
\label{lem:edge-contours}
Suppose \textcolor{black}{Assumption W or G$^\omega$ holds and } $b\geq0$. Then for $\alpha=1,2$ we have
\[
I_{N\alpha}=A_{N\alpha}\exp\{O_{\Pr}(\log\log N)\}.
\]
\end{lemma}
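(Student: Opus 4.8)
The plan is to evaluate $I_{N\alpha} = \frac{1}{2\pi\im}\int_{\mathcal{K}_\alpha}\exp\{(N/\alpha)G(z)\}\,\diff z$ by a Laplace/Watson-type analysis along the keyhole piece $\mathcal{K}_1\cup\mathcal{K}_2^\pm$ surrounding $\lambda_1$. First I would factor out the singular part: write $G(z) = -\frac1N\log(z-\lambda_1) + \hat G(z)$ where $\hat G(z) = \beta z - \frac1N\sum_{j\geq 2}\log(z-\lambda_j)$ is analytic in a neighbourhood of $\lambda_1$ of radius $\asymp(\lambda_1-\lambda_2) = \Theta_{\Pr}(N^{-2/3})$ (Lemma~\ref{lemma three points}(iv)). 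Then $\exp\{(N/\alpha)G(z)\} = (z-\lambda_1)^{-1/\alpha}\exp\{(N/\alpha)\hat G(z)\}$, and $A_{N\alpha}=\exp\{(N/\alpha)\hat G(\lambda_1)-\tfrac{\alpha-1}{3}\log N\}$ is exactly the expected leading order with the $(\alpha-1)/3$ term coming from the $\Gamma$-function / $\varepsilon\to0$ normalisation in the $\alpha=2$ square-root case (for $\alpha=1$ the contour integral of $(z-\lambda_1)^{-1}$ around $\lambda_1$ just picks up a residue).

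Next I would substitute $z = \lambda_1 + s$ with $s$ real and negative on $\mathcal{K}_2^-$ (mirror on $\mathcal{K}_2^+$), rescale $s = -u\cdot r_N$ for the natural scale $r_N$ determined by the derivatives of $\hat G$ at $\lambda_1$, and Taylor-expand $\hat G(\lambda_1+s) - \hat G(\lambda_1) = \hat G'(\lambda_1)s + \tfrac12\hat G''(\lambda_1)s^2 + \dots$. Here Lemma~\ref{lemma lambda1statistic} is the crucial input: $\hat G'(\lambda_1) = \beta - \frac1N\sum_{j\geq2}(\lambda_1-\lambda_j)^{-1} = (\beta-1) + O_{\Pr}(N^{-1/3}) = b_+N^{-1/3}\sqrt{\log N}(1+o_{\Pr}(1))$ for $b>0$, and $\hat G''(\lambda_1) = -\frac1N\sum_{j\geq2}(\lambda_1-\lambda_j)^{-2} = O_{\Pr}(N^{1/3})$. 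So the linear term dominates on the relevant scale; the exponent behaves like $-(N/\alpha)\hat G'(\lambda_1)|s|$, giving an effective integration length $|s|\asymp 1/(N\hat G'(\lambda_1)) \asymp N^{-2/3}/(b\sqrt{\log N})$, which is comfortably smaller than the radius of analyticity $\Theta_{\Pr}(N^{-2/3})$, so the keyhole stays inside the analytic region a.a.s. Carrying out the $u$-integral of $u^{-1/\alpha}e^{-cu}$ type against the singular factor produces a $\Gamma$-function constant and a power of $N\hat G'(\lambda_1)$; collecting the powers of $N$ and $\log N$ and the $O_{\Pr}(1)$ fluctuations of $\hat G'(\lambda_1)$ (which enter logarithmically, i.e.\ as $\log(b\sqrt{\log N}) = O(\log\log N)$) yields the stated $\exp\{O_{\Pr}(\log\log N)\}$ multiplicative error. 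One must also check that the contribution of the arc $\mathcal{K}_1$ of radius $\varepsilon$ vanishes as $\varepsilon\to0$ (automatic for $\alpha=2$; handled by residue for $\alpha=1$) and that the part of $\mathcal{K}_2$ far from $\lambda_1$ (where $|s|$ is of order the full distance to $\mu$) is exponentially negligible, using monotonicity of $\mathrm{Re}\,\hat G$ along the segment together with the lower bound $\hat G'(\lambda_1)\gtrsim N^{-1/3}\sqrt{\log N}$.

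The main obstacle I anticipate is the boundary regime $b\downarrow 0$ within the statement's hypothesis $b\geq 0$: when $b=0$ the linear term $\hat G'(\lambda_1)$ is only $O_{\Pr}(N^{-1/3})$ (no $\sqrt{\log N}$ enhancement), so it no longer cleanly dominates the quadratic term $\hat G''(\lambda_1)s^2\asymp N^{1/3}s^2$ on the scale $|s|\asymp N^{-2/3}$ — both are $O_{\Pr}(N^{-1})$ — and the naive Laplace heuristic breaks down. The fix is to not seek an asymptotic equivalent but only the two-sided bound $I_{N\alpha}=A_{N\alpha}\exp\{O_{\Pr}(1)\}$ claimed in the $b=0$ discussion: upper bound by replacing $\mathrm{Re}\,\hat G(\lambda_1+s)$ by $\hat G(\lambda_1) + O_{\Pr}(1)\cdot(\text{something})$ via the crude derivative bounds and bounding $\int|s|^{-1/\alpha}\,\diff|s|$ over the effective window; lower bound by restricting to a window of size $\asymp N^{-2/3}/\log N$ on which all of $\hat G'(\lambda_1)s$, $\hat G''(\lambda_1)s^2$ are $O_{\Pr}(\log^{-1}N)=o_{\Pr}(1)$ after multiplying by $N$, so $\exp\{(N/\alpha)(\hat G(\lambda_1+s)-\hat G(\lambda_1))\}$ is bounded below by a constant there. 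Since the lemma only asserts the multiplicative error $\exp\{O_{\Pr}(\log\log N)\}$ (which absorbs $O_{\Pr}(1)$), treating $b>0$ and $b=0$ uniformly by always working with two-sided bounds rather than sharp asymptotics is the cleanest route, and the $b=0$ case is where one has to be most careful that the chosen window simultaneously kills the linear, quadratic, and singular-integral contributions at the right order.
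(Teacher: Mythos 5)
Your proposal is correct and follows essentially the same path as the paper: Cauchy's residue formula dispatches $\alpha=1$; for $\alpha=2$ the paper also reduces to a one-sided real integral off $\lambda_1$, Taylor-expands $-\log(1-x/(\lambda_1-\lambda_j))$ with a Lagrange remainder, invokes \cref{lemma lambda1statistic} to control $\hat G'$ and $\hat G''$, and rescales $y=N^{2/3}x$. One small organizational remark: you need not distinguish ``effective Laplace window'' from ``far part of $\mathcal{K}_2$''---the whole horizontal segment has length $(\lambda_1-\lambda_2)/2=\Theta_{\Pr}(N^{-2/3})$, so after rescaling the integration runs over $[0,\theta_N]$ with $\theta_N=\Theta_{\Pr}(1)$, and on that compact random interval the non-dominant parts of the exponent ($\omega_{1N}y+\xi\omega_{2N}^+y^2$) are uniformly $O_{\Pr}(1)$; this gives the two-sided $e^{O_{\Pr}(1)}$ factor for all $b\ge 0$ at once, with the $b>0$ versus $b=0$ split appearing only in the final explicit evaluation of $\int_0^{\theta_N}e^{-\frac12 by\sqrt{\log N}}\,\diff y/\sqrt{y}$. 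Your instinct that a sharp Laplace asymptotic is unnecessary and that the $b=0$ case should be handled by the same two-sided bound is exactly what the paper does.
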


\begin{proof}
  In the complex case, Cauchy's integral formula yields
  $I_{N1} = \exp \{ N \hat{G}(\lambda_1) \}=: A_{N1}$,
since \(\mathcal{K}_1\) encircles only \(\lambda_1\).
The rest of this proof is devoted to the real case.
First, consider
\begin{align*}
    \frac{1}{2\pi \im} \int_{\mathcal{K}_2^+ \cup \mathcal{K}_2^-} \exp\{(N/2) G(z)\} \diff z
    &=\frac{1}{2\pi\im}\int_{\lambda_1}^{\mu}\frac{-\im}{\sqrt{\lambda_1-y}}\exp\left\{\frac{N\beta y}{2}-\frac{1}{2}\sum_{j=2}^{N}\log (y-\lambda_j)\right\}\diff y\\&+\frac{1}{2\pi\im}\int_{\mu}^{\lambda_1}\frac{\im}{\sqrt{\lambda_1-y}}\exp\left\{\frac{N\beta y}{2}-\frac{1}{2}\sum_{j=2}^{N}\log (y-\lambda_j)\right\}\diff y.
\end{align*}
Changing variables $y \mapsto x=\lambda_1-y$, we obtain
\begin{align*}
    \frac{1}{2\pi \im} \int_{\mathcal{K}_2^+ \cup \mathcal{K}_2^-} \exp\{(N/2) G(z)\} \diff z
    &=\frac{1}{\pi}\int_{0}^{\frac{\lambda_1-\lambda_2}{2}}\frac{1}{\sqrt{x}}\exp\left\{\frac{N\beta (\lambda_1-x)}{2}-\frac{1}{2}\sum_{j=2}^{N}\log (\lambda_1-\lambda_j-x)\right\}\diff x\\
    &=\exp\left\{(N/2)\hat{G}(\lambda_1)\right\}\mathcal{I},
\end{align*}
where
\begin{equation*}
\mathcal{I}=\frac{1}{\pi }\dint\nolimits_{0}^{\frac{\lambda _{1}-\lambda
_{2}}{2}}\frac{1}{\sqrt{x}}\exp \left\{ -\frac{N}{2}\beta x-\frac{1}{2}%
\sum_{j=2}^{N}\log \left( 1-\frac{x}{\lambda _{1}-\lambda _{j}}%
\right) \right\} \mathrm{d}x.
\end{equation*}%

Since $0 \leq -\log(1-y) -y\leq y^2$ for $0 \leq y \leq \frac{1}{2}$,
we have for some $\xi \in [0,1]$,
\begin{align*}
- N\beta x - \sum_{j=2}^N\log \left( 1-\frac{x}{\lambda _{1}-\lambda
  _{j}}\right)
  & = - N \beta x +  \sum_{j=2}^N\frac{x}{\lambda _{1}-\lambda _{j}}
    + \xi \sum_{j=2}^N\frac{x^{2}}{\left( \lambda _{1}-\lambda
    _{j}\right) ^{2}}  \\
  & = N^{2/3} x ( - b \sqrt{\log N}  +  \omega_{1N})  + \xi
    N^{4/3} x^2 \omega_{2N}^+ ,
\end{align*}
with random variables $\omega_{1N}$ and $\omega_{2N}^+ > 0$ both being
$O_{\Pr }(1)$ from \cref{lemma lambda1statistic}.
Setting $y = N^{2/3}x$ and $\theta_N = N^{2/3}(\lambda_1 - \lambda_2)/2 =
\Theta_{\Pr}(1)$ (\fix{by \cref{lemma three points}(iv)}) and noting also that
$\omega_{1N}y+\xi \omega_{2N}^+y^2$ is uniformly $O_{\Pr}(1)$ for $0
\leq y \leq \theta_N$, we arrive at
\begin{equation*}
  \mathcal{I}
    = \frac{e^{O_{\Pr}(1)}}{N^{1/3}} \int_0^{\theta_N} \exp\{ -\frac{1}{2}
    b y \sqrt{\log N} \} \frac{\diff y}{\sqrt y}
      =
  \begin{cases}
    \dfrac{e^{O_{\Pr}(1)}}{N^{1/3}} \dfrac{1}{b^{1/2} \log^{1/4} N} & b
    > 0 \\
    \dfrac{e^{O_{\Pr}(1)}}{N^{1/3}}  & b  = 0.
  \end{cases}   \qedhere
\end{equation*}
\end{proof}

In what follows, we define $ G(\mu) = \lim_{t \rightarrow +0} G(\mu + \im t)$, i.e., as a continuation from the upper-half plane, so that we have $\log( \lambda_1 - \mu) = \log|\lambda_1 - \mu| + \pi\im $.

\begin{lemma}
  \textcolor{black}{Suppose Assumption W or G$^\omega$ holds.} For $b \geq 0$, we have $$|I_{N3}| \leq A_{N \alpha} \exp \left\{ -
  \theta_N \frac{b \sqrt{\log N}}{\alpha} + O_{\Pr}(1) \right\},$$
  where $\theta_N=N^{2/3}(\lambda_1-\lambda_2)/2$ is a non-negative $\Theta_{\Pr}(1)$ variable.
\end{lemma}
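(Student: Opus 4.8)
The plan is to bound $|I_{N3}|$ by passing to moduli and using the conjugate symmetry of the spectrum. Parametrising the two rays of $\mathcal{K}_3 = \mathcal{K}_3^+\cup\mathcal{K}_3^-$ by $z=\mu\pm\im t$, $t\ge 0$, and noting $\Re G(\mu+\im t)=\Re G(\mu-\im t)$ since the $\lambda_j$ are real, we get
\[
  |I_{N3}| \le \frac{1}{\pi}\int_0^\infty \exp\left\{\tfrac{N}{\alpha}\Re G(\mu+\im t)\right\}\diff t,
  \qquad
  \Re G(\mu+\im t)=\beta\mu-\tfrac{1}{2N}\sum_{j=1}^N\log\bigl((\mu-\lambda_j)^2+t^2\bigr).
\]
I then factor out $\exp\{(N/\alpha)\hat G(\lambda_1)\}$ and split $\Re G(\mu+\im t)-\hat G(\lambda_1)$ into the $t$-independent term $\Re G(\mu)-\hat G(\lambda_1)$ and the ``width'' term $\Re G(\mu+\im t)-\Re G(\mu)=-\tfrac{1}{2N}\sum_j\log\bigl(1+t^2/(\mu-\lambda_j)^2\bigr)\le 0$, so that $|I_{N3}|\le \tfrac1\pi\exp\{(N/\alpha)\hat G(\lambda_1)+(N/\alpha)[\Re G(\mu)-\hat G(\lambda_1)]\}\cdot W$ with $W:=\int_0^\infty\exp\{(N/\alpha)[\Re G(\mu+\im t)-\Re G(\mu)]\}\diff t$.

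For the $t$-independent term, set $d_1:=\lambda_1-\mu=\tfrac12(\lambda_1-\lambda_2)=\theta_N N^{-2/3}$ and $e_j:=\lambda_1-\lambda_j$ for $j\ge 2$; the placement of $\mu$ at the midpoint gives $\mu-\lambda_j=e_j-d_1\in[d_1,e_j]$ and $d_1/e_j\le \tfrac12$, so $\Re G(\mu)-\hat G(\lambda_1)=-\beta d_1-\tfrac1N\log d_1-\tfrac1N\sum_{j\ge2}\log(1-d_1/e_j)$. Applying $-\log(1-y)\le y+y^2$ on $[0,\tfrac12]$ together with $\tfrac1N\sum_{j\ge2}e_j^{-1}=1+O_{\Pr}(N^{-1/3})$ and $\tfrac1N\sum_{j\ge2}e_j^{-2}=O_{\Pr}(N^{1/3})$ from \cref{lemma lambda1statistic}, and using $d_1=\Theta_{\Pr}(N^{-2/3})$, this is at most $-(\beta-1)d_1+\tfrac{2}{3N}\log N+O_{\Pr}(N^{-1})$. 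Multiplying by $N/\alpha$ and using $N(\beta-1)d_1=b\,\theta_N\sqrt{\log N}$ gives $\tfrac{N}{\alpha}[\Re G(\mu)-\hat G(\lambda_1)]\le -\theta_N\tfrac{b\sqrt{\log N}}{\alpha}+\tfrac{2}{3\alpha}\log N+O_{\Pr}(1)$.

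It remains to show $W=O_{\Pr}(N^{-2/3})$, i.e.\ $\log W\le-\tfrac23\log N+O_{\Pr}(1)$; then the surplus $\tfrac{2}{3\alpha}\log N-\tfrac23\log N=-\tfrac{2(\alpha-1)}{3\alpha}\log N$ is absorbed exactly into the $-\tfrac{\alpha-1}{3}\log N$ carried by $A_{N\alpha}$ (both coefficients vanish for $\alpha=1$ and agree for $\alpha=2$), and the lemma follows. Since each summand $\log(1+t^2/(\mu-\lambda_j)^2)$ is non-negative, one may discard all $j$ outside a set $S$ of $\alpha+1$ near-edge indices and bound the integrand by $\prod_{j\in S}(1+t^2/(\mu-\lambda_j)^2)^{-1/(2\alpha)}$. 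For $\alpha=1$ take $S=\{1,2\}$: the integrand is $(1+t^2/d_1^2)^{-1}$ and $W\le\tfrac\pi2 d_1=O_{\Pr}(N^{-2/3})$. For $\alpha=2$ take $S=\{1,2,3\}$; by parts (i) and (iv) of \cref{lemma three points} we have a.a.s.\ $\lambda_3\le\lambda_2<\mu$ with $d_1\le\mu-\lambda_3=O_{\Pr}(N^{-2/3})$, so the integrand is $(1+t^2/d_1^2)^{-1/2}(1+t^2/(\mu-\lambda_3)^2)^{-1/4}$, whose integral over $[0,\infty)$ is $\lesssim d_1(1+\log((\mu-\lambda_3)/d_1))=O_{\Pr}(N^{-2/3})$. (Alternatively, \cref{lemma three points}(v) yields $W=O_{\Pr}(N^{-2/3})$ after discretising $t$ into dyadic scales and using monotonicity of $t\mapsto\Re G(\mu+\im t)$.)

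Assembling the three pieces gives $|I_{N3}|\le A_{N\alpha}\exp\{-\theta_N b\sqrt{\log N}/\alpha+O_{\Pr}(1)\}$. The main obstacle is the last estimate on $W$: the crude two-eigenvalue bound loses a stray factor $\log N$ when $\alpha=2$, so one genuinely needs control of the third-largest eigenvalue (or \cref{lemma three points}(v)) for the $\log N$ contributions to cancel precisely against those implicit in $A_{N\alpha}$; the rest is a routine contour-and-Taylor computation.
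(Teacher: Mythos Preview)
Your proof is correct and follows essentially the same route as the paper's: both factor out $\exp\{(N/\alpha)\hat G(\lambda_1)\}$, estimate $\Re G(\mu)-\hat G(\lambda_1)$ via Taylor expansion and \cref{lemma lambda1statistic}, and bound the width integral $W$ (the paper's $K_N$) by keeping only a finite number $k>\alpha$ of near-edge eigenvalues. The only cosmetic difference is that the paper handles $W$ uniformly in $\alpha$ by setting $\zeta=\max_{j\le k}N^{2/3}|\lambda_j-\mu|$ and bounding $K_N\le \zeta N^{-2/3}\int_0^\infty(1+s^2)^{-k/(2\alpha)}\,\diff s$, whereas you split into cases $\alpha=1,2$; your observation that a third eigenvalue is genuinely needed when $\alpha=2$ is exactly the paper's requirement $k>\alpha$.
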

\begin{proof}
  It suffices to bound 
  \begin{equation*}
      I_{N3}^+ = \frac{1}{2 \pi \im} \int_{\mathcal{K}_3^+} \exp
      \{(N/\alpha) G(z) \} \diff z
      =  \frac{1}{2 \pi} \int_0^\infty \exp
      \{(N/\alpha) G(\mu + \im t) \} \diff z,
  \end{equation*}
as the analysis for $\mathcal{K}_3^-$ is analogous using
$ G(\bar{z}) = \overline{G(z)}$.
Let $\tilde{G}(t)=G\left( \mu +\im t\right) - G(\mu).$ 
We have
\begin{equation}
  \label{eq:IN3pl}
  |I_{N3}^+|
     \leq \frac{1}{2 \pi} \exp \{(N/\alpha) \hat{G}(\lambda_1) \} |J_N|
      K_N,
\end{equation}
with
\begin{equation*}
  J_N  = \exp \{ -(N/\alpha)[\hat{G}(\lambda_1)-G(\mu)] \},
  \qquad 
  K_N  = \int_0^\infty \exp \{ (N/\alpha) \Re [\tilde{G}(t)] \} \diff t.  
\end{equation*}

First we compare $\hat{G}(\lambda_1)$ and $G(\mu)$.
Since $\log( \mu + \im 0 - \lambda_1) = \log[(\lambda_1 -
\lambda_2)/2] + \im \pi$, we have 

\begin{equation*}
  N[\hat{G}(\lambda_1) - G(\mu)]
  = N\beta (\lambda_1 - \mu) + \log \frac{\lambda_1 - \lambda_2}{2}
  + \im\pi + \sum_{j = 2}^{N} \log\left( 1 - \frac{1}{2} \frac{\lambda_1 - \lambda_2}{\lambda_1 - \lambda_j} \right).
\end{equation*}
For $ 0 \leq t \leq 1$ we have that $ |\log (1 - t/2) + t/2| \leq t^2
$.
From \cref{lemma lambda1statistic} we then have
\begin{equation*}
  \sum_{j = 2}^{N} \log\left( 1 - \frac{1}{2} \frac{\lambda_1 -
      \lambda_2}{\lambda_1 - \lambda_j} \right)
  = -\frac{N(\lambda_1-\lambda_2)}{2} [1+O_{\Pr}(N^{-1/3})]
       +N (\lambda_1-\lambda_2)^2 O_{\Pr}(N^{1/3}).
\end{equation*}
In addition, by \cref{lemma three points}(iv) $ \theta_N$ is a
$\Theta_{\Pr}(1)$ variable, and so
$\log(\lambda_1 - \lambda_2) =  -\frac{2}{3} \log N + O_{\Pr}(1)$,
and
\begin{align*}
  N[\hat{G}(\lambda_1) - G(\mu)]
  &= \frac{N \beta}{2} (\lambda_1 - \lambda_2) - \frac{2}{3} \log N - \frac{N}{2} (\lambda_1 - \lambda_2) + O_{\Pr}(1) \\
    &=
    \frac{N(\lambda_1 - \lambda_2)}{2} \frac{b\sqrt{\log N}}{N^{1/3}} - \frac{2}{3} \log N + O_{\Pr}(1) \\
    &=
    -\frac{2}{3} \log N + \theta_N {b} \sqrt{\log N} +
      O_{\Pr}(1). \numberthis{eq:b-lower-bound} 
\end{align*}
Thus
\begin{equation}
  \label{eq:JN}
  |J_N| = \exp \{ (2/3\alpha) \log N - \theta_N \frac{b \sqrt{\log N}}{\alpha} +
  O_{\Pr}(1) \}.
\end{equation}

We turn to $K_N$ in \eqref{eq:IN3pl}. Fix $k > \alpha$ and let
$\textcolor{black}{\zeta} = \max_{j \leq k} N^{2/3}|\lambda_j - \mu|$.
Neglecting negative terms, we have
\begin{equation*}
  N \Re \tilde{G}(t)
   = -\frac{1}{2}\sum_{j=1}^{N}\log \bigg( 1+\frac{t^{2}}{\left( \mu
         -\lambda_{j} \right)^{2}}\bigg)
   \leq - \frac{k}{2} \log \left( 1 + \frac{t^2}{\zeta^2 N^{-4/3}} \right) \, .
 \end{equation*}
Since $\zeta$ is a non-negative $\Theta_{\Pr}(1)$ variable from 
 \cref{lemma three points} (iv), we have
 \begin{align}
   \label{eq:KN}
   K_N & \leq
     \int_{0}^{\infty} \left(1 + \zeta^{-2} N^{4/3} t^2
     \right)^{-k/2\alpha} \diff t \\
     &  = \zeta N^{-2/3} \int_0^\infty (1+s^2)^{-k/2\alpha} \diff s 
     = \exp \left\{-\frac{2}{3}\log N+O_{\Pr}(1)\right\}.\notag
 \end{align}

Combining \eqref{eq:JN} and \eqref{eq:KN}, for each case $\alpha = 1,
2$ we arrive at
\begin{equation*}
  |J_N| K_N \leq
     \exp \{ - \frac{(\alpha-1)}{3} \log N - \theta_N \frac{b \sqrt{\log N}}{\alpha} +
     O_{\Pr}(1) \}. 
\end{equation*}
Together with \eqref{eq:IN3pl}, this yields the lemma.
\end{proof}

\subsection{The case \texorpdfstring{$b = 0$}{b = 0}}
\label{sec:case-b-=}

In this section we use the steepest descent contour to show that
\[
I_N \geq A_{N \alpha} \exp \{ O_{\Pr}(\log \log N) \}.
\]
When combined with the upper bound already established, this yields
\cref{prop:positive-critical} for $b = 0$.

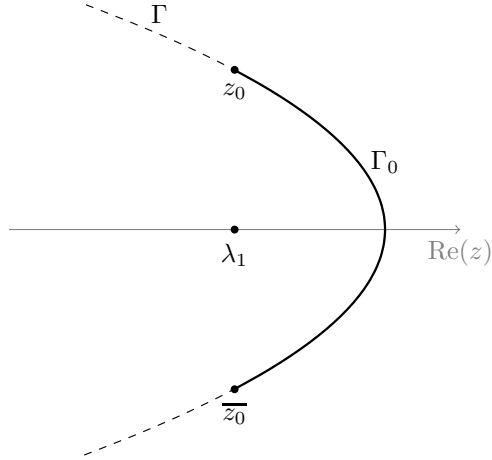
\begin{figure}
    \centering
    \begin{tikzpicture}[scale = 1]
    
    \draw[opacity=0.5, ->] (-3, 0) -- (3, 0) node[anchor=north] {$\mathrm{Re}(z)$};
    
    \draw[black, dashed, line width = 0.15mm]   plot[smooth,domain=-2:2] ({2 - \x*\x}, 1.5 * \x);

    \node[label={$\Gamma$}] (a) at (-1,2.5) {};
    \node[label={$\Gamma_0$}] (a) at (2,0.5) {};
    
    \draw[black, line width = 0.30mm]   plot[smooth,domain=-1.414:1.414] ({2 - \x*\x}, 1.5 * \x);
    
    \node[circle,fill=black,inner sep=0pt,minimum size=3pt,label=below:{$\lambda_1$}] (a) at (0,0) {};
    \node[circle,fill=black,inner sep=0pt,minimum size=3pt,label=below:{$z_0$}] (a) at (0,2.125) {}; 
    \node[circle,fill=black,inner sep=0pt,minimum size=3pt,label=below:{$\overline{z_0}$}] (a) at (0,-2.125) {}; 
    
    \end{tikzpicture}
    \caption{Curve of steepest descent of \(G(z)\) near \(\lambda_1\).}
    \label{fig:steepest-descent}
\end{figure}

Let $\Gamma$ denote the contour of steepest descent of $G(z)$ \textcolor{black}{\textit{crossing} the real line above $\lambda_1$. Such a contour exists because, as is easy to verify, there exists a unique saddle point of $G(z)$ on $z\in(\lambda_1,+\infty)$. Since $\Im G(z)$ must remain constant along such a contour, we have}
\begin{equation*}
    0 = \Im[G(z)] 
      = y - \frac{1}{N} \sum_{j=1}^N \arg((x - \lambda_j) + \im y),
\end{equation*}
for any $z = x + \im y \in \Gamma$.
From this equation, we observe that  $\Gamma$ is symmetric around the real axis.

Next, observe that for a fixed imaginary part $y > 0$, $\arg((x -
\lambda_j) + \im y)$ is strictly decreasing with $x$.
Hence, \textcolor{black}{equation $0=\Im G(x + \im y)$} can have at most one solution for any positive $y$.
By symmetry around the real axis, this also holds for $y < 0$.
This means that it is possible to parameterise
\[
  \Gamma
  = \{\Gamma(t) : 0 < t < 1\}
\]
so that $\Im \Gamma(t)$ is increasing in $t$.

Moreover, \textcolor{black}{since as $y\uparrow \pi$ we must have $x\rightarrow -\infty$,} we see that $\Gamma(0^+) = -\infty - \im \pi$ and $\Gamma(1^-) = -\infty  + \im \pi$.
Therefore, $\Gamma$ must have upper-bounded real part, and so
\[
  \int_{\mathcal{K}} \exp\{(N/\alpha) G(z)\} \diff z
  = \int_{\Gamma} \exp\{(N/\alpha) G(z)\} \diff z.
\]

To continue, we need one last result about $\Gamma$, which formalizes the notion that $\Gamma$ passes above $\lambda_1$ at a distance of roughly $N^{-2/3}$:
\begin{lemma}
  \label{lem:y0-bounds}
  \textcolor{black}{Under Assumption W or G$^\omega$, }the function
\begin{equation*}
    f(y)
    = \Im[ G(\lambda_1 + \im y)] \\
    = y - \frac{\pi}{2N} - \frac{1}{N} \sum_{j=2}^N \arctan     \Bigl(\frac{y}{\lambda_1 - \lambda_j}\Bigr)
  \end{equation*}
  has a unique positive root $y_0$.
If $a_N \to \infty$ such that \textcolor{black}{$a_N = O(N^\epsilon)$ for any $\epsilon>0$}, then a.a.s. 
  \begin{equation}
    \label{eq:y0-bounds}
    \frac{N^{-2/3}}{a_N}
    \leq y_0
    \leq N^{-2/3} a_N.
  \end{equation}
\end{lemma}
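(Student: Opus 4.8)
The plan is to study the function $f(y) = y - \frac{\pi}{2N} - \frac{1}{N}\sum_{j=2}^N \arctan(y/(\lambda_1 - \lambda_j))$ on $y > 0$ and locate its unique zero. First I would establish uniqueness of the positive root. Note $f(0^+) = -\pi/(2N) < 0$, while $f(y) \to y - \frac{\pi}{2N} - \frac{(N-1)\pi}{2N} \cdot \text{(something)} $; more precisely, as $y \to \infty$ each $\arctan$ term is bounded by $\pi/2$, so $f(y) \geq y - \pi/2 \to \infty$. Continuity then gives at least one root. For uniqueness, differentiate: $f'(y) = 1 - \frac{1}{N}\sum_{j=2}^N \frac{\lambda_1 - \lambda_j}{(\lambda_1 - \lambda_j)^2 + y^2}$. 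Since each summand is at most $1/(\lambda_1 - \lambda_j) \cdot \frac{(\lambda_1-\lambda_j)^2}{(\lambda_1-\lambda_j)^2 + y^2} \le \frac{1}{\lambda_1 - \lambda_j}$, and $\frac1N \sum_{j\ge 2}\frac{1}{\lambda_1-\lambda_j} = 1 + O_{\Pr}(N^{-1/3})$ by \cref{lemma lambda1statistic}, one has $f'(y) \ge 1 - (1 + O_{\Pr}(N^{-1/3})) \cdot (\text{factor} < 1)$; a cleaner route is to note $f'(y)$ is strictly increasing in $y$ (each term $\frac{\lambda_1-\lambda_j}{(\lambda_1-\lambda_j)^2+y^2}$ decreases in $y$ for $y>0$, and the negative of it increases), so $f$ is convex on $y>0$ with $f(0^+)<0$, forcing a unique positive root $y_0$.

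For the upper bound $y_0 \le N^{-2/3} a_N$, it suffices to show $f(N^{-2/3} a_N) \ge 0$ a.a.s., since $f$ is increasing past its root. Using $\arctan x \le \pi/2$ for all $x$ gives only $f(y) \ge y - \pi/2$, which is too weak; instead use $\arctan x \le x$ for $x \ge 0$, so $\frac1N\sum_{j\ge2}\arctan(y/(\lambda_1-\lambda_j)) \le \frac{y}{N}\sum_{j\ge2}\frac{1}{\lambda_1-\lambda_j} = y(1 + O_{\Pr}(N^{-1/3}))$ by \cref{lemma lambda1statistic}. That gives $f(y) \ge y - \pi/(2N) - y(1 + O_{\Pr}(N^{-1/3})) = -\pi/(2N) - y \cdot O_{\Pr}(N^{-1/3})$, which is negative — so the naive linear bound is also too crude near where many gaps $\lambda_1 - \lambda_j$ are of order $N^{-2/3}$. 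The fix is to split the sum: for the $O_{\Pr}(1)$ many indices $j$ with $\lambda_1 - \lambda_j = O(N^{-2/3})$ (controlled by \cref{lemma three points}(iii)), bound $\arctan$ by $\pi/2$, contributing $O_{\Pr}(N^{-1})$ to $f$; for the bulk, use that $\arctan x \le x$ but now the relevant $\lambda_1 - \lambda_j$ are $\gtrsim N^{-2/3}$, combined with the concavity estimate $\arctan x \ge x - x^3/3$ to get a two-sided handle. Carrying this out, one shows $f(N^{-2/3}a_N) = N^{-2/3}a_N(1 + o(1)) - (\text{bulk arctan sum})$, and the bulk sum, being $\le \frac1N \sum_{j \ge 2}\frac{y}{\lambda_1 - \lambda_j}$ minus a cubic correction of order $y^3 \cdot \frac1N\sum \frac{1}{(\lambda_1-\lambda_j)^3}$, stays below $N^{-2/3}a_N$ because of the cubic gain; hence $f \ge 0$ there. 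For the lower bound $y_0 \ge N^{-2/3}/a_N$, symmetrically show $f(N^{-2/3}/a_N) \le 0$ a.a.s.: here $y$ is so small that $\arctan(y/(\lambda_1-\lambda_j)) \ge \frac{y}{\lambda_1 - \lambda_j}(1 - \frac13 \frac{y^2}{(\lambda_1-\lambda_j)^2})$, and since $\frac1N\sum_{j\ge2}\frac{1}{\lambda_1-\lambda_j} = 1 + O_{\Pr}(N^{-1/3})$ while the cubic correction is $y^2 \cdot O_{\Pr}(N^{1/3}) = a_N^{-2}N^{-4/3}\cdot O_{\Pr}(N^{1/3}) = o_{\Pr}(1)$ relative to $y$, we get $\frac1N\sum_{j\ge2}\arctan(\cdot) \ge y(1 - o_{\Pr}(1))$, so $f(y) \le y - \pi/(2N) - y(1-o_{\Pr}(1)) = y \cdot o_{\Pr}(1) - \pi/(2N)$, which is $\le 0$ a.a.s. since $y = N^{-2/3}/a_N \gg N^{-1}$ only if... — actually one needs $y \cdot o_{\Pr}(1) \le \pi/(2N)$; this requires care, and I would instead push the expansion to the next order or use the precise rate $\frac1N\sum \frac{1}{\lambda_1-\lambda_j} - 1 = O_{\Pr}(N^{-1/3})$ to get $f(y) \le -\pi/(2N) + y \cdot O_{\Pr}(N^{-1/3}) + y^3 O_{\Pr}(N^{1/3})$ and check that with $y = N^{-2/3}/a_N$ both correction terms are $o(N^{-1})$, hence $f(y) < 0$ a.a.s.

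The main obstacle I anticipate is the interplay, near $y \sim N^{-2/3}$, between the smallest few spectral gaps $\lambda_1 - \lambda_j$ (which are themselves of order $N^{-2/3}$, so the arctan arguments are $\Theta_{\Pr}(1)$ and cannot be linearized) and the bulk of the spectrum. The clean statements of \cref{lemma lambda1statistic} and \cref{lemma three points}(iii),(iv) are exactly tailored to separate these two contributions — the former gives $\frac1N\sum \frac{1}{\lambda_1-\lambda_j} = 1 + O_{\Pr}(N^{-1/3})$ and $\frac1N\sum \frac{1}{(\lambda_1-\lambda_j)^2} = O_{\Pr}(N^{1/3})$, while the latter controls how many gaps are anomalously small — so the real work is bookkeeping these estimates carefully enough that the $\pm\pi/(2N)$ constant term and the $a_N^{\pm1}$ slack in \eqref{eq:y0-bounds} both come out on the correct side. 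A secondary point requiring attention is that $a_N$ is allowed to grow like any $N^\epsilon$, so the cubic-correction term $y^3 \cdot O_{\Pr}(N^{1/3})$ at $y = N^{-2/3}a_N$ equals $a_N^3 N^{-2}\cdot O_{\Pr}(N^{1/3}) = a_N^3 \cdot O_{\Pr}(N^{-5/3})$, which is comfortably $o(N^{-2/3}a_N)$; one just has to record that these comparisons hold for every fixed $\epsilon$.
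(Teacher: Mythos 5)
Your uniqueness argument (convexity of $f$, plus $f(0^+)=-\pi/(2N)<0$ and $f\to\infty$) matches the paper, and your plan for the lower bound on $y_0$ (showing $f(N^{-2/3}/a_N)<0$ via a Taylor lower bound on $\arctan$ and the first two inverse moments from \cref{lemma lambda1statistic}) is essentially the paper's proof. One slip: you write the error as $y^3 O_{\Pr}(N^{1/3})$, but \cref{lemma lambda1statistic} only controls $\sum(\lambda_1-\lambda_j)^{-l}$ for $l=1,2$, and the cube sum is in fact $\Theta_{\Pr}(N)$, not $O_\Pr(N^{1/3})$. The paper sidesteps this by using $\arctan x\ge x-x^2/4$ rather than $x-x^3/3$, which produces a quadratic remainder $\frac{y^2}{4N}\sum(\lambda_1-\lambda_j)^{-2}=y^2O_\Pr(N^{1/3})$; with $y=N^{-2/3}/a_N$ this is $o_\Pr(N^{-1})$, and the $-\pi/(2N)$ term wins.

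The upper bound on $y_0$ (showing $f(N^{-2/3}a_N)>0$) is where your argument breaks. You want to extract a gain from the strict concavity of $\arctan$, estimating the bulk terms' gain $u-\arctan u$ by a ``cubic correction'' $\sim u^3/3$ and invoking $\frac1N\sum(\lambda_1-\lambda_j)^{-3}$. But the bulk terms adjacent to your small-gap cutoff (those with $\lambda_1-\lambda_j\asymp N^{-2/3}$) have $u=y_+/(\lambda_1-\lambda_j)\asymp a_N\to\infty$, which is not small; the Taylor inequality $\arctan u\le u-u^3/3$ is false there (the series is alternating), and $u-\arctan u$ saturates at $u-\pi/2+O(1/u)$ rather than growing like $u^3$. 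Moreover, you cite \cref{lemma three points}(iii), which only gives an $O_\Pr(1)$ count of eigenvalues within a \emph{fixed} multiple of $N^{-2/3}$ of $\lambda_1$; that is not strong enough. The paper instead uses \cref{lemma three points}(v): a.a.s.\ at least $Ca_N^{3/2}$ eigenvalues lie within $\sim a_N N^{-2/3}$ of $\lambda_1$. For those indices $u>1+\pi/2$, hence $u-\arctan u\ge 1$, and
\[
  f(y_+)\;\ge\;y_+ - \frac{y_+}{N}\sum_{j\ge2}\frac1{\lambda_1-\lambda_j}+\frac{j^*}{N}-\frac{\pi}{2N}
  \;=\;O_\Pr\!\Bigl(\frac{a_N}{N}\Bigr)+\frac{j^*}{N}-\frac{\pi}{2N},
\]
which is positive a.a.s.\ because $j^*\gtrsim a_N^{3/2}\gg a_N$. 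The needed gain of order $a_N^{3/2}/N$ comes from counting eigenvalues near the edge, not from a cubic correction in the bulk; without invoking \cref{lemma three points}(v) the proof cannot close.
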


\begin{proof}
  Notice that, over $[0, \infty)$, $f$ is convex with $f(0) = -\pi/(2N)$ and $\lim_{y\rightarrow \infty} f(y) = \infty$.
  In particular, this means that it has a unique positive root, which we will call $y_0$.
  
  We will show that
  \(f(N^{-2/3} a_N^{-1}) < 0 < f(N^{-2/3} a_N)\)
  a.a.s., which implies \eqref{eq:y0-bounds}.

  Let \(y_{-} = N^{-2/3} a_N^{-1}\).
  Using $\arctan(x) \geq x - x^2/4$ for $x \geq 0$
  and then \cref{lemma lambda1statistic}, 
we have 
  \begin{align*}
    f(y_{-})
    &\leq y_{-} \Bigl(1 - \frac{1}{N} \sum_{j=2}^N \frac{1}{\lambda_1 - \lambda_j}\Bigr) + \frac{y_{-}^2}{4N} \sum_{j=1}^N \frac{1}{(\lambda_1 - \lambda_j)^2} - \frac{\pi}{2N} \\
    &= N^{-2/3} a_N^{-1} O_{\Pr}(N^{-1/3}) + \frac{1}{4} N^{-4/3} a_N^{-2} O_{\Pr}(N^{1/3}) - \frac{\pi}{2N} \\
    &= -\frac{\pi}{2N} + o_\Pr(N^{-1})
  \end{align*}
Thus $f(y_{-}) < 0$ a.a.s.

Next, set \(y_{+} = N^{-2/3} a_N\).
  Now some \(y_{+}/(\lambda_1 - \lambda_j)\) terms will diverge to \(\infty\) and so the linear approximation to \(\arctan\) is not helpful.
  To handle these cases, we define
  \begin{align*}
    j^*
    &= \max\Bigl\{j : \frac{y_{+}}{\lambda_1 - \lambda_j} > 1 + \frac{\pi}{2}\Bigr\} \\
    &= \#\Bigl\{j : \lambda_j > \lambda_1 - \Bigl(1 + \frac{\pi}{2}\Bigr)^{-1} a_N N^{-2/3} \Bigr\}.
  \end{align*}

  The significance of the \(1 + \pi/2\) term is that \(x - \arctan x \geq 1\) for \(x\) exceeding \(1 + \pi/2\), and hence
  \[
    \arctan \Bigl(\frac{y_+}{\lambda_1 - \lambda_j}\Bigr)
    \leq \frac{y_+}{\lambda_1 - \lambda_j} - \ind_{j \leq j^*}.
  \]

  We observe that, since \(\lambda_1 - 2 \sim N^{-2/3} \ll a_N
  N^{-2/3}\), we have a.a.s.
  
  \[
    j^*
    \geq
    j_0
    = \#\Bigl\{j : \lambda_j > 2 - \frac{1}{3} a_N N^{-2/3} \Bigr\}.
  \]

  Using part (v) of \cref{lemma three points} we have a.a.s.\ that \(j^* \geq j_0 > C a_n^{3/2}\) for some $C > 0$. 
  Consequently
  \begin{align*}
    f(y_{+})
    &= y_{+} - \frac{1}{N} \sum_{j=2}^N \arctan \Bigl(\frac{y_{+}}{\lambda_1 - \lambda_j}\Bigr) - \frac{\pi}{2N}\\
    &\geq y_+ - \frac{1}{N} \sum_{j=2}^N \frac{y_+}{\lambda_1 - \lambda_j} + \frac{j^*}{N} - \frac{\pi}{2N} \\
    &\geq O_{\Pr} \Bigl(\frac{a_N}{N}\Bigr) + \frac{j^*}{N} - \frac{\pi}{2N}.
  \end{align*}

  Since, a.a.s., \(j^* > C a_N^{3/2}\), this means that \(f(y_{+}) > 0\) a.a.s.
%
\end{proof}

Having established necessary results about $\Gamma$, we also define $z_0 = \lambda_1 + \im y_0$ and
\[
  \Gamma_0
  = \{z \in \Gamma : \abs{\Im(z)} \leq y_0\}.
\]
Since $\Gamma$ can be parameterized with increasing imaginary part, this curve is connected.

Using the fact that $G(z)$ is purely real on $\Gamma$ together with
the parameterisation of $\Gamma$ with increasing imaginary part, we
have that 
\begin{align}
  \frac{1}{2\pi \im}\int_\Gamma e^{(N/\alpha)[G(z) -
  \hat{G}(\lambda_1)]} \diff z 
  &\geq \frac{1}{2\pi}\textcolor{black}{\int_{-y_0}^{y_0}} e^{(N/\alpha)\Re[G(z) -
    \hat{G}(\lambda_1)]} \diff y  \notag \\
   &\geq \frac{y_0}{\pi} e^{(N/\alpha)\Re[G(z_0) -
     \hat{G}(\lambda_1)]},   \label{eq:descent-bd}
\end{align}
since the integrand is minimized on $\Gamma_0$ at the endpoints $z_0, \bar{z}_0 =
\lambda_1 \pm \im y_0$.

Appealing again to \cref{lemma lambda1statistic}, we obtain for
$\alpha = 1, 2$ 
\begin{align*}
  \log y_0 + (N/\alpha)  \Re[G(z_0) - \hat{G}(\lambda_1)]
  &= \log y_0 -\frac{1}{\alpha} \log y_0 -\frac{1}{2\alpha} \sum_{j=2}^N \log \Bigl(1 + \frac{y_0^2}{(\lambda_1 - \lambda_j)^2} \Bigr) \\
  &\geq \Bigl(1 - \frac{1}{\alpha}\Bigr) \log y_0  -\frac{y_0^2}{2\alpha} \sum_{j=2}^N \frac{1}{(\lambda_1 - \lambda_j)^2} \\
  &= \Bigl(\frac{\alpha - 1}{\alpha}\Bigr)
    \log  y_0
    -\frac{y_0^2 }{2\alpha}
    O_{\Pr}(N^{4/3}). \numberthis{eq:G-bound-y0} \\
  & \geq -\Bigl(\frac{\alpha - 1}{3}\Bigr) \log N + O_{\Pr}(\log \log N),
\end{align*}
since $N^{-2/3}/\log N \leq y_0 \leq N^{-2/3} \sqrt{ \log \log N}$
a.a.s. according to \cref{lem:y0-bounds}. 

Inserting this bound into \eqref{eq:descent-bd}, we obtain 
\[
  \frac{1}{2\pi \im}\int_\Gamma e^{(N/\alpha) G(z)} \diff z
  \geq \exp\Bigl\{ (N/\alpha) \hat{G}(\lambda_1) -\Bigl(\frac{\alpha - 1}{3}\Bigr) \log N + O_{\Pr}(\log \log N)\Bigr\},
\]
which is the lower bound required to complete the proof \textcolor{black}{of \cref{prop:positive-critical} for $b=0$}.

\subsection{Limiting law in positive-critical regime}
\label{sec:limit-law-posit}


\begin{theorem}
\label{theorem_positive}
  Suppose Assumption W holds and  \(\beta = 1 + b N^{-1/3} \log^{1/2} N\) with \(b \geq 0\).
  Then, \textcolor{black}{in the sub-critically spiked Wigner setting of theorem \ref{thm:main},}
  \[
    \frac{N}{\sqrt{\frac{\alpha}{12}\log N}}\left(F_{\alpha,N} - F(\beta)
      +\frac{\log N}{12 N}\right)
    \drightarrow \Normal(0, 1) + \sqrt{\frac{3}{\alpha}} b \TW_{2/\alpha}
  \]
  with independent \(\Normal(0, 1)\) and \(\TW_{2/\alpha}\).
\fix{In contrast, in case G$^\omega$, the sign of $\log N/12N$ is
  reversed and $TW_{2/\alpha}$ is replaced by $BV_{2/\alpha}(\omega)$,
  still independent of \(\Normal(0, 1)\).}
\end{theorem} 
\begin{proof}
   From \eqref{eq for z}  and Proposition \ref{prop:positive-critical} we have
\begin{equation} \label{eq:firstd}
  2 NF_{\alpha,N}
   = \alpha \log C_{\alpha,N} +  N \hat{G}(\lambda_1) - \frac{\alpha(\alpha-1)}{3}
   \log N + O_{\Pr}(\log \log N).
\end{equation}
The behavior of $N \hat{G}(\lambda_1)$ is governed by the approximation
\begin{equation}
  \label{lam1to2}
     \sum_{j = 2}^{N} \log(\lambda_1 - \lambda_j) = \sum_{j = 2}^{N} \log|2 - \lambda_j| + N(\lambda_1 - 2) + O_{\Pr}(1)  .
\end{equation}
To verify its validity, let $\Delta_N$ denote the difference between right and
left sides, without the error term. We set
\begin{align*}
  \Delta_N
    & = S_N + N(2-\lambda_1) \bigg[ \frac{1}{N} \sum_2^N
      \frac{1}{\lambda_1-\lambda_j} - 1 \bigg], \\
  S_N
    & = \sum_{j=2}^N X_{Nj}, \qquad \qquad
      X_{Nj} = \log|2-\lambda_j| - \log(\lambda_1-\lambda_j) -
      \frac{2-\lambda_1}{\lambda_1-\lambda_j}.  
\end{align*}
The second term of $\Delta_N$ is $O_{\Pr }(1)$ from \cref{lemma three points} (i) and \cref{lemma
  lambda1statistic}.
For each fixed $j$, $X_{Nj} =O_{\Pr }(1)$  since 
both $|2-\lambda_j|$ and $\lambda_1-\lambda_j$ are
$\Theta_{\Pr}(N^{-2/3})$,
the latter by Lemma \ref{lemma three points}, part (iv).

To show that $S_N$ is $O_{\Pr }(1)$, \textcolor{black}{we use the
  following criterion: if for each $\epsilon$ small there exist events
  $\mathcal{E}_{N,\epsilon}$ of probability at least $1 - \epsilon$
  such that on $\mathcal{E}_{N,\epsilon}$ for
$N > N(\epsilon)$ we have
$S_N = S_{N1}(\epsilon) + S_{N2}(\epsilon)$ with
$S_{Nk}(\epsilon) = O_{\Pr }(1)$, then $S_N = O_{\Pr }(1)$.}

First, we argue that for each $\epsilon > 0$, there exist
$k = k(\epsilon), C=C(\epsilon)>0$ such that the event
\begin{equation*}
  \mathcal{E}_{N,\epsilon}
    = \{ \lambda_1 \leq 2 + C N^{-2/3}, \lambda_k \leq 2 - C N^{-2/3} \}
\end{equation*}
has $\Pr(\mathcal{E}_{N,\epsilon}) > 1 - \epsilon$ for large enough $N$. 
Indeed, \cref{lemma three points} (i) provides $C_\epsilon$ such
  that $ \lambda_1 \leq 2 + C_{\epsilon} N^{-2/3} $ with probability
  at least $1 - \epsilon / 2$. Lemma \ref{lemma three points}
  (iii) and Markov's inequality show that
  $\Pr(\lambda_k \geq 2 - x N^{-2/3}) \leq C_x/k$.
  With $x = C_\epsilon$, 
  this can be made at most $\epsilon/2$ by choosing 
  $ k(\epsilon) = \lceil 2 C_x / \epsilon \rceil $.
  Hence $\Pr(\mathcal{E}_{N,\epsilon}) \geq 1 - \epsilon$.

Let $S_{N1}(\epsilon), S_{N2}(\epsilon)$ denote the sum in $S_N$
restricted to $j < k(\epsilon)$ and $j\geq k(\epsilon)$ respectively.
On $\mathcal{E}_{N,\epsilon}$, the sum $S_{N1}(\epsilon)$ has a
finite number of $O_{\Pr }(1)$ terms and so is itself $O_{\Pr }(1)$.
Also on $\mathcal{E}_{N,\epsilon}$, observe that
$ (2 - \lambda_1)/(\lambda_1 - \lambda_j) \geq -\frac{1}{2} $
for all $j \geq k$. Since $ |\log(1 + x) - x| \leq C_1x^2 $ for $x >
-\frac{1}{2}$ and some $C_1>0$, we have the bound 
\begin{equation*}
  |S_{N2}(\epsilon)| \leq C_1 (\lambda_1-2)^2 \sum_{j=2}^N
  \frac{1}{(\lambda_1-\lambda_j)^2}
       = O_{\Pr }(1),
\end{equation*}
from \cref{lemma three points} (i) and \cref{lemma lambda1statistic}.
This completes the proof of \eqref{lam1to2}.

Returning to $N \hat{G}(\lambda_1)$, using \eqref{lam1to2} and $\beta
- 1 = b N^{-1/3} \log^{1/2} N$, we 
obtain the key decomposition

\begin{align*}
  N \hat{G}(\lambda_1)
  &  = 2N\beta + N\beta(\lambda_1-2) - \sum_{j=2}^N
    \log(\lambda_1-\lambda_j)  \\
  & = 2N\beta 
    - \sum_{j=2}^N \log |2-\lambda_j| 
    + b \sqrt{\log N}  N^{2/3}(\lambda_1-2)
    + O_{\Pr }(1)  \\
  & = 2 N \beta     - \sum_{j=1}^N \log |2-\lambda_j| 
        - \frac{2}{3} \log N + b \sqrt{\log N} \xi_{2N} + O_{\Pr }(1),
\end{align*}
after adding and subtracting $\log |2-\lambda_1| = -\frac{2}{3} \log N
+ O_{\Pr}(1)$ and setting $ \xi_{2N}
= N^{2/3}\left( \lambda_{1}-2\right)$.

Combining this with \eqref{eq:firstd} and \eqref{eq:C_N-asymptotics},
we obtain
\begin{equation*}
  2 N F_{\alpha,N}
  = N(-1-\log \beta + 2 \beta) - \frac{\alpha}{6} \log N
     - \sum_{j=1}^N \log |2-\lambda_j| 
     + b \sqrt{\log N} \xi_{2N} + O_{\Pr }(\log \log N),
\end{equation*}
where we note that the coefficient of $\log N$, namely
$\frac{1}{2}\alpha - \frac{2}{3} - \frac{1}{3}\alpha(\alpha-1)$,
reduces to $-\frac{\alpha}{6}$ when $\alpha = 1$ or $2$.

Let
\begin{equation*}
  N \check \xi_N
   = \sum_{j=1}^N\log\abs{2-\lambda _{j}} - \frac{N}{2} + \frac{\alpha
     - 1}{6} \log N.
\end{equation*}
Combining the two previous displays we obtain (compare \eqref{eq:key-pos})
\begin{equation*}
  2 N F_{\alpha,N}
  = N\left(-\frac{3}{2}-\log \beta+2\beta\right)-\frac{\log N}{6}- N \check \xi_N + b \sqrt{\log N} \xi_{2N} +
  O_{\Pr }(\log \log N). 
\end{equation*}


\fix{To this point, the arguments are the same for Cases W and G$^\omega$.
Now, for case W, set}
  $\xi_{1N}=-N\check{\xi}_N/\sqrt{\frac{\alpha}{3}\log N}$,
  and rewrite the \fix{last} display as
\begin{equation*}
   N F_{\alpha,N}
   = N \left(F(\beta)
     -\frac{\log N}{12 N}\right) + \sqrt{\tfrac{\alpha}{12} \log N} \, \xi_{1N} + \frac{b}{2}
  \sqrt{\log N} \, \xi_{2N} + 
  O_{\Pr }(\log \log N)
\end{equation*}

By \cref{independence},
$(\xi_{1N},\xi_{2N})\drightarrow\mathcal{N}(0,1)\times
\mathrm{TW}_{2/\alpha}$
so $\xi_{1N} + c \xi_{2N} \stackrel{\rm d}{\to}
\mathcal{N}(0,1) + c \TW_{2/\alpha}$ with independent
$\mathcal{N}(0,1)$ and $\TW_{2/\alpha}$
This completes the proof of \cref{theorem_positive} and thus the
non-negative critical part of theorem \ref{thm:main}.

\fix{In case G$^\omega$, replace $N\check{\xi}_N$ by
  $N\check{\xi}_N + \frac{1}{3}\log N$ in the definition of $\xi_{1N}$
  and (to preserve the identity in the previous display) reverse the
  sign of $\log N/12N$. 
  Now apply \cref{independence} for case G$^\omega$ in a parallel way
  to obtain the corresponding part of theorem \ref{thm: triple}.}
\end{proof}

\section{Some key results for G(O/U)E settings}\label{sec:proof of CLT}

\subsection{\textcolor{black}{Proof of CLT's \ref{CLT1} and  \ref{CLT2}}}
\label{sec:proofs-clts}

\fix{
We prove Theorems \ref{CLT1} and \ref{CLT2} together in the spiked G(O/U)E
settings, including G$^\omega$.
Define notation for cases
$C_{\rm fix}, C_{\rm log}, J_{\rm fix}, J_{\rm crit}$ as follows
\begin{equation*}
  \gamma =
  \begin{cases}
    2 + CN^{-2/3} \log N & C>0 \quad  (C_{\rm \log}) \\
    2 + C N^{-2/3}  & C \in \mR \quad (C_{\rm fix})
  \end{cases}
  \qquad
  J_N =
  \begin{cases}
    J & J \in (0,1) \quad (J_{\rm fix}) \\
    1 - \omega N^{-1/3} & \omega \in \mR \qquad \ (J_{\rm crit})  
  \end{cases}
\end{equation*}
We will use the identity 
\begin{equation}
\label{identity1}
\log|\det
(W_{J,N}-\gamma)|
=\log|\det(W_N-\gamma)|+ 
\log|1+J_Nw^\ast(W_N-\gamma)^{-1}w|. 
\end{equation}
The scaled distribution of $\log|\det(W_N-\gamma)|$ for $J=0$ is
obtained in JKOP22, so it suffices to show that for both cases
$C_{\rm \log}$ and $C_{\rm fix}$, we have
\begin{equation}
  \label{eq:shift}
  \delta L_N(J,\gamma)  =
  \log|1+J_Nw^\ast(W_N-\gamma)^{-1}w| = 
   \begin{cases}
     \log(1-J) + o_\Pr(1) = o_\Pr(\tau_N)  & (J_{\rm fix}) \\
     -\frac{1}{3} \log N + o_\Pr(\tau_N)  & (J_{\rm crit})
   \end{cases}
\end{equation}
}
The invariance of G(O/U)E with respect to
orthogonal/unitary transformations yields
\begin{equation}
\label{quadratic}
w^\ast (W_N-\gamma)^{-1}w\overset{\mathrm{d}}{=}
\frac{\frac{1}{N}\sum_{i=1}^N|\xi_i|^2/(\lambda_i-\gamma)}{\xi^\ast
  \xi/N}
\end{equation}
where $\xi$ is an $N$-dimensional random vector with
i.i.d.~components, distributed as complex normal random variables
$\mathcal{N}_{\mathbb{C}}(0,1)$ in case of GUE and as real normal
random variables $\mathcal{N}(0,1)$ in case of GOE. Furthermore, $\xi$
is independent from $\lambda_1,\dotsc,\lambda_N$ (the eigenvalues of
$W_N$).

Denote the numerator of (\ref{quadratic}) by $T_{1N}$. We decompose
\begin{equation*}
  T_{1N} = \frac{1}{N}\sum_{i=1}^N\frac{1}{\lambda_i-\gamma} +
  \Delta_N, \qquad
  \Delta_N  = \frac{1}{N} \sum_{i=1}^N \frac{|\xi_i|^2 - 1}{\lambda_i
    - \gamma}.          
\end{equation*}
From Lemma \ref{lemma derivatives of G}   for $C_{\rm log}$ (with $C = b^2$) and Lemma \ref{lemma
  lambda1statistic} for $C_{\rm
  fix}$, we have
\begin{equation*}
  \frac{1}{N} \sum_{i=1}^N \frac{1}{\lambda_i - \gamma}
      = -1  + \delta_{N} + O_\Pr(N^{-1/3}).
\end{equation*}
where $\delta_{N} = N^{-1/3} \sqrt{C \log N}$ in case $C_{\rm log}$
and $0$ for case $C_{\rm fix}$.
The same two lemmas also yield
\begin{equation*}
  \mathbf{E}[ \Delta_N| \lambda] = 0, \qquad
  \mathbf{V}[ \Delta_N| \lambda] =
  \frac{1}{N^2}\sum_{i=1}^N\frac{\alpha}{(\lambda_i-\gamma)^2}
  =O_{\Pr}(N^{-2/3})   
\end{equation*}
Hence $\Delta_N = O_\Pr(N^{-1/3})$ (\textcolor{black}{see \cref{lem:conditional-unconditional}}) and so
$T_{1N} = -1 + \delta_N + O_\Pr(N^{-1/3})$.
Combined with
\begin{equation}
\label{denominator}
\xi^\ast\xi/N=1+O_{\Pr}(N^{-1/2}),
\end{equation}
\fix{we arrive at
\begin{equation}
  \label{eq:p-decomp}
  \begin{split}   
    1 + J_N w^\ast(W_N-\gamma)^{-1} w
    & = 1 + J_N T_{1N}/(1+O_\Pr(N^{-1/2})) \\
   & = 1 + J_N(-1+ \delta_{N} + O_\Pr(N^{-1/3}))[1+O_\Pr(N^{-1/2})].
  \end{split}
\end{equation}
}

\fix{In case $J_{\rm fix}$, the right side equals $1 - J + o_\Pr(1) =
o_\Pr(\tau_N)$ in both cases $C_{\rm log}$ and $C_{\rm fix}$ giving
(\ref{eq:shift}).
We turn to the critical case $J_{\rm crit}$. From (\ref{eq:p-decomp}),
we have
\begin{equation}
  \label{eq:NVN}
  N^{-1/3} V_N
   := 1 + J_N w^\ast(W_N-\gamma)^{-1} w
  = \delta_{N} + \omega N^{-1/3} + O_\Pr(N^{-1/3}).
\end{equation}
In case $C_{\rm log}$, the right side equals $N^{-1/3}(\sqrt{C \log N}
+ O_\Pr(1))$, and so
$\delta L_N(J,\gamma) = -\tfrac{1}{3} \log N + o_\Pr(\tau_N)$
as required. In case $C_{\rm fix}$, we show that
$V_N = \Theta_\Pr(1)$, for then
$\delta L_N(J,\gamma) = \log( N^{-1/3} V_N) = -\tfrac{1}{3} \log N + O_\Pr(1).$
The upper bound $V_N = O_\Pr(1)$ follows from (\ref{eq:NVN}) since
$\delta_{N}$ vanishes in this case.
}
  
\fix{Now set
$N^{-1/3} \tilde{V}_N = 1 + J_NT_{1N}$,
so that from (\ref{eq:p-decomp})
we have
$V_N - \tilde{V}_N  = J_N N^{1/3} T_{1N} O_\Pr(N^{-1/2})
  = O_\Pr(N^{-1/6})$.
We now show that $\tilde{V}_N^{-1} = O_\Pr(1)$ 
so that $V_N^{-1} = O_\Pr(1)$ as required. 
$\tilde{V}_N$ has representation
\[
\tilde{V}_N =
N^{1/3}\left(\frac{J_N}{N}\sum_{i=1}^N\frac{|\xi_i|^2}{\lambda_i-\gamma}+1\right)
=\frac{J_N|\xi_1|^2 -\psi_N}{N^{2/3}(\lambda_1-\gamma)} 
= \frac{X_N - Y_N}{Z_N}, 
\]
where $\psi_N$ depends on $N$, $\gamma$, $\omega$, $\lambda_1,\dotsc,\lambda_N$, and
$\xi_i$ with $i=2,\dotsc,N$, but is stochastically independent from
$\xi_1$.
Since
\[
Z_N =
N^{2/3}(\lambda_1-\gamma)=N^{2/3}(\lambda_1-2-CN^{-2/3})\drightarrow \TW_{2/\alpha}-C, 
\]
and $X_N = J_N|\xi_1|^2 \sim \textcolor{black}{J_N}\frac{\alpha}{2}\chi^2(2/\alpha)$
is independent of $Y_N = \psi_N$ and
has a density $f_{X_N}(x)$ such that $x^{1/2}f_{X_N}(x)$ is bounded above by a constant $B = B(\alpha,\textcolor{black}{\omega})$ \textcolor{black}{for all sufficiently large $N$},
our conclusion follows from the next lemma (proof in Section \ref{sec:OP1proof}.)
}
\begin{lemma} \label{lem:OP1}
  Suppose $\{X_N \}, \{Y_N\}, \{Z_N\}$ are real valued random variables such
  that
  (i) $Z_N = O_{\Pr}(1)$,
  (ii) $X_N$ is non-negative and, \textcolor{black}{for all sufficiently large $N$}, has density $f_{X_N}(x)$ w.r.t. Lebesgue measure such that $\sqrt{x}f_{X_N}(x)$ is bounded by $B$,
  (iii) $X_N$ and $Y_N$ are independent, for each $N$. Then
  \begin{equation*}
    W_N = Z_N/(X_N - Y_N) = O_{\Pr}(1).
  \end{equation*}
\end{lemma}

\subsection{Proof of \cref{independence} for
  G(O/U)E} \label{sec:independence for Gaussian case}
The proof is given here for $W_N$ from G(O/U)E. The extension to cases
G$^\omega$ and W follows directly from (\ref{identity1})
(Sections \ref{sec:proof-prop-ref} and \ref{sec:proof-indep-wigner} in Appendix).

The marginal convergences follow from \cref{CLT2} and \cref{lemma three points} (i). Hence, we only need to prove the asymptotic independence.  Our proof is based on the tridiagonal representation of GUE and GOE. Recall that the eigenvalues of the matrix
\begin{equation}\label{tridiag_M_N}
    \textcolor{black}{\sqrt{n}}M_{n} = 
    \begin{pmatrix}
        a_1 & b_1 & & \\
        b_1 & a_2 & b_2 &   \\
         & b_{2} & \ddots & \ddots \\
         && \ddots & \ddots & b_{n-1} \\
         &&& b_{n-1} & a_{n}
    \end{pmatrix},
\end{equation}
with independent \( a_{i} \sim \mathcal{N}(0, \alpha) \) and \( b_{i}
\sim \chi(2i/\alpha) / \sqrt{2 /\alpha} \), are distributed as
eigenvalues of GUE matrices for $\alpha = 1$ and as eigenvalues of GOE
matrices for $\alpha = 2$. (e.g. \cite[][Ch. 4]{anderson2010introduction},
\cite{TaoVu}. 
\textcolor{black}{Therefore, in our proof, we may and will reinterpret
  $\lambda_j$ as the eigenvalues of $M_N$.} 



In what follows, we establish the representations
\[
    \xi_{1N} = X_N/\sqrt{\frac{\alpha}{3}\log N} + o_{\Pr}(1),
    \qquad
    \xi_{2N} = \textcolor{black}{N^{2/3}(Y_N-2)} + o_{\Pr}(1),
\]
where \( X_N \) depends only on \( a_i, b_{i-1} \) with \( i \leq N - 2N^{1/3} \log^3 N\), and \( Y_N \) depends only on \( a_i, b_{i-1} \) with \( i > N - 2 N^{1/3} \log^3 N\). Since $X_N$ and $Y_N$ are independent, 
such representations yield \cref{independence}

\textit{Representation for $\xi_{1N}$ (log-determinant).} 
In \cite{johnstone2020logarithmic}, the derivation of the CLT for
$\xi_{1N}$ \textcolor{black}{(\cref{CLT2} here)} is based on the
tridiagonal representation. Let us recall needed elements of that proof. \textcolor{black}{Equation (18) of \cite{johnstone2020logarithmic}} shows that 
\begin{equation}
\label{eq:18 of JKOP}
    \sum_{j = 1}^{N} \log |2 - \lambda_j| = \sum_{j = 1}^{N} \log |2 + N^{-2/3} \bar{\sigma}_N - \lambda_j | - N^{1/3} \bar{\sigma}_{N} + \textcolor{black}{O_{\Pr}(\bar{\sigma}_{N}^2)},
\end{equation}
where \( \bar{\sigma}_N := (\log\log N)^{3} \). Further, the sum on the right hand side of the above display can be well approximated by a deterministic shift of a linear combination of the independent variables \( a_i, b_i^2 \).

To be precise, consider \( c_i = (b_{i}^2 - i)/{\sqrt{i}} \), so that \( \E c_i = 0 \) and \( \mathrm{Var}(c_i) = \alpha \). For \( \theta_N = 1 + N^{-2/3} \bar{\sigma}_N / 2 \), define \textcolor{black}{recursively}
\textcolor{black}{
\[
L_i=\xi_i+\gamma_i L_{i-1}\qquad \text{for } i\geq 1,
\]}
where 
\[
    \xi_{i} = \alpha_{i} + \beta_{i} \, ,
    \qquad
    \alpha_{i} = \frac{a_i}{\sqrt{N}\theta_N r_i},
    \qquad
    \beta_{i} = \sqrt{\frac{\gamma_i}{N}} \frac{c_{i-1}}{\theta_N r_{i-1}} \, 
\]
\textcolor{black}{with $\beta_1:=0$,}
and
\begin{align}\label{r_definition}
    r_{i} &= 1 + \sqrt{1 - \frac{i-1}{N \theta_N^2}}, \qquad
    m_{i} = 1 - \sqrt{1 - \frac{i-1}{N \theta_N^2}}, \qquad
    \gamma_{i} = \frac{m_i}{r_{i}}.
\end{align}
Then, equations \textcolor{black}{(49), (50), and the last equation of section 4.1.6} of \cite{johnstone2020logarithmic} show that 
\begin{equation*}
    \sum_{i = 1}^{N} \log |2 + N^{-2/3} \bar{\sigma}_N - \lambda_i | = \frac{N}{2} + N^{1/3} \bar{\sigma}_N - \frac{\alpha - 1}{6}\log N - \sum_{i = 1}^{N} L_{i} + O_{\Pr}(\textcolor{black}{\bar{\sigma}_N^2}) \, .
\end{equation*}
\textcolor{black}{Combining this with \eqref{eq:18 of JKOP}and recalling the definition of $\xi_{1N}$, we obtain
\begin{equation}\label{eq:xi1N via L}
    \xi_{1N}=\sum_{i=1}^N L_i/\sqrt{\frac{\alpha}{3}\log N}+o_{\Pr}(1).
\end{equation}
Now we are ready to prove the following.}

\begin{lemma}
We have that $ \xi_{1N}=X_N/\sqrt{\frac{\alpha}{3}\log N} + o_{\Pr}(1) $,
where $X_N$ depends only on $a_i, b_{i-1}$ for $i \leq N - 2N^{1/3}\log^3 N$.
\end{lemma}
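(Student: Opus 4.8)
The goal is to show that the sum $\sum_{i=1}^N L_i$, which by \eqref{eq:xi1N via L} captures $\xi_{1N}$ up to $o_{\Pr}(1)$ (after dividing by $\sqrt{\frac{\alpha}{3}\log N}$), can be replaced by its truncation $X_N = \sum_{i \leq N - 2N^{1/3}\log^3 N} L_i$ at the cost of an error that is $o_{\Pr}(\sqrt{\log N})$. So the plan is to bound the tail contribution $R_N := \sum_{i > N - 2N^{1/3}\log^3 N} L_i$ and show $R_N = o_{\Pr}(\sqrt{\log N})$. Note $L_i$ is defined recursively by $L_i = \xi_i + \gamma_i L_{i-1}$, so each $L_i$ is actually an infinite (truncated at $1$) geometric-type series $L_i = \sum_{j=1}^{i} \bigl(\prod_{k=j+1}^{i}\gamma_k\bigr)\xi_j$ in the innovations $\xi_j$; thus $L_i$ for large $i$ does depend on \emph{all} earlier $a_j, b_{j-1}$. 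The key point is that the dependence decays: $\gamma_i < 1$ for $i < N$, and near the edge $i \approx N$ one has $\gamma_i \approx 1 - 2\sqrt{1 - i/N}$, so the ``memory'' of the recursion has length of order $N/\sqrt{N-i}$, which is $\lesssim N^{2/3}\log^{-3/2} N \cdot (\text{something})$ when $N - i \asymp N^{1/3}\log^3 N$ — i.e. short compared to the truncation scale. This should let me argue that $X_N$, while not \emph{literally} independent of the top block, differs negligibly from a genuinely block-independent quantity; alternatively, and more cleanly, I redefine $X_N$ to be exactly the block-independent truncation and absorb everything else into the error term.

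Concretely, the steps I would carry out are: (1) Write $\sum_{i=1}^N L_i = X_N + R_N$ where $X_N$ is the partial sum over $i \leq N_0 := N - 2N^{1/3}\log^3 N$ — but first replace each $L_i$ in $X_N$ by $\tilde L_i$, the solution of the \emph{same} recursion restarted from $\tilde L_0 = 0$ but using only innovations $\xi_j$ with $j \leq N_0$; the difference $L_i - \tilde L_i = \bigl(\prod_{k=1}^i \gamma_k\bigr)L_0 = 0$ when $i \le N_0$ since the recursions only involve $\xi_j, j\le i \le N_0$ — so in fact $L_i = \tilde L_i$ for $i \le N_0$ automatically and $X_N$ is \emph{exactly} a function of $a_i, b_{i-1}$, $i \le N_0$. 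So step (1) is just the observation that the truncated partial sum is automatically block-measurable. (2) Bound $R_N = \sum_{i = N_0+1}^{N} L_i$ in probability: using the explicit representation $L_i = \sum_{j \le i}\bigl(\prod_{k=j+1}^i\gamma_k\bigr)\xi_j$, estimate $\E|L_i|^2$ or $\E L_i^2$ via $\Var(\xi_j) \asymp \alpha_j^2 + \beta_j^2 \asymp$ (from the definitions) $(N\theta_N^2 r_j^2)^{-1}$ plus the $c_{j-1}$ term of comparable order, and the geometric weights $\prod_{k=j+1}^i \gamma_k$. (3) Sum over $i$ from $N_0+1$ to $N$ and apply Markov/Chebyshev to conclude $R_N = o_{\Pr}(\sqrt{\log N})$, hence $R_N/\sqrt{\tfrac{\alpha}{3}\log N} = o_{\Pr}(1)$, which combined with \eqref{eq:xi1N via L} gives the claim.

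The main obstacle — and the place requiring care — is the variance bookkeeping near the spectral edge in step (2). For $i$ close to $N$, $r_i \to 1$, $m_i \to 1$, $\gamma_i \to 1$, so the geometric decay of the weights degenerates and one must track $1 - \gamma_k \asymp \sqrt{1 - k/N\theta_N^2} \asymp \sqrt{(N-k)/N}$ precisely; the effective number of innovations contributing to $L_i$ is of order $N/\sqrt{N - i}$, and $\E L_i^2$ is correspondingly of order (number of terms)$\times$(per-term variance) $\asymp \frac{N}{\sqrt{N-i}}\cdot\frac{1}{N\sqrt{(N-i)/N}} \asymp \frac{1}{N-i}$, summably over $i \in (N_0, N)$ giving $\sum_i \E L_i^2 \asymp \log(N/N_0^c)$-type quantities; one must verify this total is $o(\log N)$, using $N - N_0 = 2N^{1/3}\log^3 N$. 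This boils down to a careful but routine estimate of $\sum_{i=N_0+1}^{N}\frac{1}{\sqrt{N-i}}$ and related sums, plus controlling the cross-covariances $\E L_i L_{i'}$ which, because the $L_i$ share innovations, do not vanish — but they are dominated via Cauchy–Schwarz by $\sqrt{\E L_i^2 \,\E L_{i'}^2}$, so the second moment bound on $R_N = \sum L_i$ follows from $(\sum_i \sqrt{\E L_i^2})^2$, and one checks this is $o(\log N)$. I would lean on the estimates already developed in Section 4 of \cite{johnstone2020logarithmic} for the behavior of $r_i, m_i, \gamma_i$ and the orders of $\alpha_i, \beta_i$ to keep this short.
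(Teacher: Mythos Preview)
Your overall strategy---truncate $\sum_i L_i$ at $N_0 = N - 2N^{1/3}\log^3 N$ and bound the tail in $L^2$---is sound, and your observation that $X_N := \sum_{i\le N_0}L_i$ is automatically measurable with respect to $(a_i,b_{i-1})_{i\le N_0}$ is correct. But your proposed variance bound on $R_N = \sum_{i>N_0}L_i$ via Cauchy--Schwarz, i.e.\ $\E R_N^2 \le (\sum_{i>N_0}\sqrt{\E L_i^2})^2$, is far too crude to close the argument. Even with the correct order $\E L_i^2 \asymp (N(N-i))^{-1/2}$ (your heuristic ``effective number of innovations $\asymp N/\sqrt{N-i}$'' is off---it is $\sqrt{N/(N-i)}$, whence $\E L_i^2\asymp 1/\sqrt{N(N-i)}$, not $1/(N-i)$), one gets
\[
\sum_{i>N_0}\sqrt{\E L_i^2}\ \asymp\ N^{-1/4}\sum_{s=1}^{2N^{1/3}\log^3 N} s^{-1/4}\ \asymp\ \log^{9/4}N,
\]
which is much larger than the required $o(\sqrt{\log N})$.

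The gap is that you treat the $L_i$ as correlated black boxes, when in fact each is \emph{linear} in the independent innovations $\xi_j$, so $R_N=\sum_j w_j\xi_j$ for deterministic weights $w_j$, and $\E R_N^2=\sum_j w_j^2\,\E\xi_j^2$ exactly---no Cauchy--Schwarz needed. The paper exploits this directly: it first rewrites $\sum_{i=1}^N L_i=\sum_{j=1}^N g_{j+1}\xi_j$ with $g_{j+1}=1+\gamma_{j+1}+\gamma_{j+1}\gamma_{j+2}+\cdots$, then sets $X_N=\sum_{j\le m}g_{j+1}\xi_j$ (a different $X_N$ from yours, truncated in the innovation index). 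The remainder $\sum_{j>m}g_{j+1}\xi_j$ has independent summands, and using $g_j<2/(r_j-1)$ together with an integral estimate gives $\sum_{j>m}g_{j+1}^2=O(N\log\log N)$, hence $\E R_N^2=O(\log\log N)$. Your $X_N$ would also work with this direct variance computation, but the paper's choice makes the bookkeeping cleaner.
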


\begin{proof}
Let us rewrite the sum from \eqref{eq:xi1N via L} in the following form:
\[
    \sum_{i = 1}^{N} L_{i}  = \sum_{i = 1}^{N} (\xi_{i} + \gamma_{i} \xi_{i-1} + \dots + \gamma_{i} \dots \gamma_{2} \xi_{1}) = \sum_{i = 1}^{N} g_{i + 1} \xi_{i},
\]
where \( g_{i} = 1 + \gamma_{i} + \dots + \gamma_{i} \dots \gamma_{N} \).
Set \(m = \lfloor N - 2N^{1/3} \log^3 N \rfloor \) and consider 
$
    X_N = \sum_{i = 1}^{m} g_{i + 1} \xi_{i} \, ,
$
so that it only depends on \( (\xi_i)_{i \leq m}, \) and hence on \( (a_i)_{i \leq m}, (b_{i})_{i \leq m-1}\). Since \( \xi_{i} \) are independent and centred,
\begin{align*}
    \E \bigg( \sum_{i = 1}^{N} L_i - X_N \bigg)^{2} = \E \bigg( \sum_{i > m} g_{i + 1} \xi_{i} \bigg)^{2} \leq \left(\max_{i} \E \xi_{i}^{2}\right) \sum_{i > m} g_{i + 1}^{2}
    = O(N^{-1}) \sum_{i > m} g_{i + 1}^{2} \, .
\end{align*}
By Lemma~{6} in {\cite{johnstone2020logarithmic}} we have, for \( N \) large enough,
\[
    g_{i} < \frac{r_i}{r_{i} - 1} \leq \frac{2}{r_i - 1} \, ,
    \qquad
    i = 1, \dots, N \, .
\]
Further, since \( (r_{i} - 1)^{-1} \) is an increasing sequence, we have
\begin{align*}
    \sum_{i > m} g_{i + 1}^{2} &< \sum_{i = m + 1}^{N + 1} \frac{4}{(r_{i + 1} - 1)^{2}} = 4 \sum_{i = m + 1}^{N + 1} \left(1 - \theta_N^{-2} \frac{i}{N} \right)^{-1} < 4 N \int_{\frac{m + 1}{N}}^{\frac{N + 2}{N}} (1 - \theta_N^{-2} x)^{-1} \diff x \, ,
\end{align*}
where we use the inequality \( \frac{N + 2}{N} \theta_N^{-2} < 1 \),
which holds for large enough \( N \).
Since \( m + 1 > N - 2N^{1/3} \log^3 N \), the integral is bounded
above by
\begin{equation*}
  \int_{1 - 2N^{-2/3} \log^3 N}^{1 + 2N^{-1}} (1 - \theta_N^{-2}
  x)^{-1} \diff x
   = \theta_N^2 \log \frac{2\log^3 N + \bar{\sigma}_N + O(N^{-1/3})}{\bar{\sigma}_N + O(N^{-1/3})} 
     = O\left(\log\log N\right).
\end{equation*}
where we used \( \theta_N = 1 + N^{-2/3} \bar{\sigma}_N / 2\)
and $1 - \theta_N^{-2}(1+N^{-2/3}v) = N^{-2/3}[\bar{\sigma}_N - v
  +O(N^{-1/3})]$ for $v = O(\log^3 N)$.
Collecting  all together, we get that
\[
    \E \bigg( \sum_{i = 1}^{N} L_{i} - X_{N} \bigg)^{2} =
    O\left(\log\log N\right). 
\]
\textcolor{black}{Combining this with \eqref{eq:xi1N via L} yields the lemma.}
\end{proof}

\textit{Representation for $\xi_{2N}$ (the largest eigenvalue).}
Since the entries of the tridiagonal matrix \eqref{tridiag_M_N} become larger towards the bottom right corner, we expect that the largest eigenvalue \( \lambda_1 \) does not depend too much on the upper values. \cite{edelman2005random} use a heuristic argument and numerical evidence to suggest that the bottom \( 10N^{1/3} \times 10 N^{1/3} \) minor's largest eigenvalue is a good approximation of \( \lambda_1 \). For our purpose, it is sufficient to multiply \(N^{1/3}\) by  \textcolor{black}{\(2\log^3 N\)} instead of \(10\). We prove the following lemma.

\begin{lemma}\label{tilde_lambda}
Let \( Y_N \) be the largest eigenvalue of the bottom-right minor of \(M_N\) of size \( l > 2 N^{1/3} \log^{3} N \). Then for any $K>0$,
\[
    |\lambda_{1} - Y_N| = O_{\Pr}(N^{-K}) \, .
\]
\end{lemma}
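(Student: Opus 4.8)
The plan is to prove a two-sided bound. One inequality is deterministic: writing $M_N'$ for the bottom-right $l\times l$ minor and $v'$ for a top unit eigenvector of $M_N'$, the zero-extension $\tilde v'\in\R^N$ satisfies $\langle\tilde v',M_N\tilde v'\rangle=\langle v',M_N'v'\rangle=Y_N$, since the only entry of $M_N$ coupling the first $N-l$ and last $l$ coordinates, $b_{N-l}/\sqrt N$, multiplies a vanishing coordinate of $\tilde v'$; hence $\mu_1=\max_{\|w\|=1}\langle w,M_Nw\rangle\ge Y_N$. For the reverse bound let $v$ be a top unit eigenvector of $M_N$ and split $v=v^{\mathrm{lo}}+v^{\mathrm{hi}}$ along the first $N-l$ and last $l$ coordinates. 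Expanding $\mu_1=\langle v,M_Nv\rangle$ over this splitting and using $\langle v^{\mathrm{hi}},M_Nv^{\mathrm{hi}}\rangle\le Y_N\|v^{\mathrm{hi}}\|^2\le Y_N$ on the overwhelming-probability event $\{Y_N\ge 0\}$ (indeed $Y_N\ge 1$ w.o.p., as seen from the normalized all-ones test vector for $M_N'$ and $\chi^2$-concentration of the $b_i$), together with $|\langle v^{\mathrm{lo}},M_Nv^{\mathrm{hi}}\rangle|\le (b_{N-l}/\sqrt N)\|v^{\mathrm{lo}}\|$ and $|\langle v^{\mathrm{lo}},M_Nv^{\mathrm{lo}}\rangle|\le\|M_N\|_{\mathrm{op}}\|v^{\mathrm{lo}}\|^2$, I obtain $\mu_1\le Y_N+C\|v^{\mathrm{lo}}\|$ with $C=O_{\Pr}(1)$. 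Thus the lemma reduces to the localization estimate $\|v^{\mathrm{lo}}\|=O_{\Pr}(N^{-K})$ for every $K$: the top eigenvector of $M_N$ has super-polynomially small mass on coordinates $\le N-l$.

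To prove this I would run a discrete Agmon / weighted-energy estimate. Put $A=\sqrt N M_N$, $\Lambda=\sqrt N\mu_1$, let $m_0=N-\lfloor N^{1/3}\log^3 N\rfloor$, and let $B_m$ be the top-left $m\times m$ principal submatrix of $A$ (again a Gaussian tridiagonal ensemble of the same type, up to reversal of the index). The probabilistic input is a single spectral gap: on a w.o.p.\ event, $g_0:=\Lambda-\mu_1(B_{m_0-1})\ge \tfrac12 N^{-1/6}\log^3 N$. This follows from $\mu_1(B_{m_0-1})\le 2\sqrt{m_0-1}+N^{-1/6}\log N$ (soft-edge rigidity / Tracy--Widom upper tail for one tridiagonal ensemble of size $m_0-1$), from $\Lambda\ge 2\sqrt N-N^{-1/6}\log N$ (lower tail), and from $2(\sqrt N-\sqrt{m_0-1})\sim N^{-1/6}\log^3 N$. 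Now take the weight $\psi_i=\exp\{\rho(m_0-i)_+\}$ with slope $\rho=c_1N^{-1/3}\log^{3/2}N$, $c_1$ a small absolute constant, and set $w=\psi v-v$, which is supported on $\{1,\dots,m_0-1\}$. Since $(\Lambda-A)v=0$, expansion gives $\langle\psi v,(\Lambda-A)\psi v\rangle=\langle w,(\Lambda-A)w\rangle\ge g_0\|w\|^2$ (as $w$ is block-supported, $\langle w,Aw\rangle\le\mu_1(B_{m_0-1})\|w\|^2$). On the other hand the summation-by-parts identity $\langle\psi v,(\Lambda-A)\psi v\rangle=\sum_i b_i(\psi_{i+1}-\psi_i)^2v_iv_{i+1}$, combined with $b_i\le 2\sqrt N$, $|\psi_{i+1}-\psi_i|\le\rho\psi_i$, $\psi_i\le 2\psi_{i+1}$, and the bound $\psi_i^2v_i^2\le 4w_i^2$ wherever $\psi_i\ge 2$ (with $\sum v_i^2=1$ controlling the remaining $O(1)$-weight terms), shows $|\langle\psi v,(\Lambda-A)\psi v\rangle|\le C'\sqrt N\,\rho^2(\|w\|^2+1)$. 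Choosing $c_1$ so that $C'\sqrt N\rho^2\le g_0/2$ forces $\|w\|^2\le 1$ w.o.p.

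It remains to read off the decay. From $\|w\|^2\le 1$ we get $|v_i|=|w_i|/(\psi_i-1)\le 2\|w\|\,e^{-\rho(m_0-i)}$ for $i\le m_0-1$, hence $\|v^{\mathrm{lo}}\|^2=\sum_{i\le N-l}v_i^2\le 4N\,e^{-2\rho(m_0-(N-l))}$; and since $l>2N^{1/3}\log^3 N$ gives $m_0-(N-l)=l-\lfloor N^{1/3}\log^3 N\rfloor\ge N^{1/3}\log^3 N$, we obtain $\|v^{\mathrm{lo}}\|^2\le 4N\,e^{-2c_1\log^{9/2}N}=O(N^{-K})$ for every fixed $K$. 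Together with the first paragraph this proves the lemma.

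I expect the main obstacle to be the second paragraph, i.e.\ getting the weighted-energy estimate to close with the \emph{right} slope. The slope $\rho\asymp N^{-1/3}\log^{3/2}N$ (so that $\rho^2$ is comparable to $g_0/\sqrt N$) is what the soft edge forces: a plain Combes--Thomas resolvent bound applied to the block $\{1,\dots,N-l\}$ would give decay rate only $\asymp(\text{gap})/(\text{hopping})\asymp N^{-2/3}\log^3 N$ per coordinate, hence a total factor $e^{-O(N^{-1/3}\log^6 N)}\to 1$, which is useless — it is only the energy argument, where the conjugation error is quadratic in the slope while the coercivity is linear, that extracts the extra square root and yields super-polynomial decay. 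The remaining inputs — the deterministic lower bound, $\chi^2$-concentration of the $b_i$, $\|M_N\|_{\mathrm{op}}=O_{\Pr}(1)$, and the soft-edge Tracy--Widom tail bounds for $\mu_1(B_{m_0-1})$ and $\Lambda$ — are standard and combine by routine estimates.
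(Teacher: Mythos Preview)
Your proof is correct and takes a genuinely different route from the paper. Both arguments share the same overall skeleton: the lower bound $\mu_1\ge Y_N$ is by a test vector, and the upper bound is reduced to showing that the principal eigenvector of $M_N$ has super-polynomially small mass on the first $N-l$ coordinates. Where the two diverge is in how that localization is proved.

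The paper works directly with the three-term eigenvector recursion $b_{i-1}v_{i-1}+(a_i-\sqrt N\mu_1)v_i+b_iv_{i+1}=0$. It renormalizes via $u_i=v_i/\prod_{j<i}(\tilde r_j/\tilde b_j)$ with $\tilde r_j=1+\sqrt{1-(j-1)/N}$, and then proves by a delicate induction (reusing the machinery of the log-determinant CLT) that the ratios $R_i=u_{i+1}/u_i-1$ satisfy $\max_{i\le N-N^{1/3}\log^3 N}|R_i|=o_{\Pr}(N^{-1/3})$. The decay of $|v_i/v_{i+k}|$ then comes from the deterministic product $\prod_j \tilde b_j/\tilde r_j$, using $\tilde r_j\ge 1+N^{-1/3}\log^{3/2}N$ in the relevant range.

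Your Agmon weighted-energy argument replaces all of that recursion analysis by a single spectral-gap input, $g_0=\Lambda-\mu_1(B_{m_0-1})\gtrsim N^{-1/6}\log^3 N$, plus the commutator identity $\langle\psi v,(\Lambda-A)\psi v\rangle=\sum_i b_i(\psi_{i+1}-\psi_i)^2 v_iv_{i+1}$, which you verify correctly. The point you stress, that the energy method yields slope $\rho\asymp\sqrt{g_0/\sqrt N}\asymp N^{-1/3}\log^{3/2}N$ rather than the Combes--Thomas rate $g_0/\sqrt N$, is exactly what makes it work; and it recovers the same effective decay rate that the paper extracts from $\tilde b_j/\tilde r_j$. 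What you gain is conceptual economy and independence from the log-determinant machinery; what the paper gains is that it needs nothing beyond the ratio analysis already developed elsewhere in the paper, so no separate edge tail bounds for $\mu_1(B_{m_0-1})$ are invoked. For the $O_{\Pr}$ conclusion you only need your gap event a.a.s., which follows already from Tracy--Widom convergence for the two ensembles; the stronger w.o.p.\ claims you mention are true but not needed.
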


\begin{proof} 
Let $ Y_N:=\tilde{\lambda}_1$ be the largest eigenvalue of
\begin{equation*}
  \sqrt{N} \widetilde{M}_{l,N} =
  \begin{pmatrix}
    0 & 0 \\
    0 & T_{l,N}
  \end{pmatrix}
\end{equation*}
where $T_{l,N}$ is the block of $\sqrt{N} M_N$ in (\ref{tridiag_M_N})
formed by the rows and columns from $N-l+1$ to $N$.
Then $ \lambda_1 \geq \tilde{\lambda}_1 $ and it is sufficient to bound the difference \( \lambda_1 - \tilde{\lambda}_{1} \) from above. 

Let $ v $ be the normalized \textcolor{black}{principal} eigenvector of the original matrix $M_N$, so that $ v^{\top} M_N v = \lambda_1 $, and we also have $ \tilde{\lambda}_1 \geq v^{\top} \widetilde{M}_{l,N} v $.
We  then have
\begin{align}
    \lambda_{1} - \tilde{\lambda}_{1} &\leq \lambda_{1} - v^{\top} \widetilde{M}_{l,N} v = v^{\top} (M_N - \widetilde{M}_{l,N}) v \notag \\
    & = \sqrt{\frac{N-l}{N}} v_{:N-l}^{\top} M_{N-l} v_{:N-l} + \textcolor{black}{\frac{{2 b_{N-l}}}{\sqrt{N}}} v_{N-l + 1} v_{N-l} \, ,\label{lam1-lam1tilde}
\end{align}
where \( v_{(:N-l)} = (v_1, \dots, v_{N-l})^{\top} \) and
$\sqrt{N-l}M_{N-l}$ is an instance of (\ref{tridiag_M_N}).
\textcolor{black}{Note that $\norm{M_{N-l}}=O_{\Pr}(1)$ and \( b_{N-l}/\sqrt{N} = O_{\Pr}(1)\).}

\textcolor{black}{A proof of the following auxiliary lemma is given in  \cref{section_main_eigenvector}. The lemma controls the initial $N-2N^{1/3}\log^3N$ components of the principal eigenvector $v$.} 

\begin{lemma}\label{lemma_main_eigenvector}
Let \( v \) be a principal eigenvector of \( M_N\), \textcolor{black}{standardized to have unit Euclidean norm.} Then for any \( D > 0\)
\[
    \max_{i \leq N - 2N^{1/3} \log^3 N} \textcolor{black}{|v_{i}|} = O_{\Pr}(N^{-D}) \, .
\]
\end{lemma}

Since $ l > 2N^{1/3} \log^3 N$, we have $ \max_{i \leq N-l + 1} |v_i| = O_{\Pr}(N^{-D}) $. Therefore, \( \| v_{:N-l}\|^2 \leq N O_{\Pr}(N^{-2D}) \) and \( \max\{|v_{N-l}|,|v_{N-l+1}|\} \leq O_{\Pr}(N^{-D}) \). \textcolor{black}{Hence, from \eqref{lam1-lam1tilde},}
\[
    \lambda_{1} - Y_N \leq O_{\Pr}(1) \times O_{\Pr}(N^{-2D + 1}) + O_{\Pr}(1) \times O_{\Pr}(N^{-D}) \leq O_{\Pr}(N^{-D + 1}) \, ,
\]
where taking \( D = K+5/3 \) yields \cref{tilde_lambda}.
\end{proof}
This completes the proof of \cref{independence} for G(O/U)E with
$J=0$.

\bigskip
\textbf{Acknowledgements} \ We are grateful to the reviewers for
comments leading to improvements in presentation. 
The first and fourth authors were supported in part by NSF grant DMS 1811614.

\section*{Declarations}

\textbf{Data Availability} \ Data sharing not applicable to this article as no datasets were generated or analyzed during the current study.

\bigskip
\textbf{Conflict of Interest}  The authors have no relevant financial or non-financial interests to disclose. 

\printbibliography
\newpage
\begin{center}
  \textsc{Appendix}
\end{center}

\fix{This Appendix contains the proofs for Lemmas \ref{lemma three points}-\ref{lemma
  lambda1statistic} and (the rest of) Proposition \ref{independence}.
Section \ref{sec:proof key lemmas} does Gaussian cases. Key tools
are bounds for the one-point function: in the bulk sharp uniform order
$N^{-1}$ bounds due to G\"otze and Tikhomirov \cite{Gotze2005} for the error in
approximation by the semi-circle, and at the edge, bounds based on
Tracy-Widom asymptotics.
Section \ref{sec:wigner} deals with the Wigner case, mostly by Lindeberg
swapping, using elaborations of now standard methods as organized 
in JKOP22.
Some of the technical ingredients are further deferred and collected in 
Section \ref{sec: appendix}.
}

\section{Proof of the key lemmas for \fix{two} G(O/U)E
  cases}
\label{sec:proof key lemmas}

In this section, we prove lemmas \ref{lemma three points}-\ref{lemma
  lambda1statistic} and \cref{independence}
\fix{in two specific cases}. 
  \fix{The first is when $\lambda_1,\dotsc,\lambda_N$ are the eigenvalues of a scaled G(O/U)E without any spike ($J=0$). To contrast this case with the general Wigner cases, we will denote eigenvalues of such special scaled G(O/U)E matrices $W_{J,N}=W_{0,N}=W_N$ as $\mu_1\geq\cdots\geq\mu_N$, instead of $\lambda_1\geq\dotsc\geq\lambda_N$.
In section \ref{sec:wigner}, we will extend the proof to general
sub-critically spiked Wigner matrices $W_{J,N}$ satisfying Assumption
W (Wigner case) by using the Lindeberg swapping technique. The
eigenvalues of such $W_{J,N}$ will be again denoted as
$\lambda_1\geq\cdots\geq\lambda_N$, as in previous sections.}

\fix{The second case is case G$^\omega$: for clarity we write
  $\lambda_1^{(\omega)},\dotsc,\lambda_N^{(\omega)}$ for the eigenvalues
  of a critically spiked scaled G(O/U)E.}

\subsection{\textcolor{black}{Some useful tools}}
\textcolor{black}{ Let us first describe two important background results that we are going to use in the Gaussian part of the proof.}  

\smallskip
\textbf{One-point correlation function.} \
Let $\rho_N$ be the level density or one-point function of \textcolor{black}{scaled} GUE.
Then the  expectation of a linear spectral statistic
is given by
\begin{equation}
  \label{eq:linear-stat}
  \E \Big[ N^{-1} \sum_{i=1}^N f(\mu_i) \Big] = \int f(\mu)
    \rho_N(\mu) \diff \mu. 
\end{equation}

A key tool in approximating such expectations will be a uniform bound,
due to G\"otze and Tikhomirov, for 
the deviation of the one-point function in GUE 
from the semicircle density
$p_{\rm SC}(x) = (2\pi)^{-1} \sqrt{4-x^2} \mathbf{1}_{|x| \leq 2}$.
Indeed,
\cite[Theorem 1.2]{Gotze2005}
show the existence of
absolute constants $\gamma, C > 0$ such that for all $|x| \leq 2 -
\gamma N^{-2/3} $,
\begin{equation}\label{goetze_tihkomirov1}
    |\rho_{N}(x) - p_{SC}(x)| \leq \frac{C}{N(4 - x^2)} \, .
\end{equation}
In addition, the one-point function decays at least exponentially at
the edge:
for all $ s > -\gamma $, for large enough $N$ (see \cref{Johnstone Ma
  derivation}),
\begin{equation}\label{johnstone_ma1}
    \rho_{N}(2 + sN^{-2/3}) \leq C(\gamma) N^{-1/3} e^{-2s}\, .
\end{equation}
A similar bound holds at the
negative edge, by symmetry. Corresponding bounds also hold
for $p_{SC}$.

\smallskip
\textbf{Comparing GOE with GUE.} \ 
Forrester and Rains \parencite*{forrester2001inter} found a relation between the eigenvalues of GOE
and GUE that can be used to compare linear statistics from the two
ensembles.
Let \textcolor{black}{$\tilde{Z}_\alpha$ denote scaled} $N \times N$ GUE and GOE for $\alpha = 1,2$
respectively.
Given $f: \R \to \R$, let $f(\tilde{Z}_\alpha) = \sum_{i=1}^N
f(\mu_{\alpha,i})$, \textcolor{black}{where $\mu_{\alpha,i}$ are the eigenvalues of $\tilde{Z}_\alpha$}, and let $\TV(f)$ denote the total variation of
$f$. 
In \cite[][\textcolor{black}{Lemma 33} and Corollary 34]{johnstone2020logarithmic} it
is shown that
\begin{align}
  \label{eq:compare-means}
  | \E f(\tilde{Z}_1) - \E f(\tilde{Z}_2)|
  & \leq O(\TV (f)) \\
  \Var f(\tilde{Z}_2)
  & \leq 2 \Var f(\tilde{Z}_1) + 2 \TV^2(f)
    \label{eq:compare-vars}
\end{align}
Also, if $f_N$ is a series of functions such that
\begin{equation}
  \label{eq:GUE-comp}
f_N\left(\tilde{Z}_1\right)=a_N+O_{\Pr}(b_N),
\end{equation}
for some sequences $a_N$ and $b_N$, then,
\begin{equation}
  \label{eq:GOE-comp}
f_N\left(\tilde{Z}_2\right)
  = a_N+O_{\Pr}(b_N+\mathrm{TV}(f_N)).
\end{equation}

\subsection{Proof of lemma~\ref{lemma three points} for G(O/U)E}\label{sec: proof three poins}

Part (i) is \fix{shown} in the G(O/U)E cases with $J=0$
\textcolor{black}{and $J=1-\omega N^{-1/3}$} \fix{in the references
  already cited.} 

Part (ii)  follows e.g.~from the convergence of $N^{2/3}(\mu_1-2)$
to $\TW_{2/\alpha}$. \textcolor{black}{Its equivalent for $\lambda_1^{(\omega)}$ follows from $N^{2/3}(\lambda_1^{(\omega)}-2)=O_\Pr(1)$, which is a consequence of part (i).}

For GUE, part (iii)  follows from the one-point function decay bound
\eqref{johnstone_ma1} and \eqref{eq:linear-stat} applied to the
counting function statistic built from $f_N(\mu) = \ind \{ \mu
\geq 2 - x N^{-1/3} \}$. The extension to GOE follows from the
comparison bound \eqref{eq:compare-means}. \textcolor{black}{The equivalent of part (iii) for $\lambda_j^{(\omega)}$ follows by interlacing: $\#\left\{j:\; \lambda_j^{(\omega)} \geq 2-xN^{-2/3}\right\}$ and $\#\left\{j:\; \mu_j \geq 2-xN^{-2/3}\right\}$ differ by at most one.}

To see that part (iv) holds, consider the \textcolor{black}{stochastic
  Airy} operator\footnote{ \textcolor{black}{We give the definition in
  \cite{anderson2010introduction} (making our replacement $\alpha =
  2/\beta$), who use the negative of that of \cite{BloVirI}. Thus
  $\mathbf{H}_\alpha$ here corresponds to
  $-\mathcal{H}_{2/\alpha,\omega}$ in \cite{BloVirI}.}}
\[
\mathbf{H}_\alpha=\frac{\diff^2}{\diff x^2}-x+\sqrt{2\alpha}B^{\prime}_{x},
\]
where $B^{\prime}_{x}$ is the \textcolor{black}{``derivative''} of the Brownian motion on $(0,\infty)$, and the operator acts on \textcolor{black}{a} 
Hilbert space $\mathcal{L}_{*}$, \textcolor{black}{obtained from smooth continuous functions supported on $(0,\infty)$ via completion with respect to the inner product
\[
\langle f,g\rangle_{*}=\int_0^{\infty}f'(x)g'(x)\mathrm{d}x+\int_0^\infty(1+x)f(x)g(x)\mathrm{d}x.
\]}
\textcolor{black}{Also let $\langle f, g \rangle_2 = \int_0^\infty f(x) g(x)
\mathrm{d}x$ and $\bar{B}_x = \int_x^{x+1} B_y \mathrm{d}y$
and $\tilde{B}_x = B_x - \bar{B}_x$. Then $\mathbf{H}_\alpha$ is
defined on $f \in \mathcal{L}_{*}$ through the quadratic form
\begin{equation*}
  \langle f, \mathbf{H}_\alpha f \rangle_2 
  = \| f' \|_2^2 + \| \sqrt{x} f(x) \|_2^2
  - \sqrt{2 \alpha} \big[ \langle f, \bar{B}_x' f \rangle_2
    - 2 \langle f', \tilde{B} f \rangle_2 \big].
\end{equation*}
see pp.~308--311 in \cite{anderson2010introduction}.}
The following result is theorem 4.5.42 in \cite{anderson2010introduction} for the special case of just the two top eigenvalues,
\begin{equation}
\label{eq:top-eigs-joint-convergence}
    \left( N^{2/3} (\mu_1 - 2), N^{2/3} (\mu_2 - 2) \right) \drightarrow (\Lambda_1, \Lambda_2) \, ,
\end{equation}
where \(\Lambda_1, \Lambda_2\) are the top two eigenvalues of random operator $\mathbf{H}_{\alpha}$. In addition, in lemma~{4.5.47} of \cite{anderson2010introduction} it is shown that the operator has simple spectrum with probability one. This implies (iv). \textcolor{black}{The equivalent of (iv) for $\lambda_1^{(\omega)}-\lambda_2^{(\omega)}$ also holds. Indeed, from part (i), we have $N^{2/3}(\lambda_1^{(\omega)}-\lambda_2^{(\omega)})\drightarrow \Lambda_1-\Lambda_0=\Theta_\Pr(1)$ since the spectrum of 
\textcolor{black}{$\mathcal{H}_{2/\alpha,\omega}$} is simple a.s.}


For part (v),
\textcolor{black}{Let $\mathcal{N}_{b_N} = \#\{j : \mu_j > 2 - b_N
  N^{-2/3}\}$ with $b_N\rightarrow\infty$, so that $b_N = O(N^\epsilon)$ for all $\epsilon>0$. 
Lemmas 2.2 and 2.3 of \cite{Gustavsson2005} yield that, in the GUE case,
\begin{align*}
  \E \mathcal{N}_{b_N}
  & = \frac{\textcolor{black}{2}}{3 \pi} b_N^{3/2} + O(1) \\
  \Var \mathcal{N}_{b_N}
  & = \frac{3}{4 \pi^2}(\log b_N - \log 2)(1+o(1))
    \lesssim \log b_N.
\end{align*}}
Since the function $f_N(\mu) = \ind[\mu \geq  2 - b_N N^{-2/3}]$ has
$\TV(f) = 1$, 
\cref{eq:compare-means,eq:compare-vars} yield mean and variance
bounds of the same order in the GOE case. 
From Chebychev's inequality, if $\kappa < \kappa_0 = \textcolor{black}{2}/(3\pi)$, then
in both cases
\begin{equation*}
  \Pr \{ \mathcal{N}_{b_N} \leq \kappa b_N^{3/2} \}
  \lesssim \frac{\log b_N}{(\kappa_0 - \kappa)^2 b_N^3}
  \to 0.  \qedhere
\end{equation*}
\textcolor{black}{The equivalent of (v) for $\lambda_j^{(\omega)}$ immediately follows by interlacing 
\[
\#\left\{j:\; \lambda_j^{(\omega)} \geq 2-xN^{-2/3}\right\}\geq\#\left\{j:\; \mu_j \geq 2-xN^{-2/3}\right\}.
\]}

\subsection{Proof of lemma \ref{lemma derivatives of G}  for G(O/U)E}
\label{sec: proof lemma deriv G}

First, recall that for $l \geq 1$,
\begin{equation} \label{eq:Gl}
  G^{(l)}(\gamhat)
  = (1 + \sgn(b) b_N N^{-1/3}) \delta_{l1}
    +\frac{c_l}{N} \sum_{j=1}^N (\gamhat - \mu_j)^{-l},
\end{equation}
where \textcolor{black}{$b_N = |b| \sqrt{\log N}$}, $\delta_{l1} = 1$ if $l=1$ and $0$ otherwise,
and $c_l = (-1)^l(l-1)!$.
Henceforth, we write $\gamhat = 2 + \epsilon_N$, with $\epsilon_N =
b_N^2 N^{-2/3}$. 
We also set $\eta = \eta_N = \epsilon_N/2$ and consider truncated functions\footnote{The expression $f^{r}(x)$ is
  short for $(f(x))^r$, while $f^{(r)}(x)$ denotes $r$th order derivative.}
\begin{equation*}
    s_{N}(l) =  \frac{1}{N} \sum_{j = 1}^{N}
    f_{\eta}^l(\hat{\gamma} - \mu_j), 
    \qquad
    f_{\eta}(x) = x^{-1} \boldsymbol{1}_{|x| > \eta} \, .
\end{equation*}
Since \textcolor{black}{by \cref{lemma three points} (ii)} $\Pr(\mu_1 > 2 + \epsilon_N/2) \to 0$ as $N \to \infty$, we
have a.a.s.
\begin{equation*}
  G^{(l)}(\gamhat)
  = (1+\sgn(b) b_N N^{-1/3})\delta_{l1} + c_l s_N(l).
\end{equation*}

Recall that if $X_N = c_N + o_{\Pr}(d_N)$ and $Y_N = X_N$ a.a.s., then
$Y_N = c_N + o_{\Pr}(d_N)$ also.
With these preparations, the proof of \cref{lemma derivatives of G} reduces to showing
that
\begin{equation} \label{eq:snlbd}
  s_N(l) =
  \begin{cases}
    1 - b_NN^{-1/3} + o_{\Pr}(N^{-1/3} b_N^{-1/2})  & l = 1 \\
    d_l (N^{1/3}/b_N)^{2l-3} (1+o_{\Pr}(1)) & l \geq 2,
  \end{cases}
\end{equation}
with $d_l =\frac{(2l-4)!}{(l-1)!(l-2)!2^{2l-3}}$ if $l\geq 2$. 

We consider first 
GUE, and
begin with the case $l \geq 2$, for which it will be enough to use
\begin{equation*}
  s_N(l) = \E s_N(l) + O_{\Pr}(\sqrt{\Var(s_N(l))}).
\end{equation*}
Let $\rho_N(\mu)$ denote the normalized one-point
correlation function. As in \eqref{eq:linear-stat}, 
we have

\begin{equation}
  \label{eq:ensl}
  \E s_N(l) = \int f_\eta^l(\gamhat-\mu) \rho_N(\mu) \diff \mu.
\end{equation}


To bound the error in replacing $\rho_N$ by $p_{\rm SC}$ in integrals
such as
\eqref{eq:ensl}, set $\delta_N = \gamma N^{-2/3}$ with $\gamma>0$,
 decompose $\R$ into
$I_N = [-2+\delta_N,2-\delta_N]$ along with
$J_N = (2-\delta_N,\infty)$ and $J_N^- = (-\infty, -2+\delta_N)$
and write
\begin{equation*}
  \int g \rho_N - g p_{\rm SC}
  = \int_{I_N} g(\rho_N - \psc)
     + \int_{J_N \cup J_N^-} g \rho_N 
     - \int_{J_N \cup J_N^-} g \psc. 
\end{equation*}
Integrals over $J_N$ may be bounded using \eqref{johnstone_ma1}:
\begin{equation}
  \label{eq:boundA}
  \int_{J_N} |g(\mu)| \rho_N(\mu) \diff \mu
   \leq  C_\gamma N^{-1} \sup_{\mu > 2 - \delta_N} |g(\mu)|,
\end{equation}
since $\int_{2-\delta_N}^\infty \rho_N(\mu) \diff \mu
\leq N^{-1} C(\gamma) \int_{-\gamma}^\infty e^{-2s} \diff s
= N^{-1} C_\gamma.$
The integral over $J_N^-$ is dealt with similarly, and so are the
integrals with respect to $p_{\rm SC}$.

The integrals over $I_N$ require the G\"otze-Tikhomirov bound
\eqref{goetze_tihkomirov1}. For later use,
we bring in a bounded function $\psi$ and set
  $a_\psi = \sup_{-1 \leq y \leq 0} |\psi(y)|$ and 
  $b_\psi = \sup_{ y \leq -1} |\psi(y)|$. \textcolor{black}{Let
  \[
  \mathcal{I}_N
    = \int_{I_N} (\gamhat-\mu)^{-r}
    \psi\Big(\dfrac{\mu-2}{\epsilon_N}\Big)
  [\rho_N(\mu) - \psc(\mu)] \diff \mu.
  \]}
 Then, for $C = C(\gamma)$ and any $r>0$,
  we have
  \begin{equation}
    \label{eq:boundB}
    |\mathcal{I}_N|
     \leq \frac{C}{N \epsilon_N^r} \left( a_\psi \log
    \frac{\epsilon_N}{\delta_N} + b_\psi \right)
  \end{equation}
  \textcolor{black}{for all sufficiently large $N$.}

  Indeed, on the interval $[-2+\delta_N,0]$ \textcolor{black}{the absolute value of} the integrand is bounded by
  $2^{-r} b_\psi C N^{-1} 2^{-1} (2+\mu)^{-1}$ and so \textcolor{black}{the absolute value of} the corresponding
  integral is   at most $b_\psi C N^{-1} \log \delta_N^{-1}$, \textcolor{black}{for all sufficiently large $N$}.
  Writing $\mathcal{I}_N'$ for the integral over $[0,2-\delta_N]$,
  setting $x = 2- \mu$, and noting that $\delta = \delta_N <
  \epsilon_N = \epsilon$, we have
  \begin{equation*}
    |\mathcal{I}_N'|
    \leq \frac{C}{N} \int_\delta^2 \frac{1}{(x+\epsilon)^r x}
     \vert\psi \Big(\frac{-x}{\epsilon}\Big)\vert \diff x
    \leq \frac{a_\psi C}{N \epsilon^r} \int_\delta^\epsilon \frac{ \diff x}{x}
    + \frac{2 b_\psi C}{N} \int_\epsilon^2 \frac{\diff
      x}{(x+\epsilon)^{r+1}}, 
  \end{equation*}
which yields the claimed bound.

Returning to the approximation of \eqref{eq:ensl} and setting
$g(\mu) = f_\eta^l(\gamhat - \mu)$, we have
$g(\mu) = O(\eta^{-l}) = O(\epsilon_N^{-l})$ in \eqref{eq:boundA},
and $\psi := 1, r = l$ in \eqref{eq:boundB}, so that with
$\epsilon_N/\delta_N = b_N^2/\gamma$,
\begin{equation} \label{eq:6.24}
  \E s_N(l)
  = \int (\gamhat - \mu)^{-l} \psc(\mu) \diff \mu
    +O\left(\frac{\log b_N}{N \epsilon_N^l}\right).
\end{equation}
To approximate the latter integral we use derivatives of 
$m_{\rm SC}(z) = \frac{1}{2} (-z + \sqrt{z^2 - 4})$, the Stieltjes
transform of the semi-circle distribution \textcolor{black}{(extended to real $z>2$)}.
Recalling that $\gamhat = 2 + \epsilon_N$, we write
\begin{align*}
  m_{\rm SC}(2+\epsilon)
  & = -1 + \epsilon^{1/2}(1+\epsilon/4)^{1/2} - \epsilon/2, \\
  m_{\rm SC}^{(l)}(2+\epsilon)
    & = m_l \epsilon^{1/2 - l}(1+O(\epsilon)) - \tfrac{1}{2} \delta_{l1}, \quad
     \quad  l \geq 1,
\end{align*}
where $m_l =\frac{(-1)^{l-1}}{2^{2l-1}}\frac{(2l-2)!}{(l-1)!}$.
We then have
\begin{equation*}
  \int (\gamhat - \mu)^{-l} \psc(\mu) \diff \mu
   = c_l^{-1} m_{SC}^{(l-1)}(\gamhat)
   = \begin{cases}
     1 - \epsilon_N^{1/2}+ O(\epsilon_N)  & l = 1 \\
     d_l \epsilon_N^{3/2 - l} -\frac{1}{2} \delta_{l2} +
     O(\epsilon_N^{5/2 - l}) \quad & l  \geq 2,
   \end{cases}
\end{equation*}
where $d_l = m_{l-1}/c_l =\frac{(2l-4)!}{(l-1)!(l-2)!2^{2l-3}}$ for $l\geq 2$ as claimed above.
The error term $O(\frac{\log b_N}{N \epsilon_N^l})$ in \eqref{eq:6.24}
dominates the term $O(\epsilon_N)$ for $l = 1$ and all but the leading
term for $l \geq 2$. Consequently
\begin{equation}  \label{eq:esnlapp}
  \E s_N(l)   =
   \begin{cases}
     1 - b_N N^{-1/3} + O(N^{-1/3} (\log b_N)/b_N^2)  & l = 1 \\
     d_l (N/b_N^3)^{2l/3-1} +
         O\left( (N/b_N^3)^{2l/3-1} (\log b_N)/b_N^3 \right)  \quad & l \geq 2.
   \end{cases}
\end{equation}

To bound the variances of $s_{N}(l)$ we use the following inequality
{\color{black}(see \cite{johnstone2020logarithmic}, \textcolor{black}{lemma 16})}
\begin{equation}\label{Iains inequality alt1}
    \Var\bigg( \frac{1}{N} \sum_{j = 1}^{N} f(\mu_j) \bigg) \leq \frac{1}{N} \int f^2(\mu) \rho_{N}(\mu) \diff\mu,
\end{equation}
which holds for GUE for arbitrary  \(f\) and \(N\), as long as the
integrals involved exist. In particular,
\begin{equation*}
\Var\left[ s_{N}(l)\right] \leq \frac{1}{N}\int  f_{\eta
  }^{2l} \left( 
\hat{\gamma}-\mu \right) \rho _{N}\left( \mu \right)
\diff\mu
  = O(N^{\frac{4l-6}{3}} b_N^{-4l+3})
\end{equation*}%
using \eqref{eq:esnlapp}.
This establishes \eqref{eq:snlbd} for $l \geq 2$.

For $l = 1$, however, this bound yields only error control at $O_{\Pr}(N^{-1/3}
b_N^{-1/2})$ in \eqref{eq:snlbd}.
To improve on the bound near the edge, we use the mesoscopic CLT of
Basor and Widom \cite{basor1999determinants}.
Let $c > 1$ and consider a $C^\infty$ partition of unity $\psi_1 +
\psi_2 = 1$ with $0 \leq \psi_i \leq 1$ and 
\begin{equation*}
\psi_{1}(x)=
\begin{cases}
  1 \quad & -c \leq x \leq 1/4 \\
  0 \quad & x \leq -2c \text{ and } x \geq 1/2.
\end{cases}
\end{equation*}%
We then have the decomposition $ s_{N}(1) = \bar{s}_1(c) +
\bar{s}_2(c)$, where 
\begin{equation*}
  \bar{s}_{i} =   \bar{s}_{i}(c) 
  =\frac{1}{N}\sum_{j=1}^{N}f_{\eta }\left( \hat{\gamma}
    -\mu _{j}\right) \psi _{i}( \Delta_N(\mu_j)),
  \qquad \Delta_N(\mu) = \frac{N^{2/3}(\mu -2)}{b_N^2}.
\end{equation*}

To show that $s_N(1) - \E s_N(1) = o_{\Pr}(N^{-1/3} b_N^{-1/2})$, we
use the following convergence criterion: if for each $c$ large, $X_N = Y_{N1}(c) + Y_{N2}(c)$ with $Y_{N1}(c) = o_{\Pr }(1)$ and $\Var Y_{N2}(c) \leq \textcolor{black}{c^{-1/2}}$ for $N >N(c)$, then $X_N = o_{\Pr }(1)$.

First, our previous tools suffice to bound fluctuations of the
$\bar{s}_2$ term. Indeed,
from \eqref{Iains inequality alt1} we have
\begin{equation*}
  \Var( \bar{s}_2(c))
   \leq \frac{1}{N} \int f_\eta^2(\gamhat-\mu)
   \psi_2^2(\Delta_N(\mu)) \rho_N(\mu) \diff \mu.
\end{equation*}
We approximate the integral by $\mathcal{J}_N = \int g(\mu)
\psc(\mu) \diff \mu$, using bounds \eqref{eq:boundA} with
$g(\mu) = f_\eta^2(\gamhat-\mu) \psi_2^2(\Delta_N(\mu)) =
O(\eta^{-2})$ and \eqref{eq:boundB} with $a_\psi = 0, b_\psi = 1$, 
so that the error is bounded in order by
\begin{equation*}
  N^{-1} \left[ \eta_N^{-2} + \epsilon_N^{-2} \right]
   \asymp N^{-1} \epsilon_N^{-2} \asymp N^{1/3} b_N^{-4}.
\end{equation*}
On the interval $[-2,2]$, we have $g(\mu) = (\gamhat-\mu)^{-2}
\psi_2^2(\Delta_N(\mu))$, which vanishes if $\Delta_N(\mu)
\geq -c$, so that
\begin{equation} \label{eq:c1bd}
  \mathcal{J}_N
  \leq \frac{1}{2\pi} \int_{-2}^{2-c \epsilon_N} (\gamhat-\mu)^{-2}
  \sqrt{4-\mu^2} \diff \mu
  \leq \frac{1}{\pi} \int_{c \epsilon_N}^4 x^{-3/2} \diff x
   \leq \frac{2}{\pi} (c \epsilon_N)^{-1/2}.
\end{equation}
Consequently for $N \geq N(c)$,
\begin{equation} \label{eq:vars2bar}
  \Var \bar{s}_2(c)
  \leq N^{-1} \left[ 2\pi^{-1} c^{-1/2} N^{1/3}b_N^{-1}
    + O(N^{1/3} b_N^{-4}) \right]
  \leq 4\pi^{-1} c^{-1/2} N^{-2/3} b_N^{-1}.
\end{equation}

Turning now to $\bar{s}_1(c)$, since $\psi_1(\Delta_N(\mu))=0$ for
$\mu > 2 + \epsilon_N/2 = \gamhat - \eta_N$, we have
\begin{equation*}
  \bar{s}_1 = N^{-1} \sum_{j=1}^N (\gamhat - \mu_j)^{-1}
  \psi_1(\Delta_N(\mu_j)). 
\end{equation*}
Let us rewrite
\begin{equation*}
  (\gamhat-\mu)^{-1} \psi_1(\Delta_N(\mu))
   = \epsilon_N^{-1} \varphi(\Delta_N(\mu))
\end{equation*}
where $\varphi \left( x\right) ={\psi _{1}(x)}/(1-x)$ is a Schwartz
function because $\supp \psi_1 = [-2c, \frac{1}{2}]$.
By \cite{basor1999determinants},
\begin{equation*}
  T_N = \sum_{j=1}^N \varphi(\Delta_N(\mu_j)) = \alpha_N + o_{\Pr}(1),
\end{equation*}
with
\begin{equation*}
  \alpha_N = \frac{b_N^3}{\pi} \int_0^\infty \sqrt{x} \varphi(-x) \diff x
        = \frac{N^{1/3} b_N^2}{\pi} \int_{-\infty}^2 (2-\mu)^{1/2}
        \epsilon_N^{-1} \varphi(\Delta_N(\mu)) \diff \mu.
\end{equation*}
Hence
\begin{equation*}
  \bar{s}_1 = \frac{T_N}{N^{1/3} b_N^2}
   = \int_{-\infty}^2 
   (\gamhat-\mu)^{-1} \psi_1(\Delta_N(\mu))
    \frac{\sqrt{2-\mu}}{\pi} \diff \mu
          +  o_{\Pr}\left(\frac{1}{N^{1/3} b_N^2} \right).
\end{equation*}
We also have
\begin{equation*}
  \E \bar{s}_1
   = \int_{-2}^2 (\gamhat-\mu)^{-1}
  \psi_1(\Delta_N(\mu)) \psc(\mu) \diff \mu
  + O(N^{-1/3} b_N^{-2} \log b_N),
\end{equation*}
where again we approximate the first integral by 
$\mathcal{J}_{N2} = \int g_2(\mu)
\psc(\mu) \diff \mu$, using bound \eqref{eq:boundA} with
$g_2(\mu) = (\gamhat-\mu)^{-1} \psi_1(\Delta_N(\mu)) =
O(\epsilon_N^{-1})$ and bound \eqref{eq:boundB} with $a_\psi = b_\psi
= 1$, so that the error is bounded in order by 
$ N^{-1} \epsilon_N^{-1} + N^{-1} \epsilon_N^{-1}
  \log(\epsilon_N/\delta_N)
  = O(N^{-1/3} b_N^{-2} \log b_N).$

Since, \textcolor{black}{$\psi_1(\Delta_N(\mu))=0$  for $\mu\leq 2-2c\epsilon_N$} 
and $\gamhat > 2$, the leading term of
$\bar{s}_1 - \E \bar{s}_1$ is bounded by
\begin{equation*}
  \begin{split}
  \int_{2-2c \epsilon_N}^2  \bigg( \frac{\sqrt{2-\mu}}{\pi} 
    & - \frac{\sqrt{4-\mu^2}}{2\pi} \bigg)
    \frac{\diff  \mu}{2-\mu}  \\
    & = \frac{1}{\pi} \int_0^{2c \epsilon_N} \left[1 - \left(1 -
      \frac{x}{4}\right)^{1/2} \right] \frac{\diff x}{\sqrt{x}}
    = O((c \epsilon_N)^{3/2})
    = O(c^{3/2} b_N^3/N).
  \end{split}
\end{equation*}

Therefore, combining the error terms, we obtain for all $c > 1$
\begin{equation*}
  \bar{s}_1(c) - \E \bar{s}_1(c)
    = O_{\Pr}(N^{-1/3} b_N^{-2} \log b_N).
\end{equation*}
Together with \eqref{eq:vars2bar} and \textcolor{black}{the convergence criterion described above},
we conclude that
$s_N(1) - \E s_N(1) = o_{\Pr}(N^{-1/3} b_N^{-1/2})$.

For the GOE case 
we use
G(O/U)E comparison for linear statistics, cf
\eqref{eq:GUE-comp}--~\eqref{eq:GOE-comp}. 
We apply this to $f_N(x) = N^{-1} f_\eta^l(\gamhat - x)$, for which
$\mathrm{TV}(f_N) = 4 N^{-1} \eta_N^{-l} = 2^{l+2} N^{2l/3 -1}/b_N^{2l}$.
Since this is of smaller order than the error terms in \eqref{eq:snlbd},
\eqref{eq:snlbd} remains valid in the GOE case. 

\textcolor{black}{Validity of \eqref{eq:DG-bound1} for the critically spiked G(O/U)E case follows via interlacing. Indeed, the interlacing inequality for rank one perturbations implies that $\mu_j\leq \lambda_j^{(\omega)}\leq \mu_{j-1}$ for $j=1,\dotsc,N$ (and $\mu_0=\infty$). Consequently
\begin{equation*}
  0 \leq \sum_{j=1}^N (\gamhat - \lambda_j^{(\omega)})^{-l}
  - \sum_{j=1}^N (\gamhat - \mu_j)^{-l}
  \leq (\gamhat - \lambda_1^{(\omega)})^{-l}
  = O_{\Pr }(N^{2l/3}/\log^l N),    
\end{equation*}
since part (i) of lemma~\ref{lemma three points} implies that $N^{2/3}(\gamhat - \lambda_1^{(\omega)}) =
\Theta_{\Pr}(\log N)$.
Thus, replacing $\mu_j$ by the critically spiked versions $\lambda_j^{(\omega)}$
in $G^{(l)}(\hat{\gamma})$ does not affect the error terms in \eqref{eq:DG-bound1}. \qed}

\subsection{Proof of \cref{lemma lambda1statistic} for G(O/U)E}\label{sec:lemma2.9 Gaussian}
The validity of \eqref{eq:p4-bd} for G(O/U)E is established in
proposition 3 of \cite{johnstone2020logarithmic}.

\textcolor{black}{Let us now establish \eqref{eq:p4-bd}
  for the critically spiked G(O/U)E. Consider the identity
\begin{equation}
    \label{identity2}
    (W_{J,N}^{(\omega)}-\gamma)^{-1}=(W_N-\gamma)^{-1}-J\frac{(W_N-\gamma)^{-1}ww^\ast(W_N-\gamma)^{-1}}{1+Jw^\ast (W_N-\gamma)^{-1}w}.
\end{equation}
Taking trace and dividing by $-N$, we arrive at
\begin{equation}
\label{after taking trace}
\frac{1}{N}\sum_{i=1}^N\frac{1}{\gamma-\lambda_i^{(\omega)}}=\frac{1}{N}\sum_{i=1}^N\frac{1}{\gamma-\mu_i}+\frac{J}{N}\frac{w^\ast (W_N-\gamma)^{-2}w}{1+Jw^\ast (W_N-\gamma)^{-1} w}.
\end{equation}
From the discussion below (\ref{eq:NVN}),
\begin{equation}
\label{adjustement factor}
1+Jw^\ast (W_N-\gamma)^{-1}w=\Theta_{\Pr}(N^{-1/3}).
\end{equation}
Further, similarly to \eqref{quadratic},
\begin{equation}
\label{quadratic1}
w^\ast (W_N-\gamma)^{-2}w\overset{\mathrm{d}}{=}\frac{\frac{1}{N}\sum_{i=1}^N|\xi_i|^2/(\mu_i-\gamma)^2}{\xi^\ast \xi/N}.
\end{equation}
On the other hand, by \eqref{eq:p4-bd} for the non-spiked G(O/U)E,
\[
\mathbf{E}\left\{\left.\frac{1}{N}\sum_{i=1}^N\frac{|\xi_i|^2}{(\mu_i-\gamma)^2}\right|\mu_1,...,\mu_N\right\}=\frac{1}{N}\sum_{i=1}^N\frac{1}{(\mu_i-\gamma)^2}=O_{\Pr}(N^{1/3}).
\]
Therefore, also,
\begin{equation}
    \label{numerator1}
 \frac{1}{N}\sum_{i=1}^N\frac{|\xi_i|^2}{(\mu_i-\gamma)^2}=O_{\Pr}(N^{1/3}).   
\end{equation}
Combining with \eqref{quadratic1} and \eqref{denominator}, we obtain
\begin{equation}
\label{numerator2}
w^\ast (W_N-\gamma)^{-2}w=O_{\Pr}(N^{1/3}).
\end{equation}
Using this and \eqref{adjustement factor} in \eqref{after taking trace} yields
\[
\frac{1}{N}\sum_{i=1}^N\frac{1}{\gamma-\lambda_i^{(\omega)}}=\frac{1}{N}\sum_{i=1}^N\frac{1}{\gamma-\mu_i}+O_{\Pr}(N^{-1/3})=1+O_{\Pr}(N^{-1/3}),
\]
where the last equality follows from \eqref{eq:p4-bd} for the non-spiked G(O/U)E. This establishes the first equality in \eqref{eq:p4-bd} for the critically spiked G(O/U)E case.}

\textcolor{black}{To see that the second equality in \eqref{eq:p4-bd} also holds, square both sides of \eqref{identity2}, take traces and divide by $N$ to obtain
\begin{eqnarray}
\label{long}
\frac{1}{N}\sum_{i=1}^N\frac{1}{(\gamma-\lambda_i^{(\omega)})^2}&=&\frac{1}{N}\sum_{i=1}^N\frac{1}{(\gamma-\mu_i)^2}-\frac{2J}{N}\frac{w^\ast (W_N-\gamma)^{-3}w}{1+Jw^\ast (W_N-\gamma)^{-1}w}\\ \notag
&&+\frac{J^2}{N}\frac{(w^\ast (W_N-\gamma)^{-2}w)^2}{(1+Jw^\ast (W_N-\gamma)^{-1}w)^2}.
\end{eqnarray}
By \eqref{adjustement factor} and \eqref{numerator2}, the last term on the right hand side is $O_{\Pr}(N^{1/3})$. For the numerator of the second term, we have
\[
|w^\ast (W_N-\gamma)^{-3} w|\leq \|(W_N-\gamma)^{-1}\||w^\ast (W_N-\gamma)^{-2}w|=O_{\Pr}(N),
\]
where we have used \eqref{numerator2} and the equality $\|(W_N-\gamma)^{-1}\|=O_{\Pr}(N^{2/3})$, which follows from the second equation in (24) in Lemma 4 of \cite{johnstone2020logarithmic}. The latter display and \eqref{adjustement factor} imply that the second term on the right hand side of \eqref{long} is $O_{\Pr}(N^{1/3})$. Hence, overall,
\[
\frac{1}{N}\sum_{i=1}^N\frac{1}{(\gamma-\lambda_i^{(\omega)})^2}=\frac{1}{N}\sum_{i=1}^N\frac{1}{(\gamma-\mu_i)^2}+O_{\Pr}(N^{1/3})=O_{\Pr}(N^{1/3}),
\]
where the latter equality follows from \eqref{eq:p4-bd} for the non-spiked G(O/U)E case. This finishes our proof of the second equality in \eqref{eq:p4-bd} for the critically spiked G(O/U)E.}

\bigskip
We turn to proving \eqref{inv mom 1}. Although written for G(O/U)E,
the argument equally works for case G$^\omega$.
\textcolor{black}{ 
\fix{First, recall} 
  that by \eqref{eq:p4-bd}, for fixed $C\in\mathbb{R}$,
\begin{equation}
  \label{eq:prop4a}
  \frac{1}{N}\sum_{j=1}^{N}(2-\mu_j)^{-1}=1+O_{\Pr}(N^{-1/3})
  \qquad \text{and} \qquad
  \frac{1}{N}\sum_{j=1}^{N}(2 - CN^{-2/3} -
  \mu_j)^{-2}=O_{\Pr}(N^{1/3}).  
\end{equation}
By the Cauchy-Schwarz inequality,
\begin{align*}
    \left\vert\frac{1}{N}\sum\nolimits_{j=2}^{N}\frac{1}{\mu_1-\mu_j}-\frac{1}{N}\sum\nolimits_{j=2}^{N}\frac{1}{2-\mu_j}\right\vert & =\left\vert\frac{1}{N}\sum\nolimits_{j=2}^{N}\frac{2-\mu_1}{(2-\mu_j)(\mu_1-\mu_j)}\right\vert\\
    &\leq |2-\mu_1|\left(\frac{1}{N}\sum\nolimits_{j=2}^{N}\frac{1}{(2-\mu_j)^2}\right)^{1/2}\left(\frac{1}{N}\sum\nolimits_{j=2}^{N}\frac{1}{(\mu_1-\mu_j)^2}\right)^{1/2}.
\end{align*}
The bounds in \cref{eq:prop4a} with $C = 0$, along with the Tracy-Widom
convergence $|\mu_1-2|=O_{\Pr}(N^{-2/3})$ (\cref{lemma three points} part (i)) show that to establish
\eqref{inv mom 1}, it is
sufficient to show that
$\frac{1}{N}\sum\nolimits_{j=2}^{N}\frac{1}{(\mu_1-\mu_j)^2}=O_{\Pr}(N^{1/3})$. For this, note that by \cref{lemma three points} (ii), for each $\epsilon > 0$ there exists 
a constant $C$ such that event
$\mathcal{E}_{N,\epsilon} = \{ \mu_1 > 2 - CN^{-2/3} \}$ has probability at least $1 - \epsilon$
for large $N$. On this event,
\begin{equation*}
  \mu_1 - \mu_j \geq
  \begin{cases}
    2 - CN^{-2/3} - \mu_j  & \text{if } \mu_j \leq  2 - C N^{-2/3}
    \\
    \mu_1 - \mu_2      & \text{if } \mu_j >  2 - C N^{-2/3}
  \end{cases}
\end{equation*}
We have $\chi_N(C) := \# \{ j ~:~ \mu_j > 2 - CN^{-2/3}
\} = O_{\Pr}(1)$ and $\mu_1 - \mu_2 = \Theta_{\Pr}(N^{-2/3})$
by Lemma \ref{lemma three points} parts (iii) and (iv) respectively. Using also
\eqref{eq:prop4a} we obtain, on $\mathcal{E}_{N,\epsilon}$,
\begin{equation*}
  \frac{1}{N}\sum\nolimits_{j=2}^{N}\frac{1}{(\mu_1-\mu_j)^2}
  \leq
  \frac{1}{N}\sum\nolimits_{j=1}^{N}\frac{1}{(2-CN^{-2/3}-\mu_j)^2}
  +\frac{\chi_N(C)}{N(\mu_1-\mu_2)^2}
  = O_{\Pr}(N^{1/3}).
\end{equation*}
This completes the proof of \cref{lemma lambda1statistic} for unspiked G(O/U)E cases. 
}

\textcolor{black}{\fix{In case G$^\omega$} the equalities in \eqref{inv mom 1} follow from \eqref{eq:p4-bd} and Lemma \ref{lemma three points} in exactly the same way as in the unspiked G(O/U)E case. \qed}

\subsection{Proof of proposition \ref{independence} for spiked G(O/U)E}
\label{sec:proof-prop-ref}

\textcolor{black}{Now we turn to the critically spiked G(O/U)E with $J=1-\omega N^{-1/3}$.}

\textcolor{black}{As explained e.g.~in \cite{johnstone2020logarithmic}, the analogue of \eqref{tridiag_M_N} for spiked G(O/U)E is
\begin{equation}\label{tridiag_M_Nspike}
    \sqrt{N}M_{J,N}^{(\omega)} = 
    \begin{pmatrix}
        a_1 & b_1 & & \\
        b_1 & a_2 & b_2 &   \\
         & b_{2} & \ddots & \ddots \\
         && \ddots & \ddots & b_{N-1} \\
         &&& b_{N-1} & a_{N}+\sqrt{N}J
    \end{pmatrix}.
\end{equation}
Hence, we will interpret $\lambda_j^{(\omega)}$ as the eigenvalues of $M_{J,N}^{(\omega)}$.
From \eqref{identity1} and \eqref{adjustement factor}, 
\[
\sum_{j=1}^N\log|2-\lambda_j^{(\omega)}|=\sum_{j=1}^N\log|2-\mu_j|-\frac{1}{3}\log N +O_{\Pr}(1).
\]
Therefore,
\[
\xi_{1N}^{(\omega)}=\xi_{1N}+o_{\Pr}(1)=X_N/\sqrt{\frac{\alpha}{3}\log N}+o_{\Pr}(1),
\]
where $X_N$ depends only on $a_i$ and $b_i$ with $i<N-2N^{1/3}\log^3 N$, as explained above.}

\textcolor{black}{Further, let $Y_N^{(\omega)}$ be the largest eigenvalue of the bottom-right minor of $M_{J,N}^{(\omega)}$ of size $l= 2N^{1/3}\log^3 N$. Following the proof of \cref{tilde_lambda}, we find that $|\lambda_1^{(\omega)}-Y_N^{(\omega)}|=O_{\Pr}(N^{-K})$ for any $K>0$. Therefore,
\[
(\xi_{1N}^{(\omega)},\xi_{2N}^{(\omega)})=(X_N/\sqrt{\frac{\alpha}{3}\log N},N^{2/3}(Y_N^{(\omega)}-2))+o_{\Pr}(1),
\]
where $X_N$ and $Y_N^{(\omega)}$ are independent random variables such that
\[
X_N/\sqrt{\frac{\alpha}{3}\log N}\drightarrow\mathcal{N}(0,1),\quad N^{2/3}(Y_N^{(\omega)}-2)\drightarrow \mathrm{BV}_{2/\alpha}(\omega).
\]\qed
}

\section{\textcolor{black}{Extension to Wigner cases}}\label{sec:wigner}
In this section we extend the results that were proven in section \ref{sec:proof key lemmas} for the G(O/U)E cases with $J=0$ \textcolor{black}{to sub-critically spiked Wigner cases.}

\subsection{Proof strategy and preliminary results}
Proving that a Wigner matrix $W'_N$ satisfies a certain property as
long as a matrix $W_N$ from \textcolor{black}{scaled} G(O/U)E satisfies this property is often
based on the Lindeberg swapping process, where elements of $W_N$ are
replaced by the elements of $W'_N$ one by one without losing the
property. 
Typically, one needs to show that any individual swap does not change
the expectation $\mathbf{E}Q(M)$ of some smooth function $Q(\cdot)$ of
the matrix $M$ participating in the swapping process too much. 

\textcolor{black}{In more detail, let $\gamma$ index an ordering of the independent components $\{\Re \xi_{ij}, \Im \xi_{ij}\}_{i<j}$ and $\{ \xi_{ii} \}$ of $\sqrt{N}W_N$.
Thus $\gamma$ runs over $N^2$ and $N(N+1)/2$ elements in the Hermitian and symmetric cases respectively.
Let $W^\gamma$ refer to a matrix in which the elements prior to $\gamma$ come from $W_N'$ while those at $\gamma$ or later come from $W_N$.}

\textcolor{black}{At stage $\gamma$ in the swapping process, we can write
$W^{(0)} = W^{\gamma}$, $W^{(1)} = W^{\gamma +1}$, and
\begin{equation} \label{eq:swapdefs}
  W^{(0)} = W_0^\gamma + \frac{\xi^{(0)}}{\sqrt N} V_\gamma, \qquad
  W^{(1)} = W_0^\gamma + \frac{\xi^{(1)}}{\sqrt N} V_\gamma,
\end{equation}
and $ W^\gamma_{0}$ is independent of both $\xi^{(0)}$ and $\xi^{(1)}$.
In the symmetric case, $V_\gamma$ is one of the elementary matrix
$e_a e_a^*$ or $ e_a e_b^* + e_b e_a^*$. In the Hermitian case, we add matrices $ \im e_a e_b^* -  \im e_b e_a^*$. Here $e_j$ denotes the $j$-th column of the identity matrix $I_N$, and $a,b$ correspond to the row and column of the components of $W_N$ and $W_N'$ being swapped at stage $\gamma$. These components are denoted as $\xi^{(0)}/\sqrt{N}$ and $\xi^{(1)}/\sqrt{N}$ respectively.
All matrices $W^\gamma$
are Wigner matrices.}

\textcolor{black}{We consider $W^{(0)}$ and $W^{(1)}$ as perturbations of $W_0^\gamma$.
Thus, set $W^\gamma_t = W_0^\gamma + t N^{-1/2} V_\gamma$, and 
introduce $Q_\gamma(t) = Q(W_t^\gamma)$ for some smooth function $Q(\cdot)$. We first summarize the properties of such a $Q(\cdot)$ that are sufficient to carry out the swapping process.}

\begin{definition}
Fix $c_0>0$ and set $ \| F \|_{c_0} = \sup \{ |F(t)|, |t| \leq   N^{c_0} \}$.
Let  \(\delta_N \rightarrow 0\) in such a way that \(\delta_N \gtrsim
N^{-c_1}\) for some \(c_1 > 0\).
Let $Q$ be a function on $N \times N$ Hermitian/symmetric matrices
taking values in $[0,1]$.
Let Wigner matrices $W_N, W_N'$ be given and define
$Q_\gamma(t) = Q(W_t^\gamma)$ as above.
We say that $Q$ satisfies \textbf{condition F} or $F(\delta_N)$ if for
all $\gamma$ and $1 \leq k \leq 4$ we have w.o.p. that 
  \begin{align}
    \label{eq:D-cond-1}
    \norm{Q_{\gamma}^{(k)}}_{c_0}
    \lesssim N^{-\frac{k}{2}} \delta_{N}.  \tag{F}
  \end{align}
\end{definition}


Next, we present proposition 20 of \cite{johnstone2020logarithmic}, which details how condition F is used to control differences in the distributions of Wigner matrices:
\begin{proposition}
  \label{prop:multi-matching}
  Let \(W_N, W_N'\) be Wigner matrices
  whose moments match to third order.
  Let \(c_0, c_1> 0\) be fixed and for
each \(j = 1, \dotsc, m\), let \(Q_j \maps \C^{N \times N} \rightarrow
[0, 1]\) satisfy condition $F(\delta_{j,N})$.  If \(Q = \prod_{j=1}^m
Q_j\), then, 
  \begin{align}
    \label{eq:Q-swap}
    \E Q(W_N) - \E Q(W_N')
    \lesssim \max_{j=1, \ldots m} \delta_{j,N}.
  \end{align}
\end{proposition}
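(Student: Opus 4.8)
The plan is to run the Lindeberg replacement scheme set up in \eqref{eq:swapdefs}: first reduce to a single functional, then swap the independent coordinates of $\sqrt N W_N$ one at a time, controlling each swap by a third-order Taylor expansion whose low-order terms cancel by the moment match and whose remainder is estimated via condition~F.

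\emph{Reduction to one $Q$.} First I would verify that $Q=\prod_{j=1}^m Q_j$ itself satisfies condition $F(\delta_N)$ with $\delta_N:=\max_j\delta_{j,N}$. With $Q_\gamma(t)=\prod_j Q_{j,\gamma}(t)$, the Leibniz rule expresses $Q_\gamma^{(k)}$, $k\le 4$, as a finite sum—with weights depending only on $m$ and $k$—of products in which the differentiation orders of the factors add up to $k$. Bounding each genuinely differentiated factor $Q_{j,\gamma}^{(k_j)}$, $k_j\ge 1$, by $N^{-k_j/2}\delta_{j,N}$ on its good event, each undifferentiated factor by $1$ (as $0\le Q_j\le 1$), and using $\delta_{j,N}\le 1$ for large $N$, gives $\|Q_\gamma^{(k)}\|_{c_0}\lesssim N^{-k/2}\delta_N$ w.o.p. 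Hence it suffices to show: if a single $Q\maps\C^{N\times N}\to[0,1]$ satisfies $F(\delta_N)$, then $\E Q(W_N)-\E Q(W_N')\lesssim\delta_N$.

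\emph{One swap and summation.} Telescoping along the interpolants $W^\gamma$ of \eqref{eq:swapdefs}, all of which are Wigner matrices, $\E Q(W_N)-\E Q(W_N')=\sum_\gamma\bigl(\E Q(W^\gamma)-\E Q(W^{\gamma+1})\bigr)$. Fix an off-diagonal coordinate $\gamma$, so that $W^\gamma=W_0^\gamma+N^{-1/2}\xi^{(0)}V_\gamma$ and $W^{\gamma+1}=W_0^\gamma+N^{-1/2}\xi^{(1)}V_\gamma$ with $W_0^\gamma$ independent of the real scalars $\xi^{(0)},\xi^{(1)}$, which are centred, have all moments bounded, and agree in their first three moments. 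Put $Q_\gamma(t)=Q(W_0^\gamma+N^{-1/2}tV_\gamma)$. The structural observation is that the condition-F good event
\[
  \mathcal G_\gamma = \bigl\{\, \|Q_\gamma^{(k)}\|_{c_0} \lesssim N^{-k/2}\delta_N,\ k = 1,\dots,4 \,\bigr\}
\]
depends only on $W_0^\gamma$, hence is independent of $\xi^{(0)},\xi^{(1)}$, and holds w.o.p. On $\mathcal G_\gamma\cap\{|\xi^{(i)}|\le N^{c_0}\}$—the second event holding w.o.p.\ by the moment bounds—Taylor-expand $Q_\gamma(\xi^{(i)})$ about $0$ with Lagrange remainder $\tfrac1{4!}(\xi^{(i)})^4 Q_\gamma^{(4)}(\zeta_i)$, $|\zeta_i|\le N^{c_0}$. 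Now condition on $W_0^\gamma$: the coefficients $Q_\gamma^{(k)}(0)$ factor out of the expectation over $\xi^{(i)}$ and satisfy $|Q_\gamma^{(k)}(0)|\lesssim N^{-k/2}\delta_N$ on $\mathcal G_\gamma$; the remainder, together with $\E(\xi^{(i)})^4=O(1)$, contributes $O(N^{-2}\delta_N)$; the orders $k=0,1,2,3$ cancel between $i=0$ and $i=1$ by the moment match, up to tail corrections $\E[(\xi^{(i)})^k\ind\{|\xi^{(i)}|>N^{c_0}\}]$ handled by Cauchy--Schwarz and Markov; and the contribution of $\mathcal G_\gamma^c\cup\{|\xi^{(i)}|>N^{c_0}\}$ is controlled using $0\le Q\le 1$. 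This yields $|\E Q(W^\gamma)-\E Q(W^{\gamma+1})|\lesssim N^{-2}\delta_N$, up to a term that is $O(N^{-D})$ for any $D$. A diagonal coordinate is treated by the same expansion; since only the mean matches there, the surviving second-order term leaves the cruder bound $\lesssim N^{-1}\delta_N$, which is still acceptable because there are only $N$ diagonal coordinates. Summing over the $O(N^2)$ off-diagonal and $N$ diagonal coordinates, $\E Q(W_N)-\E Q(W_N')\lesssim\delta_N+O(N^{2-D})$, and since $\delta_N\gtrsim N^{-c_1}$, taking $D$ large makes the error $o(\delta_N)$, completing the proof.

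\emph{Main obstacle.} The one genuinely delicate point is the bookkeeping in the single-swap step: one must keep the condition-F good event $\mathcal G_\gamma$ and the truncation $\{|\xi^{(i)}|\le N^{c_0}\}$ independent of the coordinate being swapped, so that conditioning on $W_0^\gamma$ really does factor the expectations and let the moment match annihilate the low-order Taylor terms; and one must verify that the various ``junk'' contributions—the complement of the condition-F event, the tail $\{|\xi^{(i)}|>N^{c_0}\}$, and the non-matching diagonal second moments—are each either $O(N^{-D})$ per swap with $D$ as large as desired, or of size $O(N^{-1}\delta_N)$ on the $O(N)$ diagonal swaps, so that none of them survives summation at a level comparable to $\delta_N$. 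The remaining estimates are routine Taylor bounds.
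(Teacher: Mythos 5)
The paper does not prove this proposition: it is imported verbatim as Proposition~20 of \cite{johnstone2020logarithmic} and stated without proof. That said, your reconstruction is a correct instance of the standard four-term Taylor/Lindeberg swapping argument that condition~F was manufactured to support, and each of the ingredients you invoke is the right one: the Leibniz reduction of $\prod_j Q_j$ to a single functional satisfying $F(\max_j\delta_{j,N})$ (where the key point, which you handle, is that a product of the $\delta_{j,N}\le1$ factors appearing in cross-terms is still $\le\max_j\delta_{j,N}$); the $W_0^\gamma$-measurability of the good event $\mathcal G_\gamma$, which is what lets the conditional expectation factor and the moment match annihilate the $k=0,\dots,3$ terms; the $O(N^{-2}\delta_N)$ remainder from $\|Q_\gamma^{(4)}\|_{c_0}\lesssim N^{-2}\delta_N$ on the $O(N^2)$ off-diagonal swaps; the separate $O(N^{-1}\delta_N)$ bookkeeping on the $N$ diagonal swaps, where the second moments need not match (consistent with the paper's definition of moment matching being off-diagonal only, and with Definition~\ref{Wigner definition}(ii) allowing unequal diagonal variances); and the absorption of the complement of $\mathcal G_\gamma\cap\{|\xi^{(i)}|\le N^{c_0}\}$ via $0\le Q\le1$, w.o.p.\ bounds, and $\delta_N\gtrsim N^{-c_1}$. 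I see no gap; this is, to the best of my knowledge, essentially the argument in the cited reference, and the only fully routine details not spelled out (e.g.\ that one swaps $\Re\xi_{ij}$ and $\Im\xi_{ij}$ separately in the Hermitian case, so the count is $N^2$ rather than $N(N+1)/2$, and that the tail corrections to the truncated moments are $O(N^{-D})$ by Cauchy--Schwarz plus the uniform moment bound) are correctly flagged as such.
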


\textcolor{black}{Let 
\[
s_{W_N}(z):=\frac{1}{N}\tr\left(W_N-z\right)^{-1}
\]
be the Stieltjes transform of the empirical spectral distribution of $W_N$.} The following result is proposition 24 of \cite{johnstone2020logarithmic}, 
which is the main tool that we will use to show that a function satisfies condition \textbf{F}.
\begin{proposition}
  \label{prop:system}
  Let \(W_N\) be a Wigner matrix.
  \textcolor{black}{Fix $ \epsilon > 0 $ small and $ 0 < c_0 < 1/2$. Let \(E, E_1, E_2 \in \R\) be such that \(\abs{E - 2} \lesssim N^{-\frac{2}{3} + 2\epsilon}\) and \(\abs{E_i - 2} \lesssim N^{-\frac{2}{3} + 10\epsilon}\), $i=1,2$.}

  For each of the following statistics, define functions \(g \maps \C^{N\times N} \rightarrow \R\), \(G\maps \R \rightarrow \R\) and a sequence \(\delta_N\) according to the following specifications, \textcolor{black}{in each case for $1\leq j\leq 4$}:
  \begin{enumerate}
   \item Log-determinant: with $\gamma_N = N^{-2/3-\epsilon}$,
     \begin{equation*}
       g(W_N) = N \int_{\gamma_N}^{N^{100}} \Im s_{W_N}(E+ \im \eta) \diff \eta,
       \qquad \|G^{(j)}\|_\infty \leq (\log N)^{-j/4},
       \qquad \delta_N = (\log N)^{-1/4}.
     \end{equation*}
  \item Eigenvalue counting: with $\eta = N^{-2/3-9\epsilon}$,
    \begin{equation*}
      g(W_N) = \frac{N}{\textcolor{black}{\pi}} \int_{E_1}^{E_2} \Im s_{W_N}(y + \im \eta) \diff y,
      \qquad \|G^{(j)}\|_\infty \leq (\log N)^{Cj},
      \qquad \delta_N = N^{-\frac{1}{3} + O(\epsilon)}.
    \end{equation*}
  \item Inverse moments:
    with \(\eta = N^{-2/3-\epsilon}\) and $l\in\mathbb{Z}_+$,
    \begin{equation*}
      g(W_N)
      = N^{-\frac{2}{3}l + 1} \Re s_{W_N}^{(l-1)}(E + \im \eta),
      \qquad \|G^{(j)}\|_\infty \leq (\log N)^{Cj}, \\
      \qquad \delta_N = N^{-\frac{1}{3} + O(\epsilon)}.
    \end{equation*}
  \end{enumerate}

  In each of the cases listed above, the corresponding function \(Q = G \circ g\) satisfies condition $F(\delta_N)$.
\end{proposition}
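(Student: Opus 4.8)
The plan is to reduce, via the chain rule, the required bounds on the $t$-derivatives of $Q_\gamma = G\circ g_\gamma$, where $g_\gamma(t):=g(W_t^\gamma)$, to bounds on the $t$-derivatives of $g_\gamma$, and then to compute the latter from the resolvent $R(z)=(W_t^\gamma-z)^{-1}$ and estimate them using the local semicircle law near the spectral edge. By Fa\`a di Bruno's formula, for $1\le k\le 4$,
\begin{equation*}
  Q_\gamma^{(k)}(t)=\sum_{\pi}c_\pi\,G^{(|\pi|)}\!\bigl(g_\gamma(t)\bigr)\prod_{B\in\pi}g_\gamma^{(|B|)}(t),
\end{equation*}
the sum running over set partitions $\pi$ of $\{1,\dots,k\}$; every block has $|B|\ge 1$, so only the derivatives $g_\gamma^{(m)}$ with $1\le m\le 4$ occur. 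In view of the hypotheses $\|G^{(j)}\|_\infty\le(\log N)^{-j/4}$ (log-determinant) and $\|G^{(j)}\|_\infty\le(\log N)^{Cj}$ (counting and inverse moments), it suffices to show that, w.o.p.\ and uniformly over $\gamma$ and $|t|\le N^{c_0}$, one has $|g_\gamma^{(m)}(t)|\lesssim N^{-m/2}$ in the log-determinant case and $|g_\gamma^{(m)}(t)|\lesssim N^{-m/2}\,N^{-1/3+O(\epsilon)}$ in the other two: then $\prod_{B\in\pi}|g_\gamma^{(|B|)}(t)|\lesssim N^{-k/2}$, respectively $\lesssim N^{-k/2}\bigl(N^{-1/3+O(\epsilon)}\bigr)^{|\pi|}$, and summing against $\|G^{(|\pi|)}\|_\infty$ yields $\lesssim N^{-k/2}\delta_N$, the one-block partition dominating.

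I would next differentiate the resolvent. Since $W_t^\gamma=W_0^\gamma+tN^{-1/2}V_\gamma$ with $V_\gamma$ a fixed matrix of rank at most $2$ (a sum of at most three rank-one matrices $e_ae_b^*$), $\frac{d}{dt}R=-N^{-1/2}RV_\gamma R$ and, inductively, $\frac{d^m}{dt^m}R=(-1)^mm!\,N^{-m/2}(RV_\gamma)^mR$. Each of the three statistics is $s_{W}(z)=\frac1N\tr R$ acted on by either $z$-differentiation (inverse moments), or integration in $\eta$ over $(\gamma_N,N^{100})$ (log-determinant), or integration in $y$ over $(E_1,E_2)$ (counting); as $\partial_t$ commutes with $\partial_z$ and with these integrations, $g_\gamma^{(m)}(t)$ equals, up to the fixed normalization appearing in $g$, the same operation applied to $(-1)^mm!\,N^{-m/2}\,\frac1N\tr\bigl((RV_\gamma)^mR\bigr)$. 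Expanding in the rank-one pieces of $V_\gamma$, the trace $\tr((RV_\gamma)^mR)$ is a sum of at most $3^m$ products of $m$ resolvent entries — namely $m-1$ entries of $R$ and one entry of $R^2=\partial_zR$ — at the $O(1)$ row and column indices $a,b$ prescribed by $V_\gamma$; further $z$-differentiations (inverse-moment case) turn these into products of $m$ entries of resolvent powers $R^{p_i}$ with $\sum_i p_i=m+1+(\text{number of further }\partial_z\text{'s})$ and, since that sum exceeds $m$, with at least one $p_i\ge2$ and at most $m-1$ of the $p_i$ equal to $1$. The point of this reduction is that, unlike $\frac1N\tr R$, which averages $N$ diagonal entries, these expressions contain only boundedly many resolvent entries, so the factor $\frac1N$ is lost and must be recovered from the smallness of the entries near the edge together with the $\eta$- or $y$-integration.

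The estimates then invoke the (by now standard) entrywise and isotropic local semicircle law for Wigner matrices, valid w.o.p.\ at spectral parameters $z$ with $|\Re z-2|\lesssim N^{-2/3+O(\epsilon)}$ and $\Im z\gtrsim N^{-2/3-O(\epsilon)}$: $|R_{cd}(z)-m_{\mathrm{SC}}(z)\delta_{cd}|\lesssim\Psi(z)$ with $\Psi(z)\lesssim N^{-1/3+O(\epsilon)}$ and $|m_{\mathrm{SC}}(z)|\asymp1$. Combined with the Ward identity $\sum_k|R_{ck}|^2=\Im R_{cc}/\Im z$ and Cauchy estimates on discs of radius $\tfrac12\Im z$, this gives, near the edge, $|R_{cc}|=O(1)$, $|R_{cd}|\lesssim N^{-1/3+O(\epsilon)}$ for $c\ne d$, and $|(R^p)_{cd}|\lesssim N^{2p/3-1+O(\epsilon)}$ for $p\ge2$ (reflecting that $m_{\mathrm{SC}}(2+w)=-1+\sqrt w+O(w)$, whose $(p-1)$st derivative is of order $|w|^{3/2-p}$). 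Inserting these bounds and doing the bookkeeping case by case: for the log-determinant, $|\tr((RV_\gamma)^mR)(E+\im\eta)|\lesssim\Im m_{\mathrm{SC}}(E+\im\eta)/\eta$, which is $\lesssim\eta^{-1/2}$ for $\eta\le1$ and $\lesssim\eta^{-m-1}$ for $\eta\ge1$, hence integrable against $d\eta$ on $(\gamma_N,\infty)$ with a bounded integral, so $|g_\gamma^{(m)}|\lesssim N^{-m/2}$ with no logarithmic loss; for counting, the window length $|E_2-E_1|\lesssim N^{-2/3+O(\epsilon)}$ times the pointwise trace bound $N^{1/3+O(\epsilon)}$ supplies the extra $N^{-1/3+O(\epsilon)}$; for the order-$l$ inverse moment, the prefactor $N^{-2l/3}$ in $g$ compensates the $N^{2(l-1)/3+O(\epsilon)}$ growth produced by distributing the $l-1$ $z$-derivatives (which, since all but one resolvent factor remains $O(1)$, pile onto the single factor that is already $R^2$), and with $|R^2|\lesssim N^{1/3}$ near the edge this leaves exactly $N^{-1/3+O(\epsilon)}$. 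Uniformity over the at most $N^2$ values of $\gamma$ is a union bound, and uniformity over $|t|\le N^{c_0}$ follows from a net argument: the bounds hold w.o.p.\ simultaneously on a polynomially fine net of $t$'s, and between net points one interpolates via a crude deterministic Lipschitz bound on $g_\gamma^{(m+1)}$ coming from $\|R\|\le(\Im z)^{-1}$, which is negligible because $c_0<1/2$ forces $N^{c_0-(m+1)/2}\ll N^{-m/2}$.

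The main obstacle is the bookkeeping of this last step: one must carefully track the powers of $N$ and of $\Im z$ through the combinatorial expansion, and in particular isolate the structural fact that in every monomial of $\partial_z^{\,l-1}\tr((RV_\gamma)^mR)$ at least one resolvent factor has power $\ge2$ — without it an extra $N^{1/3}$ would be lost and the counting and inverse-moment cases would fail — as well as verify that in the log-determinant case the lost factor $\frac1N$ is recovered \emph{exactly}, not merely up to $N^{O(\epsilon)}$, from the $\eta$-integration. A secondary technical point is the stability of the edge local law under the perturbation $tN^{-1/2}V_\gamma$: it has rank $\le2$ and operator norm $N^{c_0-1/2}=o(1)$, but a Neumann expansion in operator norm fails at these small values of $\Im z$, so one argues instead through the resolvent identity $R_t=R_0-tN^{-1/2}R_0V_\gamma R_t$, using that $R_0V_\gamma$ only involves the (locally-law-controlled) entries of $R_0$ in columns $a$ and $b$.
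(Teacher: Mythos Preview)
The paper does not prove this proposition itself; it quotes it as ``proposition 24 of \cite{johnstone2020logarithmic}'' and uses it as a black box. Your outline is exactly the standard Green-function comparison argument that reference carries out: Fa\`a di Bruno reduces to bounds on $\partial_t^m g_\gamma$, the identity $\partial_t^m R=(-1)^m m!\,N^{-m/2}(RV_\gamma)^m R$ collapses the trace to $O(1)$ resolvent entries at the swap indices, and the entrywise local law together with the Ward identity supply the pointwise size estimates, with the $\eta$- or $y$-integration (or the $N^{-2l/3}$ prefactor) recovering the missing power of $N$.

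One refinement worth making explicit on the point you correctly flag as the ``main obstacle'' in the log-determinant case: the $N^\epsilon$ loss in the local law does \emph{not} contaminate the final $O(1)$ bound on $\int_{\gamma_N}^{N^{100}}|\tr((RV_\gamma)^mR)|\,d\eta$, because that $N^\epsilon$ multiplies $\Psi$, and $\int_{\gamma_N}^{1}\Psi(E+\im\eta)/\eta\,d\eta=O(N^{-1/3+O(\epsilon)})$ is already negligible next to the $O(1)$ contribution of $\int\Im m_{\mathrm{SC}}/\eta\,d\eta$. Alternatively, for $m=1$ one can integrate exactly via $\Im(R^2)_{cd}=-\partial_\eta\Re R_{cd}$, reducing to boundary values $R_{cd}(E+\im\gamma_N)$ that are $O(1)$ on the diagonal and $O(N^{-1/3+O(\epsilon)})$ off it; higher $m$ are handled by the crude bound since one $R$-factor is always $O(1)$.
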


Our main tool for establishing distributional results about joint convergence is proposition 21 of \cite{johnstone2020logarithmic}:

\begin{proposition}
  \label{prop:joint-convergence}
     Let \(W_N, W_N'\) be Wigner matrices
   whose off-diagonal moments match up to third order.
  Let $\bxi_N = \bxi_N(W_N)$ and $\bxi_N' = \bxi_N(W_N')$ both be
  $\mR^m$ valued random vectors.
  Suppose that $\bm{\xi}_N \stackrel{\rm d}{\to} \bm{\xi}$, and that
  each component $\xi_j$ of the limit has a continuous distribution
  function.
  
  Let $\eta_N \to 0$ be given, and suppose that for each $1 \leq j
  \leq m$ and $s \in \mR$ 
 there exists a function $Q_j(\cdot,s)$   satisfying condition $F(\delta_{j,N})$ such that for $W = W_N, W_N'$, w.o.p.
 \begin{equation}
     \label{eq:Qj}
      \bo \{ \xi_{Nj}(W) \leq s-\eta_N\}
   \leq Q_j(W,s)  
  \leq \bo \{ \xi_{Nj}(W) \leq s + \eta_N\}.
 \end{equation}
   
   Then we also have (joint) convergence
   $\bm{\xi}_N' \stackrel{\rm d}{\to} \bm{\xi}$.
\end{proposition}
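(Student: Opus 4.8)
The plan is to transfer the joint limit from $W_N$ to $W_N'$ by comparing the joint distribution functions $H_N(t)=\Pr(\bxi_N\le t)$ and $H_N'(t)=\Pr(\bxi_N'\le t)$ (all vector inequalities understood componentwise), using the sandwich \eqref{eq:Qj} to replace the indicator of the box $\{\bxi_N(W)\le t\}$ by the product $Q(W)=\prod_{j=1}^m Q_j(W,t_j)$ of the supplied condition-$F$ functions, and then applying \cref{prop:multi-matching} to that product.

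First I would record a reduction coming from the continuity of the marginals. If $F_j$ denotes the $j$-th marginal of $\bxi$, then increments of $H(t)=\Pr(\bxi\le t)$ are dominated by sums of increments of the $F_j$, so $H$ is continuous on all of $\R^m$. Hence $\bxi_N\overset{d}{\rightarrow}\bxi$ already yields $H_N(t)\to H(t)$ for \emph{every} $t$, and it suffices to prove the pointwise convergence $H_N'(t)\to H(t)$ for every $t\in\R^m$. Now fix $t$, let $\mathbf{1}$ be the all-ones vector in $\R^m$, take the functions $Q_j(\cdot,t_j)$, $j=1,\dots,m$, from the hypothesis, each satisfying condition $F(\delta_{j,N})$ with $\delta_{j,N}\to 0$, and set $Q=\prod_{j=1}^m Q_j(\cdot,t_j)$, which maps into $[0,1]$. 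Since a finite intersection of events holding w.o.p.\ again holds w.o.p., the product of the bounds in \eqref{eq:Qj} holds w.o.p., and taking expectations (using $0\le Q\le 1$ to absorb the exceptional event) gives, for $W\in\{W_N,W_N'\}$ and every $c>0$,
\[
  \Pr\bigl(\bxi_N(W)\le t-\eta_N\mathbf{1}\bigr)-O(N^{-c})
  \;\le\;\E Q(W)\;\le\;
  \Pr\bigl(\bxi_N(W)\le t+\eta_N\mathbf{1}\bigr)+O(N^{-c}).
\]
Because $W_N$ and $W_N'$ match to third order, \cref{prop:multi-matching} applied to $Q$ (and with the roles of $W_N,W_N'$ swapped) gives $|\E Q(W_N)-\E Q(W_N')|\lesssim\max_j\delta_{j,N}=o(1)$. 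Combining the two instances of the display with this bound yields
\[
  H_N'(t-\eta_N\mathbf{1})\le H_N(t+\eta_N\mathbf{1})+o(1),
  \qquad
  H_N'(t+\eta_N\mathbf{1})\ge H_N(t-\eta_N\mathbf{1})-o(1).
\]

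Finally I would pass to the limit. Fix $\rho>0$; applying the first inequality with $t$ replaced by $t+\rho\mathbf{1}$ and using monotonicity of $H_N'$ and $H_N$, for all $N$ so large that $\eta_N<\rho$ we get $H_N'(t)\le H_N'\bigl(t+(\rho-\eta_N)\mathbf{1}\bigr)\le H_N(t+2\rho\mathbf{1})+o(1)$, hence $\limsup_N H_N'(t)\le H(t+2\rho\mathbf{1})$; letting $\rho\downarrow 0$ and using continuity of $H$ gives $\limsup_N H_N'(t)\le H(t)$. The symmetric argument, applied to the second inequality with $t-\rho\mathbf{1}$, gives $\liminf_N H_N'(t)\ge H(t)$. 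Thus $H_N'(t)\to H(t)$ for all $t$, i.e.\ $\bxi_N'\overset{d}{\rightarrow}\bxi$. The only genuinely structural step is writing the box indicator as a \emph{product} of condition-$F$ functions, so that the multi-matching estimate \cref{prop:multi-matching} applies; once that is in place the rest is routine $\eta_N$- and $\rho$-bookkeeping, and the continuity of the marginals is precisely what lets us avoid any discussion of continuity points of the limit law.
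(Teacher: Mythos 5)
Your proof is correct, and it is the natural argument: factor the box indicator as a product $Q=\prod_j Q_j(\cdot,t_j)$, apply \cref{prop:multi-matching} to transfer $\E Q$ from $W_N$ to $W_N'$, and sandwich using \eqref{eq:Qj} together with the continuity of the joint CDF (which, as you note, follows from continuity of the marginals). The present paper does not reproduce the proof of \cref{prop:joint-convergence} --- it cites it as Proposition~21 of \cite{johnstone2020logarithmic} --- so I cannot compare line by line, but \cref{prop:multi-matching} is stated precisely for products of condition-$F$ functions, which is the structural step you identify, so your argument is surely equivalent to the intended one.
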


Throughout the remaining of this section we denote by $ W_N $ a matrix from scaled GUE ($\alpha = 1$) or scaled GOE ($\alpha = 2$) and by $W_N'$ the corresponding real ($\alpha = 1$) or complex ($\alpha = 2$) Wigner matrix, \textcolor{black}{whose off-diagonal moments match scaled G(O/U)E up to third order.}
In order to complete the proofs we also need to add a sub-critical spike. For fixed $J \in [0, 1)$, consider the matrix
\begin{equation}\label{spiked_definition}
    W_{J,N}' = W_N' + J \bv\bv^*,
\end{equation}
where $\bv$ is arbitrary unit vector from $\mathbb{C}^N$ (from $\mathbb{R}^N$ if $\alpha = 2$).

\textcolor{black}{Consistent with notation used in \cref{sec:proof key lemmas}}, we denote by \textcolor{black}{$  \mu_{1} \geq \dots \geq \mu_N $ the eigenvalues of $W_N$, by $  \mu_{1}' \geq \dots \geq \mu_N' $ the eigenvalues of $W_N'$, and by $ \lambda_{1} \geq \dots \geq \lambda_N $ the eigenvalues of $W_{J,N}'$. We transfer the properties of $ W_N' $ to $W_{J,N}'$ using the Cauchy interlacing theorem
\[
    \lambda_{1} \geq \mu_{1}' \geq \lambda_{2} \geq \mu_{2}' \geq \dots \geq \lambda_{N} \geq \mu_{N}'\,.
\]
}
In addition, we will rely on the \emph{stickiness} of the top eigenvalues of $W_N'$ to its deformed counterpart $W_{J,N}'$.

\begin{proposition}[Stickiness of top eigenvalues]
\label{prop:sticky}
Suppose $W_N'$ is a Wigner matrix whose off-diagonal moments match G(O/U)E up to third order and fix arbitrary {$ \epsilon \in(0, 1/6) $.} Let $J \in (0, 1)$ and $ W_{J,N}' = W_N' + J \bv\bv^* $ for a unit vector $\bv$. Then, w.o.p.,
\[
    \max_{j \leq N^{\epsilon}}|\lambda_{j} - \mu_{j}'| = O(N^{-1 + 2\epsilon}).
\]
\end{proposition}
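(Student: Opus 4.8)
The plan is to interpolate along the rank-one path from $W_N'$ to $W_{J,N}'$, express the displacement of each top eigenvalue as an integral of eigenvector overlaps with $\bv$, and control those overlaps by isotropic delocalization near the spectral edge.

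\smallskip
\noindent\textbf{Interpolation and reduction.} For $t\in[0,1]$ set $W(t)=W_N'+tJ\bv\bv^{*}$, with eigenvalues $\lambda_1(t)\ge\dots\ge\lambda_N(t)$ and an orthonormal system of eigenvectors $u_1(t),\dots,u_N(t)$, so that $\lambda_j(0)=\mu_j'$ and $\lambda_j(1)=\lambda_j$. By Weyl's inequality each $\lambda_j(\cdot)$ is $J$-Lipschitz, and Cauchy interlacing applied to $W(t)$ gives $\lambda_{j-1}(t)\ge\mu_{j-1}'\ge\lambda_j(t)$ for all $t$, so for generic realisations the $\lambda_j(\cdot)$ remain simple and non-crossing on $[0,1]$ (the general case follows by a routine approximation argument). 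First-order perturbation theory then gives $\tfrac{\diff}{\diff t}\lambda_j(t)=J\,|\langle\bv,u_j(t)\rangle|^{2}$, whence
\begin{equation*}
  0\ \le\ \lambda_j-\mu_j'\ =\ J\int_0^1|\langle\bv,u_j(t)\rangle|^{2}\,\diff t\ \le\ \sup_{t\in[0,1]}|\langle\bv,u_j(t)\rangle|^{2},
\end{equation*}
the lower bound being just interlacing. Hence it suffices to prove that, w.o.p.,
\begin{equation}\label{eq:sticky-deloc}
  \sup_{t\in[0,1]}\ \max_{j\le N^{\epsilon}}\,|\langle\bv,u_j(t)\rangle|^{2}\ \lesssim\ N^{-1+2\epsilon}.
\end{equation}

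\smallskip
\noindent\textbf{Delocalization at the edge.} Fix the edge scale $\eta=N^{-2/3+\epsilon}$. The isotropic local law for $W_N'$ gives, w.o.p.\ and uniformly over $z=E+\im\eta$ with $|E-2|\lesssim N^{-2/3+\epsilon}$, that $\langle\bv,(W_N'-z)^{-1}\bv\rangle=m_{SC}(z)+o(1)$. Since $(W(t)-z)^{-1}$ differs from $(W_N'-z)^{-1}$ by a rank-one term, the rank-one resolvent (Sherman--Morrison) identity yields $\langle\bv,(W(t)-z)^{-1}\bv\rangle=\langle\bv,(W_N'-z)^{-1}\bv\rangle / (1+tJ\,\langle\bv,(W_N'-z)^{-1}\bv\rangle)$, and near the edge $1+tJ\,\langle\bv,(W_N'-z)^{-1}\bv\rangle=1+tJ\,m_{SC}(z)+o(1)$ is bounded away from $0$ uniformly in $t\in[0,1]$ — this is exactly where sub-criticality enters, since $tJ\le J<1$ and $m_{SC}(2)=-1$. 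Interlacing together with edge rigidity of the $\mu_j'$ places $\lambda_j(t)$ within $N^{-2/3+\epsilon}$ of $2$ for every $j\le N^{\epsilon}$, so for such $j$,
\begin{equation*}
  \frac{|\langle\bv,u_j(t)\rangle|^{2}}{\eta}\ \le\ \Im\bigl\langle\bv,(W(t)-\lambda_j(t)-\im\eta)^{-1}\bv\bigr\rangle\ \lesssim\ \Im m_{SC}(\lambda_j(t)+\im\eta)+o(1)\ \lesssim\ N^{-1/3+\epsilon},
\end{equation*}
where the last step uses $\Im m_{SC}(E+\im\eta)\lesssim\sqrt{|E-2|+\eta}$ near the edge and $|\lambda_j(t)-2|\lesssim\eta$, with implicit constants depending only on $1-J$. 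Multiplying through by $\eta=N^{-2/3+\epsilon}$ gives the pointwise bound $|\langle\bv,u_j(t)\rangle|^{2}\lesssim N^{-1+2\epsilon}$.

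\smallskip
\noindent\textbf{Uniformity in $t$, and the main obstacle.} To upgrade the pointwise bound to the supremum in \eqref{eq:sticky-deloc}, run the estimate on an $N^{-C}$-grid of values of $t$ with $C$ large, take a union bound (permissible because the local law holds w.o.p.), and control the oscillation between consecutive grid points using that $t\mapsto\langle\bv,(W(t)-\lambda_j(t)-\im\eta)^{-1}\bv\rangle$ is Lipschitz with constant $O(\eta^{-2})$ (from $\|(W(t)-z)^{-1}\|\le\eta^{-1}$ and the $J$-Lipschitz dependence of $W(t)$ and of $\lambda_j(t)$), which is $o(1)$ after one grid step. The constraint $\epsilon\in(0,1/6)$ simply keeps the indices $j\le N^{\epsilon}$ comfortably inside the edge window where the isotropic law and edge rigidity are sharp. \emph{The main obstacle} is precisely \eqref{eq:sticky-deloc}: establishing, with overwhelming (not merely high) probability and uniformly in the interpolation parameter, the isotropic delocalization along $\bv$ of the top $N^{\epsilon}$ eigenvectors of the sub-critically rank-one--deformed Wigner matrix. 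The remaining ingredients — the interpolation, the Hellmann--Feynman identity, and the interlacing bounds — are routine once \eqref{eq:sticky-deloc} is in hand.
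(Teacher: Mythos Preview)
Your interpolation approach is correct and genuinely different from the paper's. The paper works statically at the endpoint: it uses the secular equation $\bv^{*}R(\lambda_j)\bv=-1/J$ for the resolvent $R=(W_N'-\cdot)^{-1}$, splits the spectral sum $\sum_\alpha |\langle\bv,\mathbf{u}^{(\alpha)}\rangle|^2/(\mu_\alpha'-\lambda_j)$ into an edge part ($\alpha\le 2N^\epsilon$) and a bulk part, shows the bulk part equals $-1+O(N^{-1/3+2\epsilon})$ by comparing it to its value at the deterministic point $x=2+N^{-2/3+\epsilon}$ outside the spectrum, and then extracts $\lambda_j-\mu_j'$ from the resulting lower bound on the edge part combined with isotropic delocalization. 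Your Hellmann--Feynman/Sherman--Morrison route bypasses the edge/bulk decomposition entirely and is arguably more transparent; it also extends naturally to other one-parameter deformations. Two points need tightening. First, the $+o(1)$ in your displayed chain must be the \emph{quantitative} local-law error $O(N^{\epsilon'}\Psi(z))$, which at $\eta=N^{-2/3+\epsilon}$ near the edge is $O(N^{-1/3})$ and hence absorbable into $\Im m_{SC}\asymp N^{-1/3+\epsilon/2}$; a bare $o(1)$ dominates $N^{-1/3+\epsilon}$ and the inequality as written does not close. Second, interlacing alone does not bound $\lambda_1(t)$ from above: either invoke the paper's opening step (the secular equation at $x$ immediately gives $\lambda_1\le x$), or bootstrap---a first pass with only $\Im m_{SC}=O(1)$ already yields $\lambda_1(t)-\mu_1'\lesssim\eta$, which places $\lambda_1(t)$ in the edge window, after which your sharper estimate applies. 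Finally, your $t$-grid is unnecessary: the isotropic local law holds on a single w.o.p.\ event uniformly over $z\in\mathbf{S}(\tau)$, so once each $\lambda_j(t)$ is known to remain in the edge window the overlap bound is automatic for every $t\in[0,1]$.
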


\textcolor{black}{A proof of this proposition is given in \cref{proof:sticky}.} We remark that \cite{Knowles2013b} state the above bound for a constant number of top eigenvalues \textcolor{black}{and a somewhat different definition of Wigner matrices.} Our proof reproduces their arguments for up to $N^{\epsilon}$ eigenvalues \textcolor{black}{under definition \ref{Wigner definition} of a Wigner matrix.}


\subsection{Proof of \cref{lemma three points} for Wigner case} 

 \textit{Part (i):} Let $\zeta_{Nj}(W) = N^{2/3}(\lambda_j(W)-2)$, where $\lambda_j(W)$ denotes the $j$-th largest eigenvalue of matrix $W$.
  The convergence in distribution of $\mathbf{\zeta}_N = \mathbf{\zeta}_N(W_N)$ to
  $\mathbf{\zeta} = (\TW_{\frac{2}{\alpha},1}, \ldots, \TW_{\frac{2}{\alpha},k})$
  holds by definition, as discussed in \cref{sec: proof three poins}.
We use  \cref{prop:joint-convergence}
to carry this over to 
convergence of $\mathbf{\zeta}_N' = \mathbf{\zeta}_N(W_N')$: the key step is approximation by the
Stieltjes functional $g(W,E)$ below, and then use of the derivative
bounds in \cref{prop:system}.

Introducing the rescaling $E(s) = 2+N^{-2/3}s$, we may write
\begin{equation}
  \label{eq:switch}
  \bo \{ \zeta_{Nj}(W) < s \} = \bo \{ \mathcal{N}_W(E(s),\infty) < j \},
\end{equation}
where $\mathcal{N}_W(E,\infty)$ denotes the number of eigenvalues of $W$ that fall into the interval $[E, \infty)$. \textcolor{black}{Fix a small positive $\epsilon$.}
Let $E_\infty = 2 + 2N^{-2/3 + \epsilon}, \eta = N^{-2/3 - 9\epsilon}$
and
\begin{equation*}
  g(W, E)
    = \frac{N}{\pi} \int_E^{E_\infty} \Im s_W(y + \im \eta) \diff y.
\end{equation*}
Corollary 17.3 of \cite{ey17}
says that for Wigner matrices $W$, $|E-2| \leq N^{-2/3+\epsilon}$ and
$\ell = \frac{1}{2} N^{-2/3-\epsilon}$, and with overwhelming
probability, we have inequalities
\begin{equation}
  \label{eq:ey17.3}
  \mathcal{N}_W(E+\ell,\infty) - N^{-\epsilon}
  \leq g(W,E)
  \leq \mathcal{N}_W(E-\ell,\infty) + N^{-\epsilon}.
\end{equation}
\cite{ey17} use a somewhat different definition of Wigner matrices, but we show that \eqref{eq:ey17.3} still holds with \cref{Wigner definition} in \cref{sec:proof of eq 6.6}.
 Let \(G_j\) be a smooth \textcolor{black}{decreasing} function such that
\begin{equation*}
  G_j(x) =
  \begin{cases}
    0 & x \geq j -1/3 \\
    1 & x \leq j -2/3.
  \end{cases}
\end{equation*}
From \eqref{eq:ey17.3} we have w.o.p. for $W = W_N$ and $W_N'$ that
\begin{equation*}
  \bo \{ \mathcal{N}_W(E+\ell,\infty) < j \}
  \geq G_j(g(W,E))
  \geq \bo \{ \mathcal{N}_W(E-\ell,\infty) < j \}.
\end{equation*}
Applying this with $E = E(s) + \ell$ along with \eqref{eq:switch},
we obtain
\begin{equation*}
      \bo \{ \zeta_{Nj}(W) < s + N^{-\epsilon} \}\geq G_j(g(W,E(s)+\ell))
  \geq \bo \{ \zeta_{Nj}(W) < s \},
\end{equation*}
which implies
\begin{equation}
    \label{eq:lgstsw}
   \bo \{ \zeta_{Nj}(W) \leq s-N^{-\epsilon} \}\leq  G_j(g(W,E(s)+\ell))  \leq \bo \{ \zeta_{Nj}(W) \leq s + N^{-\epsilon}\}.
\end{equation}
\textcolor{black}{Setting $Q_j(W,s) = G_j(g(W,E(s)+\ell))$, we obtain bounds
\eqref{eq:Qj}.}

The functions $Q_j(\cdot,s)$ satisfy Proposition \ref{prop:system}
(2) with $\delta_{j,N} = N^{-1/3 + O(\epsilon)}$ and hence also
condition F. Consequently the joint convergence for
$\bxi_N' = \bxi_N(W_N')$ follows
from Proposition \ref{prop:joint-convergence}.

The result follows for subcritically-spiked Wigner matrix $W_{J,N}' = {W_N'} + J \vec{v}\vec{v}^*$ applying \cref{prop:sticky} so that
\begin{align*}
    \bigl(N^{\frac{2}{3}}(\lambda_1 - 2), \dotsc, N^{\frac{2}{3}}(\lambda_k - 2)\bigr)
    &= \bigl(N^{\frac{2}{3}}(\mu_1' - 2), \dotsc, N^{\frac{2}{3}}(\mu_N' - 2)\bigr) + o_{\Pr}(N^{-1/3 + 2\epsilon}).
\end{align*}

\textit{Part (ii) and part (iv):} The statement of part (ii) \textcolor{black}{is implied by} $ N^{2/3}(\lambda_{1} - 2) = O_{\Pr}(1) $, which follows from the fact that $ N^{2/3}(\lambda_{1} - 2) \drightarrow \TW_{{2}/{\alpha},1}$.
 Similarly, we have that
\[
    N^{2/3}(\lambda_1 - \lambda_2) \drightarrow \TW_{{2}/{\alpha},1} - \TW_{{2}/{\alpha},2},
\]
and the latter is $ \Theta_{\Pr}(1) $, since we already know that $ N^{2/3}(\mu_1 - \mu_2) \drightarrow \TW_{{2}/{\alpha},1} - \TW_{{2}/{\alpha},2} $, and $ \mu_1 - \mu_2 = \Theta_{\Pr}(N^{-2/3}) $ from the proof for the Gaussian ensembles.

\textit{Part (iii) and part (v):} We first derive the corresponding bounds for $ W_N' $, i.e. for the eigenvalues $\mu_j'$. We use the counting function approximation and it's universality, similar to how we did in the proof of \textcolor{black}{part (i)}. 
It follows from \cref{eq:ey17.3} that w.o.p. for both $W_N, W_N'$ in place of $W$,
\begin{equation}\label{counting_to_integral}
    g(W, E + \ell) - N^{-\varepsilon} \leq \mathcal{N}_{W}(E, \infty) \leq g(W, E - \ell) + N^{-\varepsilon}.
\end{equation}
For part (iii), first take the expectation of the above inequalities. Notice that $ \mathcal{N}_{W}(E, \infty) \leq N $ a.s. Similarly, using $ \Im (\lambda - y - \im \eta)^{-1} \leq \eta^{-1} $, it holds a.s. that $ g(E, W) \leq \frac{N}{\pi} \eta^{-1} (E^{\infty} - E) $, which can be bounded as $ O(N^{1 + O(\epsilon)}) $. Since the complement to a w.o.p. event happens with probability at most $ N^{-D} $ for any $D > 0$ and large $N$, we conclude that
\[
    \E g(W, E + \ell) - N^{-\varepsilon} \leq \E \mathcal{N}_{W}(E, \infty) \leq \E g(W, E - \ell) + N^{-\varepsilon} .
\]
Using Propositions~\ref{prop:multi-matching} and \ref{prop:system} part(2) applied with $G(x) = x$, we have that
\[
    \abs{\E g(W_N, E) - \E g(W_N', E)} \lesssim N^{-1/3 + O(\epsilon)}.
\]
Take $ E_{x} = 2 - x N^{-2/3} $ and $ E_{x+1} = 2 - (x + 1) N^{-2/3} $ so that $ E_{x} - 2\ell \geq E_{x+1}$ for large $N$. Then,
\begin{align*}
    \E \mathcal{N}_{W_N'}(E_x, \infty) & \leq \E g(W_N', E_x - \ell) + N^{-\varepsilon} \\
    & \leq \E g(W_N, E_x - \ell) + N^{-\epsilon} + O(N^{-1/3 + O(\epsilon)}) \\
    & \leq \E \mathcal{N}_{W_N}(E_{x+1}, \infty) + 2N^{-\epsilon} + O(N^{-1/3 + O(\epsilon)}) \\
    &\leq O(1) \, ,
\end{align*}
where we used that $ \E \mathcal{N}_{W_N}(E_{x+1}, \infty) = O(1) $ for the Gaussian case, which is shown in Section~\ref{sec: proof three poins}. To extend part (iii) to the spiked case, observe that the values $ \#\{ j: \; \lambda_j \geq 2 - x N^{-2/3}\} $ and $ \#\{j: \; \mu_{j}' \geq 2 - xN^{-2/3} \} $ differ by at most one thanks to the Cauchy interlacing theorem.

For part (v), 
let us show that there exists $\kappa'$ such that if $b_N \to \infty$
so that $b_N = O(N^\epsilon)$ for all $\epsilon >0$, then a.a.s.
\begin{equation*}
  \mathcal{N}_{W'}(E_{b_N},\infty) \geq \kappa' b_N^{2/3},
\end{equation*}
where $E_{b_N} = 2- b_N N^{-2/3}$. 
%
From \eqref{counting_to_integral} we have for $b_N = O(N^\epsilon)$, w.o.p.
\begin{equation}
  \label{eq:bracket}
  \mathcal{N}_W(E_{b_N - N^{-\epsilon}}, \infty) - N^{-\epsilon}
  \leq g(W, \textcolor{black}{E_{b_N - N^{-\epsilon} / 2}})
  \leq \mathcal{N}_W(E_{b_N}, \infty) + N^{-\epsilon},
\end{equation}
where we note that $E_{b_N}+2 \ell = E_{b_N - N^{-\epsilon}}$ and $E_{b_N}+ \ell = E_{b_N - N^{-\epsilon} / 2}$. Below we denote for short $ g(W) = g(W, E_{b_N - N^{-\varepsilon}/2}) $.

Let $\mu = 5 \kappa'/4$ and $H$ be a smooth decreasing function on
$[0,\infty)$ such that
\begin{equation*}
  H(x) =
  \begin{cases}
    1 & x \leq \mu \\
    0 & x \geq 2 \mu,
  \end{cases}
\end{equation*}
and define $G(x) = H(x/b_N^{3/2})$, so that
$\| G^{(j)} \|_\infty \leq b_N^{-3j/2} = o(1) $. 


We apply Proposition~\ref{prop:multi-matching} and Proposition~\ref{prop:system} part (2), from where it follows that $ \E G(g(W_N')) \leq \E G(g(W_N)) + O(N^{-1/3 + O(\epsilon)}) $.
 
Using \eqref{eq:bracket} in the first and fourth lines below, and
setting $\mu = 5\kappa'/4$, we obtain
\begin{align*}
  \mP\{ \mathcal{N}_{W_N'}(E_{b_N},\infty) < \kappa' b_N^{3/2} \}
  & \leq  \mP \{ g(W_N')/b_N^{3/2} \leq \mu \} + O(N^{-D}) \\
  & \leq \E G(g(W_N')) + O(N^{-D}) \leq \E G(g(W_N))  + O(\delta_N) \\
  & \leq \mP \{ g(W_N)/b_N^{3/2} \leq 2\mu \} + O(N^{-1/3 + O(\epsilon)}) \\
  & \leq  \mP\{ \mathcal{N}_{W_N}(E_{b_N - N^{-\epsilon}},\infty) <
    \kappa  b_N^{3/2} \} + O(N^{-1/3 + O(\epsilon)})
\end{align*}
if we set $2 \mu = 3 \kappa/4$ say. The final bound is $o(1)$,
according to the Gaussian case applied to $b_N \leftarrow b_N -
N^{-\epsilon}$, and so if we take $\kappa' = 4 \mu/5 = 3 \kappa/10$,
we obtain the claimed result.

Finally, part (v) extends trivially to the spiked case using Cauchy interlacing theorem, since
\[
\pushQED{\qed}
    \#\{ j: \; \lambda_j > 2 - b_NN^{-2/3} \} \geq \#\{ j: \; \mu_j' > 2 - b_NN^{-2/3} \}  \, .\qedhere
    \popQED
\]

\subsection{\textcolor{black}{Proof of lemma \ref{lemma derivatives of G}  and lemma \ref{lemma lambda1statistic} for Wigner case}}
\label{sec: proof lemma deriv G Wigner}
\color{black}

Let us first extend lemma \ref{lemma derivatives of G}  and eq. \ref{eq:p4-bd} of lemma \ref{lemma lambda1statistic} to the non-spiked Wigner case, with $W_N'$ in place of $W_N$ and $ \mu_j' $ in place of $\mu_j$.

First we rewrite \eqref{eq:DG-bound1} and \eqref{eq:p4-bd} in terms of
Stieltjes transforms.
For $l \geq 1$, we have
\begin{equation*}
  G^{(l)}(z) = \beta \bo(l=1) + s_W^{(l-1)}(z)
\end{equation*}
\textcolor{black}{for $W_N$ and $W_n'$ in place of $W$.}
Suppose that $|E-2| \lesssim \check{\sigma}_N N^{-2/3}$, \textcolor{black}{where
\[
\check{\sigma}_N:=(\log N)^{O(\log\log N)},
\]} 
and define
\begin{equation*}
  g_0(W)= \textcolor{black}{g_0(W,E) :=} N^{-2l/3+1} s_W^{(l-1)}(E)
         = (l-1)! N^{-2l/3} \sum_{j=1}^N (\lambda_j\textcolor{black}{(W)}-E)^{-l}.
\end{equation*}
\textcolor{black}{Here $\lambda_j(W)$ denotes the $j$-th largest eigenvalue of $W$.}
Then \eqref{eq:DG-bound1} and \eqref{eq:p4-bd} take the simpler form
\begin{equation}
  \label{eq:g0prop}
  g_0(W\textcolor{black}{, E}) = \textcolor{black}{\alpha_N} + \sigma_N Z_N, \qquad
 Z_N = o_\Pr(1) \text{\; {or} \;} O_\Pr(1)
\end{equation}
\textcolor{black}{with the following $\alpha_N$, $\sigma_N$ and $Z_N$.}

In the case of \eqref{eq:DG-bound1}, we take $E = \hat{\gamma}$. Further, 
in \eqref{eq:g0prop} we have $Z_N = o_\Pr(1)$,
\begin{align*}
  \textcolor{black}{\alpha_N} = -N^{1/3}\bo(l=1) + c_l |b|^{3-2l} \log^{3/2-l} N, \qquad
  \sigma_N = \log^{-L}N, \qquad L =
             \begin{cases}
               1/4 & l=1 \\
               l - 3/2  &  l \geq 2
             \end{cases} 
\end{align*}
with $c_1 = 1, c_2 = 1/2$ and with the general
form of $c_l$ visible in \eqref{eq:DG-bound1}.

In the case of \eqref{eq:p4-bd}, we take $E = 2+CN^{-2/3}$, and for $l
= 1, 2$ in \eqref{eq:g0prop} we have
\begin{equation*}
  \textcolor{black}{\alpha_N} = -N^{1/3}\bo(l=1), \qquad \sigma_N = 1, \qquad Z_N = O_\Pr(1).
\end{equation*}

Thus 
the validity of \eqref{eq:g0prop} for $W=W_N$ drawn from
Gaussian ensembles \textcolor{black}{has been already established in sections \ref{sec: proof lemma deriv G} and \ref{sec:lemma2.9 Gaussian}}. We wish to carry this over to
$Z_N' = (g_0(W_N')-\textcolor{black}{\alpha_N})/\sigma_N$ for a Wigner matrix $W_N'$.
To do this, we approximate $g_0(W)$ by the Stieltjes functional
\begin{equation*}
  g(W) = N^{-2l/3 +1} \Re s_W^{(l-1)}(E+ \im \eta).
\end{equation*}

\bigskip

\begin{lemma} \ (Approximation step) \
  \label{lem:complex-inv-moment}
  Let \(W\) be an \(N\times N\) Wigner matrix satisfying \textcolor{black}{Assumption W}
 and let  \(E \in \R\) be such that \(\abs{E - 2} \leq
 N^{-\frac{2}{3}} \check{\sigma}_N\).   
  Let \(\epsilon > 0\) and define \(\eta = N^{-\frac{2}{3} - 3\epsilon}\).

  Then, for all \(l \in \Z_{+}\), we have with high probability that
  \begin{align}
    \label{eq:off-real1}
    N^{-2l/3+1} s_W^{(l-1)}(E)
      = N^{-2l/3+1} \Re s_W^{(l-1)}(E + \im \eta) +O(N^{-\epsilon}).
  \end{align}

  \proof

  Let \(\epsilon_0 = \epsilon/(l+1)\).
  By eigenvalue non-concentration \textcolor{black}{(see proposition 25 of \cite{johnstone2020logarithmic})}, 
  there then exists a constant \(d > 0\) such that the event
  \begin{align*}
    E_N
    &= \Bigl\{\min_{1 \leq j \leq N} \abs{\lambda_j(W) - E} \geq N^{-\frac{2}{3} - \epsilon_0}\Bigr\}
  \end{align*}
  holds with probability at least \(1 - N^{-d}\).
  The rest of the argument occurs on the event \(E_N\).
  
  Now the function \(\sum_{j=1}^N \frac{1}{(z - \lambda_j(W))^l}\) is holomorphic in the open disk \(\{z : \abs{z - E} < N^{-\frac{2}{3} - \epsilon_0}\}\).
  Since \(\epsilon_0 < \epsilon\), the vertical segment \(\gamma\) connecting \(E\) to \(E + \im \eta\) lies entirely within this disk, so the fundamental theorem of calculus applies, rendering
  \begin{align*}
    \abs[\Big]{\Re \sum_{j=1}^N \frac{1}{(E - \lambda_j(W) + \im \eta)^l} - \sum_{j=1}^N \frac{1}{(E - \lambda_j(W))^l}}
    &= \abs[\Big]{\Re \int_\gamma \sum_{j=1}^N -\frac{l}{(z - \lambda_j(W))^{l+1}} \diff z} \\
    &\leq l \eta  \sum_{j=1}^N \frac{1}{\abs{E - \lambda_j(W)}^{l+1}}.
  \end{align*}
  
   By lemma 26 of \cite{johnstone2020logarithmic},
   this is 
  \(O(N^{-\frac{2}{3} -3\epsilon} \cdot
  N^{(\frac{2}{3}+\epsilon_0)(l+1)+\epsilon})
  = O(N^{\frac{2}{3}l-\epsilon})\)
  w.o.p. on \(E_N\), from which the result follows.\qed
\end{lemma}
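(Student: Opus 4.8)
The plan is to view $N^{-2l/3+1}s_W^{(l-1)}(E) = (l-1)!\,N^{-2l/3}\sum_{j=1}^N(\lambda_j - E)^{-l}$ as the value at $z = E$ of the function $z \mapsto \sum_j (z-\lambda_j)^{-l}$, which is holomorphic away from the spectrum, and to show that sliding the evaluation point a distance $\eta$ off the real axis changes the sum by only $O(N^{\frac{2}{3}l - \epsilon})$. First I would fix $\epsilon_0 = \epsilon/(l+1)$ and work on the eigenvalue non-concentration event $E_N = \{\min_j |\lambda_j - E| \ge N^{-2/3-\epsilon_0}\}$, which holds with probability $1 - O(N^{-d})$ for some $d > 0$ by the near-edge non-concentration estimate for Wigner matrices satisfying Assumption~W (proposition~25 of \cite{johnstone2020logarithmic}); the hypothesis $|E - 2| \le N^{-2/3}\check\sigma_N$ puts $E$ in exactly the window where that estimate applies. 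On $E_N$ the function $z \mapsto \sum_j(z-\lambda_j)^{-l}$ is holomorphic on the open disk of radius $N^{-2/3-\epsilon_0}$ about $E$, and since $\eta = N^{-2/3-3\epsilon}$ with $\epsilon_0 < 3\epsilon$, the vertical segment $\gamma$ from $E$ to $E + \im\eta$ lies inside that disk.

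Next I would apply the fundamental theorem of calculus along $\gamma$:
\[
  \Re\sum_{j=1}^N \frac{1}{(E+\im\eta - \lambda_j)^l} - \sum_{j=1}^N\frac{1}{(E-\lambda_j)^l}
  = \Re\int_\gamma \Bigl(-l\sum_{j=1}^N\frac{1}{(z-\lambda_j)^{l+1}}\Bigr)\diff z,
\]
so the left-hand side is at most $l\eta\,\sup_{z\in\gamma}\sum_j |z-\lambda_j|^{-(l+1)} \le l\eta\sum_j|E-\lambda_j|^{-(l+1)}$, using $|E + \im t - \lambda_j| \ge |E-\lambda_j|$. The one remaining ingredient is the bound $\sum_j |E - \lambda_j|^{-(l+1)} = O\bigl(N^{(2/3+\epsilon_0)(l+1)+\epsilon}\bigr)$, valid with high probability on $E_N$; this follows from an edge rigidity/gap estimate (lemma~26 of \cite{johnstone2020logarithmic}), the nearest eigenvalue contributing $\lesssim N^{(2/3+\epsilon_0)(l+1)}$ and the $O(N)$ remaining terms, grouped by distance to $E$, contributing at most an extra factor $N^\epsilon$.

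Putting these together, the error in $\sum_j(E+\im\eta-\lambda_j)^{-l} - \sum_j(E-\lambda_j)^{-l}$ is $O\bigl(N^{-2/3-3\epsilon}\cdot N^{(2/3+\epsilon_0)(l+1)+\epsilon}\bigr)$, whose exponent collapses to $\tfrac{2}{3}l - \epsilon$ because $\epsilon_0(l+1) = \epsilon$; multiplying by the prefactor $(l-1)!\,N^{-2l/3}$ gives the claimed $O(N^{-\epsilon})$, hence \eqref{eq:off-real1} on the high-probability event $E_N$. I do not expect a serious obstacle: the two substantive inputs --- near-edge eigenvalue non-concentration and the inverse-gap sum bound --- are precisely the cited propositions, and everything else is a one-line Cauchy estimate together with the triangle inequality. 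The only point needing care is the calibration $\epsilon_0 = \epsilon/(l+1)$, chosen so that the polynomial loss from working at distance $N^{-2/3-\epsilon_0}$ from the spectrum is exactly offset, leaving a net negative power of $N$ uniformly in $l$.
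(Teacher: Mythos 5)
Your proof is correct and is essentially identical to the paper's: same choice $\epsilon_0 = \epsilon/(l+1)$, same non-concentration event $E_N$ (via proposition 25 of \cite{johnstone2020logarithmic}), same fundamental-theorem-of-calculus estimate along the vertical segment, and the same appeal to lemma 26 of \cite{johnstone2020logarithmic} for the inverse-gap sum bound $\sum_j|E-\lambda_j|^{-(l+1)} = O(N^{(2/3+\epsilon_0)(l+1)+\epsilon})$. The only cosmetic difference is that you invoke the (minimal) condition $\epsilon_0 < 3\epsilon$ to place the segment inside the disk where the paper states the stronger $\epsilon_0 < \epsilon$; both hold and both suffice.
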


Lemma \ref{lem:complex-inv-moment} implies that $g(W)$ satisfies
\eqref{eq:g0prop} exactly when $g_0(W)$ does. So we carry out the
Lindeberg swapping with $g(W)$. 

Let $\kappa > 0$ and $H: \mR \to [0,1]$ be a smooth cutoff function
satisfying
\begin{equation*}
  H(x) =
  \begin{cases}
    0 & \text{if } \ |x| \leq \kappa \\
    1 & \text{if } \ |x| \geq 2 \kappa.
  \end{cases}
\end{equation*}
Let $G(x) = H((x-\alpha_N)/\sigma_N)$, so that
$\| G^{(j)} \|_\infty \lesssim \sigma_N^{-j}$.
Propositions \ref{prop:system} (3) 
and \ref{prop:multi-matching} yield
$\E G(g(W_N')) = \E G(g(W_N)) + O(\delta_N)$ \textcolor{black}{with $\delta_N=N^{-1/3+O(\epsilon)}$}.
Write $\check{Z}_N = (g(W_N)- \alpha_N)/\sigma_N$ and similarly for
$\check{Z}_N'$. We conclude that
\begin{align*}
  \Pr( |\check{Z}_N'| > 2\kappa )
  & = \Pr( |g(W_N')-\mu_N| > 2\kappa \sigma_N) \\
  & \leq \E G(g(W_N')) \leq \E G(g(W_N)) + O(\delta_N) \\
  & \leq \Pr(  |\check{Z}_N| > \kappa ) + O(\delta_N).
\end{align*}
A similar bound holds reversing the roles of $W_N$ and $W_N'$.
Consequently $\check{Z}_N'$ is $o_\Pr(1)$ or $O_\Pr(1)$ exactly when
$\check{Z}_N$ is.
From Lemma \ref{lem:complex-inv-moment}, both $\check{Z}_N - Z_N$ and
$\check{Z}_N' - Z_N'$ are $O(N^{-\epsilon})$ with high probability.
Thus \eqref{eq:g0prop} carries over to $W_N'$ and so the validity of \textcolor{black}{\eqref{eq:DG-bound1} and \eqref{eq:p4-bd} are established for Wigner matrices without a spike.}

\medskip

We conclude the proof by extending the bounds from non-spiked Wigner matrix $W_N'$ to the spiked one $W_{J,N}'$ as in \eqref{spiked_definition}. Let us show that \eqref{eq:DG-bound1} and \eqref{eq:p4-bd} still hold in this case. Recall that $\lambda_j$ denote the eigenvalues of $ W_{J,N}' $ in the descending order, and $\mu_j'$ are the eigenvalues of $W_N'$.

Let $ \gamma$ equals to either $ \hat{\gamma} $ from \eqref{eq:DG-bound1} or $ 2 + CN^{-2/3} $ from \eqref{eq:p4-bd}. In addition, let $ i^{*} $ denotes the index of the nearest to $\gamma$  among the eigenvalues $ \mu_i'  $, \textcolor{black}{such that $\mu_i'\leq\gamma$}. Due to the interlacing theorem, we have that $ 0 \leq {\gamma} - \lambda_i \leq {\gamma} - \mu_i' $ for $i > i^{*}$ and $ {\gamma} - \lambda_i \leq {\gamma} - \mu_i' \leq 0 $ for $i < i^{*}$.

Then,
\[
    \sum_{i = 1}^{N} ({\gamma} - \lambda_i)^{-l} \geq \sum_{i = 1}^{N} ({\gamma} - \mu_i')^{-l} - ({\gamma} - \mu_{i^*}')^{-l} + ({\gamma} - \lambda_{i^*})^{-l}
\]
It follows from rigidity \textcolor{black}{(see theorem 2.9 of \cite{BenaychG2018})}, that for any $ \epsilon > 0 $ w.o.p.
\[
    O(N^{-2/3} \log N) = \left(\frac{i^*}{N}\right)^{2/3}  + O\left(N^{-2/3 + \epsilon} (i^{*})^{-1/3}\right),
\]
which implies $ i^{*} = O(N^{\epsilon}) $. Therefore, using proposition~\ref{prop:sticky} we have that w.o.p. $ |\lambda_{i^*} - \mu_{i^{*}}'| \lesssim  N^{-1 + 3\epsilon}$. Furthermore, by the non-concentration result from proposition 25 of \cite{johnstone2020logarithmic} 
we have, with high probability, $ |{\gamma} - \mu_{i^*}'|^{-l} \leq N^{2l/3 + l\epsilon} $. Hence we obtain that  with high probability
\begin{align}
\label{eq:mu and lambda for i*}
    ({\gamma} - \mu_{i^{*}}')^{-l} - ({\gamma} - \lambda_{i^*})^{-l} &= ({\gamma} - \mu_{i^{*}}')^{-l} \left[ 1 - \left( 1 + \frac{\mu_{i^*}' - \lambda_{i^{*}}}{{\gamma} - \mu_{i^*}'}\right)^{-l}\right] \\
    &= O(N^{2l/3 + l\epsilon}) \left[1 - \left(1 + O(N^{-1/3 + C\epsilon})\right)^{-l}\right] \notag\\
    &= O(N^{(2l-1)/3 + C\epsilon})\notag
\end{align}
Taking $\epsilon$ sufficiently small, we obtain that for any $ L > 0$,
\begin{equation}
\label{eq:lower bound for inverse moment}
    \sum_{i = 1}^{N} ({\gamma} - \lambda_i)^{-l} \geq \sum_{i = 1}^{N} ({\gamma} - \mu_i')^{-l} + o_{\Pr}(N^{2l/3} \log^{-L} N) \, .
\end{equation}

To obtain the inequality in the opposite direction, note that with high probability, there exists $C>0$ such that
for all $i\leq i^{**}:=[N^\epsilon]$
\begin{equation}
\label{eq:many comparisons}
(\gamma-\mu_i')^{-l}-(\gamma-\lambda_i)^{-l}=O(N^{(2l-1)/3+C\epsilon}).
\end{equation}
This fact can be established similarly to \eqref{eq:mu and lambda for i*}. Furthermore, by eigenvalue rigidity w.o.p.
\[
\gamma-\mu_{i^{**}}'= \gamma-\gamma_{i^{**}}+\gamma_{i^{**}}-\mu_{i^{**}}'\geq \gamma-\gamma_{i^{**}}- O(N^{-2/3-\epsilon/3}), 
\]
where $\gamma_{i^{**}}$ is the typical location of the $i^{**}$-th eigenvalue, satisfying
\[
2-\gamma_{i^{**}}\asymp \left(\frac{i^{**}}{N}\right)^{2/3}\asymp N^{-2/3+2\epsilon/3}.
\]
Since $\gamma=2+O(N^{-2/3}\log N)$, we obtain w.o.p
\begin{equation}
\label{eq:mu_i** is small}
\gamma-\mu_{i^{**}}'\geq N^{-2/3+\epsilon/2}.
\end{equation}
Therefore, by the interlacing theorem, w.o.p. 
\begin{equation}
\label{eq:opposite}
(\gamma-\lambda_{i+1})^{-l}\leq (\gamma-\mu_{i}')^{-l}
\end{equation}
for all $i\geq i^{**}$.

Using \eqref{eq:many comparisons}-\eqref{eq:opposite}, we have with high probability
\begin{align*}
    \sum_{i=1}^N(\gamma-\lambda_i)^{-l}\leq &\sum_{i=1}^N(\gamma-\mu_i')^{-l}-\sum_{i=1}^{i^{**}}\left[(\gamma-\mu_i')^{-l}-(\gamma-\lambda_i)^{-l}\right]+(\gamma-\mu_{i^{**}})^{-l}-(\gamma-\mu_{N})^{-l}\\
    =&\sum_{i=1}^N(\gamma-\mu_i')^{-l}+O(N^{(2l-1)/3+(C+1)\epsilon})+O(N^{2l/3-\epsilon l/2})+O(1)
\end{align*}
Choosing $\epsilon$ sufficiently small, we obtain that for any $L>0$,
\[
\sum_{i=1}^N(\gamma-\lambda_i)^{-l}\leq \sum_{i=1}^N(\gamma-\mu_i')^{-l}+o_{\Pr}(N^{2l/3}\log^{-L} N).
\]
Combining this with \eqref{eq:lower bound for inverse moment}, we conclude that
\[
\sum_{i=1}^N(\gamma-\lambda_i)^{-l}= \sum_{i=1}^N(\gamma-\mu_i')^{-l}+o_{\Pr}(N^{2l/3}\log^{-L} N).
\]

\textcolor{black}{Taking $L>\max\left\{\frac{1}{4}, l-\frac{3}{2}\right\}$}, we see that the difference of $ o_{\Pr}(N^{2l/3} \log^{-L} N) $  between the spiked statistics and non-spiked one is sufficient for  \eqref{eq:DG-bound1} and \eqref{eq:p4-bd} to hold in the spiked case as well. 

\textcolor{black}{Finally, equation \ref{inv mom 1} follows from \eqref{eq:p4-bd} in exactly the same way as in the Gaussian case, considered in section \ref{sec:lemma2.9 Gaussian}.}\qed

\subsection{Proof of \cref{independence} for Wigner case}
\label{sec:proof-indep-wigner}

First we consider the case with no spike, i.e. $J = 0$ and eigenvalues $\mu_j'$ of $W_N'$ in place of eigenvalues $\lambda_j$ of $W_{J,N}'$. In this case, this is an immediate consequence of proposition
\ref{prop:joint-convergence} and previous arguments for the
log-determinant in \cite{johnstone2020logarithmic} and the
largest eigenvalue in \cref{lemma three points} (i).
Indeed, in the proof of Proposition~{27} of \cite{johnstone2020logarithmic} we show that
\[
    \xi_{1N}(W_N') = \tau_N^{-1} (\bar{\mu}_N-g_0(W_N')) + o_{\Pr}(1),
\]
where for $\gamma_N = N^{-2/3 - 2\epsilon}$ with $\epsilon > 0$ small
\[ 
    g_0(W) = \int_{\gamma_N}^{N^{100}} \Im s_W(2 + i \eta) \diff\eta,
    \qquad \bar{\mu}_N = N/2-\frac{\alpha-1}{6}\log N + N \log(N^{100}).
\]
It is enough to consider joint convergence of
\begin{equation*}
  \tilde{\xi}_{1N}(W_N') = \tau_N^{-1}(\bar{\mu}_N-g_0(W_N'))
  \qquad\text{and}\qquad \xi_{2N}(W_N'),
\end{equation*}
since $(\xi_{1N}(W_N'),\xi_{2N}(W_N')) = (\tilde{\xi}_{1N}(W_N'), \xi_{2N}(W_N')) +
o_\Pr(1)$. 

In Proposition \ref{prop:joint-convergence}, \textcolor{black}{let us set $Q_1(W,s)$ and $Q_2(W,s)$ as follows. We start from $Q_2(W,s)$. Using \eqref{eq:lgstsw}, take $Q_2(W,s):=G_1(g(W,E(s) \pm \ell)$. Then, as explained immediately after \eqref{eq:lgstsw}, such $Q_2(W,s)$ satisfies condition $F(\delta_{2,N})$ with $\delta_{2,N}=N^{-1/3+O(\epsilon)}$. Furthermore, \eqref{eq:lgstsw} says that
\begin{equation}
\label{eq:Q1}
\bo\{\xi_{N2}(W)\leq s-N^{-\epsilon}\}\leq Q_2(W,s) \leq \bo\{\xi_{N2}(W)\leq s+N^{-\epsilon}\}.
\end{equation}}

\textcolor{black}{
Turning to $Q_1(W,s)$, let $H: \mathbb{R} \to [0,1]$ be a smooth decreasing function such that 
\begin{equation*}
  H(x) =
  \begin{cases}
    1 & \text{if } \ x \leq -\eta_N \\
    0 & \text{if } \ x \geq \eta_N.
  \end{cases}
\end{equation*}
Define $Q_1(W,s)=G_s(g_0(W)):=H(\tau_N^{-1}(\bar{\mu}-g_0(W))-s)$. Then clearly
\begin{equation}
\label{eq:Q2}
   \bo\{\xi_{N1}(W)\leq s-\eta_N\}\leq Q_1(W,s) \leq \bo\{\xi_{N1}(W)\leq s+\eta_N\}. 
\end{equation}
Observe that if we choose $\eta_N=(\log N)^{-1/4}$, then  
$\norm{G_s^{(j)}}_\infty\lesssim(\tau_N\eta_N)^{-j}\lesssim(\log N)^{-j/4}$. Then \cref{prop:system} (1) implies that $Q_1(\cdot,s)$ satisfy condition F with $\delta_{1N}=(\log N)^{-1/4}$.}

\textcolor{black}{Convergence of the Gaussian versions
$(\xi_{1N},\xi_{2N}) \stackrel{\rm d}{\to} \mathcal{N}(0,1) \times
\TW_{2/\alpha}$ has been established in \cref{sec:independence for Gaussian case}. Hence, the conclusion for $(\xi_{1N}(W_N'),\xi_{2N}(W_N'))$ follows now from equations \eqref{eq:Q1}, \eqref{eq:Q2} and \cref{prop:joint-convergence}.}

As for the spiked case, eq. (95) of \cite{johnstone2020logarithmic} shows that $ \xi_{1N}(W_{J,N}') = \xi_{1N}(W_N') + o_{\Pr}(1) $ for a fixed $J \in (0, 1)$. Moreover, thanks to the stickiness property of Proposition~\ref{prop:sticky}, we also have $ \xi_{2N}(W_{J,N}') = \xi_{2N}(W_N') + o_{\Pr}(1) $. Therefore, the limiting distribution of $ (\xi_{1N}(W_{J,N}'), \xi_{2N}(W_{J,N}')) $ does not change as long as the spike is sub-critical. \qed

\section{Technical appendix}
\label{sec: appendix}

\def\gt{\tilde{\gamma}}
\def\Rtone{\tilde{R}^{(1)}}

\subsection{Remarks on contour representation \eqref{eq for z}. } \ 
\label{sec:remarks-cont-repr}
The two cases $\alpha = 1, 2$ may be obtained together 
\textcolor{black}{by following the proof given in \cite[][Lemma 1.3]{Baik2016}.}
Write normalized measure on $S^{n-1} = \{ x \in \R^n : \| x \| = 1
\}$ as $d \Omega/|S^{n-1}|$. By diagonalizing \textcolor{black}{$W_{J,N}$} and changing
variables, the real and complex partition functions are
respectively 
\begin{align*}
  \textcolor{black}{\mathcal{I}_{2,J,N}}
  & = \frac{1}{|S^{N-1}|} \int_{S^{N-1}} \exp \Big\{ \tfrac{1}{2} N\beta
    \sum_{i=1}^N \lambda_i x_i^2 \Big\} \, d \Omega, \\
    \textcolor{black}{\mathcal{I}_{1,J,N}}
  & = \frac{1}{|S^{2N-1}|} \int_{S^{2N-1}} \exp \Big\{ N\beta
    \sum_{i=1}^N \lambda_i (x_{2i-1}^2 + x_{2i}^2) \Big\} \, d \Omega.
\end{align*}
\textcolor{black}{Both partition functions have form
\begin{equation}
\label{eq: the same form}
\frac{1}{|S^{n-1}|}\int_{S^{n-1}}\exp\Big\{\frac{1}{2}n\beta\sum_{j=1}^n\mu_j x_j^2\Big\}\mathrm{d}\Omega,
\end{equation}
where $n=N$, $\mu_j=\lambda_j$ for the real case; and $n=2N$, $\mu_{2i-1}=\mu_{2i}=\lambda_i$ for the complex case. \cite{Baik2016} derive a contour integral representation for such a spherical integral by evaluating
\[
J(z)\equiv \int_{\mathbb{R}^n} \exp\Big\{\frac{1}{2}n\beta\sum_{j=1}^n\mu_jy_j^2\Big\}\exp\Big\{-\frac{1}{2}n\beta z\sum_{j=1}^n y_j^2\Big\}\mathrm{d}y,\quad \operatorname{Re} z>\mu_1
\]
in two different ways. First, directly as a Gaussian integral, and then, using polar coordinates. For the reader's convenience, we reproduce these two steps below.}

\textcolor{black}{Evaluating $J(z)$ directly, we obtain
\begin{equation}
\label{eq: direct J}
J(z)=\left(\frac{2\pi}{\beta n}\right)^{n/2}\prod_{j=1}^n\frac{1}{\sqrt{z-\mu_j}},\quad \operatorname{Re} z>\mu_1.
\end{equation}
On the other hand, using polar coordinates $y=r x $, and further letting $\frac{1}{2}\beta n r^2 =t$, yields
\begin{eqnarray*}
    J(z)&=&\frac{1}{2(\beta n/2)^{n/2}}\int_0^\infty e^{-z t}t^{(n/2)-1}I(t)\mathrm{d}t,\\
    I(t)&\equiv&\int_{S^{n-1}}e^{t\sum_{j=1}^n\mu_j x_j^2}\mathrm{d}\Omega. 
\end{eqnarray*}
Next, \cite{Baik2016} note that $J(z)$ is proportional to the Laplace
transform of $t^{(n/2)-1}I(t)$. They use the inverse Laplace transform together with \eqref{eq: direct J} to arrive at
\[
I(t)=\frac{2\pi^{n/2}}{t^{n/2-1}}\frac{1}{2\pi\mathrm{i}}\int_{\gamma-\im\infty}^{\gamma+\im\infty}e^{zt}\prod_{j=1}^n\frac{1}{\sqrt{z-\mu_j}}\mathrm{d}z,\quad \gamma>\mu_1.
\]
Using this with $t=n\beta/2$ and recalling that $|S^{n-1}|=2\pi^{n/2}/\Gamma(n/2)$, we see that \eqref{eq: the same form} equals
}
%
\begin{align*}
  & \frac{1}{|S^{n-1}|} \int_{S^{n-1}} e^{ \textcolor{black}{\frac{1}{2}n\beta} 
    \sum_{1}^n \mu_i x_i^2 } \, \mathrm{d} \Omega
     = C_n \int_{\gamma - \im \infty}^{\gamma + \im \infty}
        e^{\frac{n}{2} G(z)} \ dz, \qquad \gamma > \mu_1 \\
  &G(z)  = 
  \textcolor{black}{\beta z} - \frac{1}{n} \sum_1^n \log(z - \mu_i),
  \qquad
  C_n   = \frac{1}{2 \pi \im} \frac{\Gamma(n/2)}
  {\textcolor{black}{(n\beta/2)^{n/2-1}}}.
\end{align*}
To recover our \eqref{eq for z}, set \textcolor{black}{$n=N$, $\mu_j=\lambda_j$ for the real case, and $n=2N$, $\mu_{2i-1}=\mu_{2i}=\lambda_i$ for the complex case}.


\textit{Remark:} from this it seems that the exponent in the right side of
the display before \cite[][(4.11)]{Baik2016} should read $N G_H(z)$ and not
$\frac{N}{2} G_H(z)$.

\subsection{\textcolor{black}{Lemma used in the derivation of \eqref{eq:p-decomp}}}\label{lem:conditional-unconditional}
\textcolor{black}{
The following lemma is used in the derivation of \eqref{eq:p-decomp} to show that $\Delta_N=O_{\Pr}(N^{-1/3})$.}
\begin{lemma}
 \textcolor{black}{Let $\{X_N\}$ and $\{ \mathcal{F}_N \}$ be a sequence of random variables and
  sigma-fields respectively. Each of the following are sufficient
  conditions for $X_N = O_\Pr(1)$: \\
  (i) $\E ( |X_N| \big| \mathcal{F}_N) = O_\Pr(1)$, or
  (ii) $\E ( \left.X_N \right| \mathcal{F}_N) = 0$ and $Var (\left.X_N \right| \mathcal{F}_N) = O_\Pr(1)$.}
\end{lemma}
\begin{proof}
  \textcolor{black}{Let $\epsilon >0$ be fixed. For case (i), choose $M_Y$ so that
  $\Omega = \{ \E ( |X_N| \big| \mathcal{F}_N) \leq M_Y \}$ has probability at
  least $1 - \epsilon$ for $N > N_0(\epsilon)$. Then, since $\Omega
  \in \mathcal{F}_N$,
  \begin{align}
    \Pr (|X_N| > M)
      & \leq \E \{ \mathbb{I}(\Omega), \Pr[ |X_N|>M \big| \mathcal{F}_N ]\} +
        \epsilon \label{eq:decomp} \\
      & \leq M_Y/M + \epsilon \leq 2 \epsilon  \notag
  \end{align}
  for $N > N_0(\epsilon)$ using Markov's inequality and setting $M =
  M_Y/\epsilon$.}

  \textcolor{black}{For case (ii), again use (\ref{eq:decomp}), now with
  $\Omega = \{ \Var ( X_N \big| \mathcal{F}_N) \leq M_Y \}$ and appropriate
  $N_0(\epsilon)$. Now from Chebychev's inequality, and with
  $M = \sqrt{M_Y/\epsilon}$,
  \begin{equation*}
    \Pr (|X_N| > M)
    \leq \E \{ \mathbb{I}(\Omega) \Var(X_N \big| \mathcal{F}) \} /M^2+\epsilon
    \leq M_Y/M^2 + \epsilon \leq 2 \epsilon. \qedhere
  \end{equation*}}
\end{proof}

\subsection{Proof of Lemma \ref{lem:OP1}}
\label{sec:OP1proof}

\begin{proof}
  Fix $\epsilon > 0$, and note the decompositions, for $M, \delta > 0$
  to be chosen,
  \begin{align*}
    \Pr\Big( \Big|\frac{Z_N}{X_N-Y_N} \Big| > M \Big)
    & \leq \Pr ( |Z_N| > M \delta) + \Pr(|X_N-Y_N| < \delta) \\
    \Pr (|X_N-Y_N| < \delta)
    & = \E [ \Pr( |X_N-y| < \delta| Y_N = y)]  \leq 2B\sqrt{2\delta}
  \end{align*}
  \textcolor{black}{for large $N$}, 
  with the final inequality using assumptions (ii) and (iii).
  Choose $\delta = \frac{1}{2}\left(\frac{\epsilon}{4B}\right)^2$ and use assumption (i) to yield
  $B_Z = B_Z(\epsilon)$ for which $\Pr( |Z_N| > B_Z) < \epsilon/2$ for
  $N > N(\epsilon)$. Choosing $M = B_Z/\delta$, we obtain
  $\Pr ( |Z_N/(X_N-Y_N)| > M) < \epsilon$ for large $N$, as required.
\end{proof}

\subsection{Check of \texorpdfstring{\eqref{johnstone_ma1}}{}}
\label{Johnstone Ma derivation} \label{one-point-tail}

This follows from results of Tracy and Widom
on representation and scaling of the GUE kernel.
In the notation of \cite{trwi96}, $S_{N}(x, y)$ denotes the kernel of
the GUE scaled to have bulk $ [-\sqrt{2N}, \sqrt{2N}] $.
The kernel is expressed in terms of Hermite functions 
$ \phi_{k}(x) = (\sqrt{\sqrt{\pi}2^k k!})^{-1} e^{-x^2/2} H_{k}(x)
$, with $H_{k}(x)$ being the Hermite polynomials w.r.t. to the weight
$ e^{-x^2}$.
Let $\tau_N = 2^{-1/2} N^{-1/6}$. \cite{trwi96} note that in the
scaling
\begin{equation*}
  \varphi_\tau(s)
  = (N/2)^{1/4} \phi_N(\sqrt{2N} + \tau_N s)  \qquad
  \psi_\tau(s)
  = (N/2)^{1/4} \phi_{N-1}(\sqrt{2N} + \tau_N s),
\end{equation*}
 the classical Plancherel-Rotach asymptotics
for Hermite polynomials, e.g. \cite[][eq. (8.22.14)]{szeg67}, yields
convergence of $N^{-1/6} \varphi_\tau(s)$ and $ N^{-1/6} \psi_\tau(s)$
to the Airy function $\Ai(s)$, and of most relevance here, with estimates
\begin{equation}
  \label{eq:expbds}
  N^{-1/6} \varphi_\tau(s), \quad N^{-1/6} \psi_\tau(s)  = O(e^{-s}),
\end{equation}
uniformly in $N$ and for $s$ bounded below, see also 
\cite[][p. 403]{olve74}.
\cite{trwi96} also give an integral representation of $S_N$, which in the
scaling $S_\tau(s,t) = \tau_N S_N(\sqrt{2N} + \tau_Ns,\sqrt{2N} + \tau_Nt)$
becomes
\begin{equation*}
  S_\tau(s,s)
  = N^{-1/3} \int_0^\infty \varphi_\tau(s+u) \psi_\tau(s+u) \diff u
  \leq C(\gamma) e^{-2s}
\end{equation*}
for $s \geq \gamma$ in view of \eqref{eq:expbds}.
In our notation, with $x = 2 + N^{-2/3} s$, we have
$\sqrt{N/2} \, x = \sqrt{2N} + \tau_N s$ and so
\begin{equation*}
  \rho_N(x)
  = \frac{1}{\sqrt{2N}} S_N(\sqrt{N/2} \, x, \sqrt{N/2} \, x)
      = \frac{1}{\tau_N \sqrt{2N}} S_\tau(s,s).
\end{equation*}
Noting that $\tau_N \sqrt{2N} = N^{1/3}$, we recover \eqref{johnstone_ma1}.

\subsection{Proof of lemma~\ref{lemma_main_eigenvector}}
\label{section_main_eigenvector}

Recall the tridiagonal form of the matrix $M_N$ \eqref{tridiag_M_N}. \textcolor{black}{Denote $a_i/\sqrt{N}$ and $b_i/\sqrt{N}$ as $\tilde{a}_i$ and $\tilde{b}_i$, respectively.}
Since the main eigenvector \( v \) satisfies $ M_N v = \lambda_1 v $, we have 
\[
    v_{1} (\tilde{a}_{1} - \lambda_1) + \tilde{b}_{1} v_{2} = 0,
    \qquad
    v_{i-1} \tilde{b}_{i-1} + v_{i} (\tilde{a}_{i} - \lambda_1) + v_{i + 1} \tilde{b}_{i} = 0 ,
\]
for all $i = 2, \dots, N-1$.
On the event \( \{\tilde{b}_i > 0, i=1,...,N-1\}\), which happens with probability \(1\), we have \( v_1 \neq 0 \), since otherwise all \( v_{i} = 0 \). Hence, we can consider a re-normalization with \( v_{1} = 1\) \textcolor{black}{(so that the statement of \cref{lemma_main_eigenvector} should be re-formulated with $v_i/\norm{v}$ replacing $v_i$)}. We have then
\[
    v_{2} = \frac{\lambda_1 - \tilde{a}_1}{\tilde{b}_1},
    \qquad
    v_{i + 1} = \frac{\lambda_1 - \tilde{a}_i}{\tilde{b}_i} v_{i} - \frac{\tilde{b}_{i-1}}{\tilde{b}_{i}} v_{i-1},
    \qquad
    i \geq 2 \, .
\]

\def\rt{\tilde{r}}
It will be convenient to reformulate the recursion in terms of new variables. Set, 
\[
    u_{i} = \frac{v_{i}}{\prod_{j = 1}^{i - 1} \rt_j \tilde{b}_{j}^{-1}} \, ,
    \qquad
    \rt_j = 1 + \sqrt{1 - \frac{j - 1}{N}} \, .
\]
Notice that
$\tilde{r}_j$ can be thought of as a special case of $r_j$ (see definition \eqref{r_definition}) with \( \theta_N = 1 \).
Since \(\tilde{r}_1 = 2\), we have
\[
    u_{1} = 1,
    \qquad
    u_{2} = \frac{\frac{\lambda_1 - \tilde{a}_1}{\tilde{b}_1}}{ \tilde{r}_1 \tilde{b}_{1}^{-1}} = \frac{\lambda_1}{2} - \frac{\tilde{a}_1}{2} = 1 + O_{\Pr}(N^{-1/2}) \, ,
\]
and for \( i = 2, \dots, N-1\),
\begin{equation}\label{u recursion}
    u_{i + 1} = \frac{\lambda_1 - \tilde{a}_i}{\rt_{i}} u_{i} - \frac{\tilde{b}_{i-1}^2}{\rt_{i-1} \rt_{i}} u_{i-1} \, .
\end{equation}
In the next lemma we control the fluctuations of ratios \( u_{i+1} / u_{i} \). Denote, \( R_i = u_{i + 1} / u_{i} - 1 \).

\begin{lemma}\label{R_i_bound}
We have
\[
    \max_{i \leq N - N^{1/3} \log^3 N} |R_{i}| = o_{\Pr}(N^{-1/3}) \, .
\]
\end{lemma}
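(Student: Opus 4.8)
The plan is to convert the three-term recursion \eqref{u recursion} into a perturbed \emph{contractive} two-term recursion for $R_i = u_{i+1}/u_i - 1$, expand $R_i$ as a damped sum of a martingale and a small deterministic drift, and control its maximum by an exponential concentration inequality together with a stopping-time bootstrap handling the nonlinearity. Throughout write $s_i = \sqrt{1-(i-1)/N}$, so that $\tilde r_i = 1+s_i$ and $(i-1)/N = (1-s_i)(1+s_i)$; note that for $i \le N - N^{1/3}\log^3 N$ one has $s_i \ge N^{-1/3}\log^{3/2}N$.

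First I would derive, by dividing \eqref{u recursion} by $u_i$,
\[
  R_i = \frac{\lambda_1 - \tilde a_i}{1+s_i} - 1 - \frac{\tilde b_{i-1}^2}{(1+s_{i-1})(1+s_i)(1+R_{i-1})}.
\]
Splitting $\tilde b_{i-1}^2 = (i-1)/N + \hat\beta_{i-1}$ into its mean and a centred fluctuation and using $(i-1)/N = (1-s_i)(1+s_i)$, one checks that the mean part equals $c_i R_{i-1} + \eta_i$ with contraction coefficient $c_i = \frac{1-s_i}{1+s_{i-1}} = 1 - \frac{s_{i-1}+s_i}{1+s_{i-1}} \le 1 - s_i$, and with $|\eta_i| \lesssim (Ns_i)^{-1} + R_{i-1}^2$, the first term being just the discretisation error $s_{i-1}-s_i = O((Ns_i)^{-1})$ (indeed $R \equiv 0$ solves the noiseless recursion exactly up to this term). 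Unrolling gives $R_i = \sum_k w_{i,k}(\eta_k + M_k)$ with weights $w_{i,k} = \prod_{j=k+1}^i c_j \le \exp(-\sum_{j=k+1}^i s_j)$ and $M_k$ collecting the fluctuations of $\tilde a_k$ and of $\hat\beta_{k-1}$. Monotonicity of $s_j$ yields the geometric bounds $\sum_k w_{i,k} \lesssim s_i^{-1}$ and $\sum_k w_{i,k}^2 \lesssim s_i^{-1}$.

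Since $\lambda_1$ depends on all the $a_j,b_j$, the process $R_i$ is not adapted to a forward filtration, so I would introduce the auxiliary recursion $R_i^0$ obtained by replacing $\lambda_1$ with $2$ in the recursion and in the base case; then $R_i^0$ is a function of $a_1,\dots,a_i,b_1,\dots,b_{i-1}$ only, and relative to $\mathcal F_i = \sigma(a_1,\dots,a_i,b_1,\dots,b_{i-1})$ the $M_k$ are genuine martingale differences with $\E[M_k^2\mid\mathcal F_{k-1}] \lesssim N^{-1}$ (using $\Var\tilde a_k = \alpha/N$ and $\Var\tilde b_{k-1}^2 = \alpha(k-1)/N^2 \lesssim N^{-1}$). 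The error $E_i = R_i - R_i^0$ obeys a contraction $|E_i| \le c_i'|E_{i-1}| + |\lambda_1-2|/(1+s_i)$ with $c_i' \le 1 - s_i/2$ on a good event, so $\max_i |E_i| \lesssim |\lambda_1-2|\,s_i^{-1} = O_\Pr(N^{-2/3})\cdot O(N^{1/3}\log^{-3/2}N) = o_\Pr(N^{-1/3})$ by $|\lambda_1-2| = O_\Pr(N^{-2/3})$ from \cref{lemma three points}(i); hence it suffices to bound $R_i^0$. Fix $\epsilon_N = N^{-1/3}\log^{1/2}N$ and the stopping time $\tau = \min\{i : |R_i^0| > \epsilon_N\}$. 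On $\{i \le \tau\}$ the drift obeys $\sum_k w_{i,k}|\eta_k| \lesssim (Ns_i^2)^{-1} + \epsilon_N^2 s_i^{-1} = o(N^{-1/3})$, and after truncating on the overwhelming-probability event $\{\max_k|a_k|\lesssim\sqrt{\log N},\ \max_k|b_k^2-\E b_k^2|\lesssim\sqrt{N\log N}\}$ the stopped martingale $N^{(i)}_m = \sum_{k\le m\wedge\tau} w_{i,k}M_k$ has increments $\lesssim\sqrt{\log N/N}$ and predictable quadratic variation $\lesssim (Ns_i)^{-1}$; Freedman's inequality then gives $\Pr(|N^{(i)}_i| > KN^{-1/3}\log^{-1/4}N) \le N^{-cK^2}$, and a union bound over the $\le N$ indices $i$ (with $K$ large) shows $\Pr(\tau \le N - N^{1/3}\log^3 N) \to 0$ and, on the complementary event, $\max_{i\le N-N^{1/3}\log^3 N}|R_i^0| \lesssim N^{-1/3}\log^{-1/4}N = o(N^{-1/3})$. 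Combined with the bound on $E_i$, this proves the lemma.

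The main obstacle is this last step: the target is $o(N^{-1/3})$ rather than merely $O(N^{-1/3})$ \emph{uniformly} in $i$, which forces one to (a) exploit that although $c_i \to 1$ one still has $1-c_i \gtrsim s_i$, giving a summable weight profile with $\sum_k w_{i,k}^2 \lesssim s_i^{-1}$, and (b) use exponential (Freedman-type) rather than $L^2$ concentration so that the union bound over $\sim N$ indices survives — since the weights $w_{i,k}$ depend on $i$, one genuinely runs a separate stopped martingale for each $i$. The accompanying bookkeeping constraint, keeping the threshold in the window $N^{-1/3}\log^{-1/4}N \ll \epsilon_N \ll N^{-1/3}\log^{3/4}N$ so that both the quadratic remainder $\sum_k w_{i,k}R_{k-1}^2$ and the claimed bound are respected, is where one must be careful. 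Decoupling from $\lambda_1$ via the auxiliary recursion $R_i^0$ is routine but essential to make the martingale machinery applicable.
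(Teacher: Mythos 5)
Your proposal is correct and takes essentially the same route as the paper: rewrite the recursion as a perturbed contraction $R_i = c_i R_{i-1} + (\text{noise}) + (\text{drift})$, unroll into a weighted sum with geometrically decaying coefficients summing to $O(s_i^{-1})$, concentrate the noise to scale $N^{-1/3}\log^{-1/4}N$ uniformly in $i$, bound the $\lambda_1$-dependent drift by the Tracy--Widom estimate, and dispose of the quadratic remainder by a bootstrap. The implementation choices you make -- Freedman's inequality with a stopping time in place of the paper's sub-gamma concentration with a deterministic induction on a good event, and the auxiliary $\lambda_1\mapsto 2$ recursion $R_i^0$ needed only to render that stopping time adapted -- are sound but do not change the structure of the argument.
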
 

The proof of this bound repeats one step in the proof of the
log-determinant CLT from \cite{johnstone2020logarithmic}. For the sake
of completeness, we reproduce it in section \ref{proof_R_i_bound} below. We are now
ready to finish the proof of \cref{lemma_main_eigenvector}.

We will show our bound on the following event,
\[
    \mathcal{E}=\left\{\max_{i \leq N - N^{1/3} \log^{3} N} |R_{i}| = o(N^{-1/3}),
    \qquad
    \max_{i\leq N} \tilde{b}_{i} \leq 1 + 2\sqrt{\frac{\log N}{N}}\right\}.
\]
where the bound on $\max \tilde{b}_i$ holds \textcolor{black}{a.a.s.}~due to the concentration of chi-squared variables \cite[theorem~2.3]{boucheron2013concentration}.
Namely, we show that on the event $\mathcal{E}$ for any \( D > 0\),
\[
    \max_{i \leq N - 2 N^{1/3} \log^3 N} \left| \frac{v_i}{v_{i + k}} \right| = O(N^{-D})  ,
    \qquad k = \lceil N^{1/3} \rceil \, ,
\]
which immediately yields the statement of \cref{lemma_main_eigenvector} \textcolor{black}{by noting that $ |v_i| / \| v\| \leq |v_i / v_{i + k}|$.}

First, we have that
\(
    \frac{u_{i}}{u_{i + k}} = \prod_{j = i }^{i + k-1} (1 + R_{j})^{-1} = 1 + o(1) \, 
\)
for all \( i \leq N - 2N^{1/3} \log^3 N\). Then, we have a bound
\begin{equation}\label{vi ratio}
    \left|\frac{v_{i}}{v_{i + k}}\right| = \left|\frac{u_i}{u_{i + k}}\right| \prod_{j = i}^{i + k - 1}  \frac{b_{j}}{ \rt_{j}} \leq (1+o(1))\prod_{j = i}^{i + k - 1}  \frac{1 + 2 \sqrt{\frac{\log N}{N}}}{\rt_{j}} \,.
\end{equation}
Each \( j \) in the range $[i, i + k-1]$ satisfies \( j \leq N - N^{1/3} \log^{3} N\), where we have a lower-bound
\[
    \rt_{j} = 1 + \sqrt{1 - \frac{j-1}{N}} \geq 1 + \sqrt{N^{-2/3} \log^{3} N} = 1 + N^{-1/3} \log^{3/2} N \, .
\]
It remains to plug this bound into \cref{vi ratio}, so we obtain
\begin{align*}
    \left|\frac{v_{i}}{v_{i + k}}\right| &\leq (1+o(1))\left( \frac{1 + 2\sqrt{N^{-1} \log N}}{1 + N^{-1/3} \log^{3/2} N } \right)^{k}
    = (1+o(1))\left( 1 - N^{-1/3} \log^{3/2} N  + o(N^{-1/3}) \right)^{k} \\
    &= e^{-\log^{3/2} N + o(1)} = N^{-\log^{1/2} N + o(1)} \, ,
\end{align*}
which is smaller than any \( N^{-D} \) for large enough \(N\). This completes the proof of \cref{lemma_main_eigenvector}. \qed

\subsubsection{Proof of Lemma~\ref{R_i_bound}}\label{proof_R_i_bound}
Let \( \tilde{c}_{i}=c_i/\sqrt{N} = (b_{i}^2 - i)/{\sqrt{Ni}} \). We rewrite the recurrence \eqref{u recursion} as follows,
\begin{align*}
    R_{i} &= -1 + \frac{\lambda_1}{\rt_i} - \frac{\tilde{a}_i}{ \rt_i} - \frac{i-1}{N \rt_{i-1} \rt_{i}} \frac{1}{1 + R_{i-1}} - \frac{\sqrt{\frac{i-1}{N}}}{\rt_{i-1} \rt_{i}} \frac{\tilde{c}_{i-1}}{1 + R_{i-1}} \, .
\end{align*}
Denoting \( \mt_i = 1 - \sqrt{1 - (i-1)/N} \) we have \( \mt_i = 2 - \rt_i \) and \( \mt_i \rt_i = \frac{i-1}{N} \). Using \( \frac{1}{1 + R_i} = 1 - R_{i} + \frac{R_i^2}{1 + R_{i}} \), we have a linear expansion
\begin{align*}
    R_{i} =&\; -1 + \frac{\lambda_1}{\rt_i} - \frac{\tilde{a}_i}{\rt_i} - \frac{\mt_{i}}{\rt_{i-1}} (1 - R_{i-1}) - \frac{\mt_{i}}{\rt_{i-1}} \frac{R_{i-1}^2}{1 + R_{i-1}} - \frac{\sqrt{\frac{i-1}{N}}}{\rt_{i-1} \rt_{i}} \frac{\tilde{c}_{i-1}}{1 + R_{i-1}} \\
    =& \; \frac{-\rt_i + \lambda_1 - \frac{\rt_i}{\rt_{i-1}} \mt_{i}}{\rt_{i}} + \frac{\mt_i}{\rt_{i-1}} R_{i-1} + \left( -\frac{\tilde{a}_i}{\rt_{i}} - \frac{\sqrt{\frac{i-1}{N}} \tilde{c}_{i-1}}{ \rt_{i-1} \rt_{i}} \right)\\
    +&\left[ -\frac{\mt_i}{\rt_{i-1}} \frac{R_{i-1}^2}{1 + R_{i-1}} +\frac{\sqrt{\frac{i-1}{N}} \tilde{c}_{i-1}}{\rt_{i-1}\rt_{i}} \frac{R_{i-1}}{1 + R_{i-1}} \right] .
\end{align*}
We can simplify the above expression as follows
\begin{align}\label{R recurrence}
    R_{i} &= \deltat_i + \gt_{i} R_{i-1} + \xit_{i} + \tilde{R}_{i-1}^{(1)},
\end{align}
where we introduce the notation
\begin{align*}
    \tilde{\gamma_i} & = \frac{\mt_i}{\rt_{i-1}}, \qquad
    \tilde{\delta}_i = \frac{\lambda_1 - 2}{\rt_i} - \frac{\rt_i - \rt_{i-1}}{\rt_i \rt_{i-1}} \mt_{i}, \qquad
    \xit_{i} = - \frac{\tilde{a}_i}{ \rt_{i}}  - \sqrt{\frac{i-1}{N}} \frac{\tilde{c}_{i-1}}{\rt_{i-1} \rt_{i}}, \\
    \tilde{R}_{i-1}^{(1)} &= -\frac{\mt_i}{\rt_{i-1}} \frac{R_{i-1}^2}{1 + R_{i-1}} + \sqrt{\frac{i-1}{N}} \frac{ \tilde{c}_{i-1}}{\rt_{i-1}\rt_{i}} \frac{R_{i-1}}{1 + R_{i-1}} \, .
\end{align*}
We iteratively unpack the recurrence equation \eqref{R recurrence} to get
\begin{equation}\label{ev_R_i_decomposition}
\begin{aligned}
    R_{i} = & \deltat_{i} + \gt_{i} R_{i-1} + \xit_{i} + \Rtone_{i-1} \\
    = & \deltat_{i} + \gt_{i} \deltat_{i-1}  + \xit_{i} + \gt_{i} \xit_{i-1} + \Rtone_{i-1} + \gt_{i} \Rtone_{i-2} \\
    = & \dots \\
    = & \tilde{\delta}_{i} + \gt_{i} \tilde{\delta}_{i-1} + \dots + \gt_{i} \dots \gt_{3} \deltat_{2} \\
    &+ \xit_{i} + \gt_{i} \xit_{i-1} + \dots + \gt_{i} \dots \gt_{3} \xit_{2} \\
    & + \Rtone_{i-1} + \gt_{i} \Rtone_{i-2} + \dots + \gt_{i} \dots \gt_{3} \Rtone_{1} \\
    & + \gt_{i} \dots \gt_{2} R_{1} \, .
\end{aligned}
\end{equation}
Since \( \gt_{i} < 1 \) is an increasing sequence, we have
\[
    1 + \gt_{i} + \dots + \gt_{i} \dots \gt_{3} \leq \frac{1}{1 - \gt_{i}} = \frac{\rt_{i-1}}{\rt_{i-1} - \mt_{i}} < \frac{2}{1 - \rt_{i}} \, .
\]
Similarly to \textcolor{black}{lemma~{10} in \cite{johnstone2020logarithmic}}, 
we see that $\tilde{\xi}_{i} + \gt_{i} \xit_{i-1} + \dots + \gt_{i} \dots \gt_{3} \xit_{2}$ are sub-gamma random variables, which implies that for some constant \( K_1 > 0 \), with probability at least \( 1 - 1/N\), uniformly over \( i = 2, \dots, N\),
\begin{equation}\label{ev_event_1}
    |\xit_{i} + \gt_{i} \xit_{i-1} + \dots + \gt_{i} \dots \gt_{3} \xit_{2}| \leq K_1 \left( \sqrt{\frac{\log N}{N(\tilde{r}_i -1)}} + \frac{\log N}{N} \right) \, .
\end{equation}
We also have
\begin{align*}
    |\deltat_{i}| &\leq |\lambda_1 - 2| + \left(\sqrt{1 - \frac{i-2}{N}} - \sqrt{1 - \frac{i-1}{N}}\right)\left(1 - \sqrt{1 - \frac{i-1}{N}}\right) \\
    &\leq |\lambda_1 - 2| + \frac{1}{N} \frac{ \frac{i-1}{N}}{\sqrt{1 - \frac{i-1}{N}}} \, .
\end{align*}
The function \( \frac{x}{\sqrt{1- x}} \) has the derivative \( \frac{2 - x}{2(1 - x)^{3/2}}\) and therefore is increasing on \( (0,1) \). Thus, we have for all \( i \leq N - N^{1/3} \),
\begin{equation}\label{ev_delta_bound}
    |\deltat_{i}| \leq |2 - \lambda_1| + N^{-2/3} \, .
\end{equation}
In addition, since $\tilde{c}_i$ is a centered and scaled $\chi^2$ random variable, we have for some constant \( K_2 > 0\) with probability at least \(1 - 1/N\)
\begin{equation}\label{ev_event_2}
    \max_{i} |\tilde{c}_{i}| \leq K_2 \frac{\log N}{\fix{\sqrt{N}}} \,.
\end{equation}
Further, \( R_1 = \frac{\lambda_1 - 2}{2} - \frac{\tilde{a}_1}{2} \), hence for some $K_3>0$ with probability at least \(1 - 1/N\), we have for all \(i \),
\begin{equation}\label{ev_event_3}
    |\gamma_{i} \dots \gamma_{2} R_{1}| \leq K_3 \frac{\log N}{\sqrt{N}}\, .
\end{equation}
Finally, by the Tracy-Widom law (e.g., theorem~{4.5.42} in \cite{anderson2010introduction}) we have that for any \( \epsilon > 0 \) there is \( C_{\epsilon}\) such that for large enough \( N \),
\[
    \Pr( |\lambda_1 - 2| > C_{\epsilon} N^{-2/3}) \leq \epsilon \, .
\]

Now, consider the event 
\[
    \mathcal{E}(\epsilon) = \left\{ |\lambda_1 - 2 | \leq C_{\epsilon / 2}N^{-2/3} \text{ and \eqref{ev_event_1}, \eqref{ev_event_2},  \eqref{ev_event_3} hold} \right\},
\]
so that for large enough \(N\), \( \Pr(\mathcal{E}(\epsilon)) \geq 1 - \epsilon \). We will show by induction that on this event, for all \( i \leq N - N^{1/3} \log^{3} N\),
\[
    |R_{i}| \leq \frac{N^{-1/3}}{\log^{1/5} N} \, .
\]

The base holds due to \eqref{ev_event_3}. Suppose that \( \max_{j \leq i-1} |R_{i}| \leq \frac{N^{-1/3}}{\log^{1/5} N} \), which is at most \( 1/ 2\) for large enough \(N\). Then, by \eqref{ev_event_2} we have for \( j \leq i-1\)
\begin{align*}
    |\Rtone_{j}| &\leq 2 \frac{N^{-2/3}}{\log^{2/5} N} + \frac{2 K_2 \log N}{\sqrt{N}} \times \frac{N^{-1/3}}{\log^{1/5} N} = \frac{N^{-2/3}}{\log^{2/5} N} \left( 2 + 2 K_2 \frac{\log^{6/5} N}{N^{1/6}} \right) \\
    &\leq 3 \frac{N^{-2/3}}{\log^{2/5} N},
\end{align*}
for large enough \(N\). Further, for all \( i \leq N - N^{1/3} \log^{3} N\), we have
\[
    \frac{1}{\rt_{i}-1} \leq \frac{N^{1/3}}{\log^{3/2} N} \, .
\]
Therefore,
\begin{align*}
    |\Rtone_{i-1} + \gt_{i} \Rtone_{i-2} + \dots + \gt_{i} \dots \gt_{3} \Rtone_{1}| &\leq \frac{2}{\rt_i-1} 
    \times 3 N^{-2/3} \log^{-2/5} N \\
    &\leq \frac{6}{\log^{3/2} N} \frac{N^{-1/3}}{\log^{1/5} N} \, . 
\end{align*}
By \eqref{ev_delta_bound} and \( |\lambda_1 - 2| \leq  N^{-2/3} \log^{2/3} N\) we have for all \( i = 3, \dots, N\),
\begin{align*}
    \tilde{\delta}_{i} + \gt_{i} \tilde{\delta}_{i-1} + \dots + \gt_{i} \dots \gt_{3} \deltat_{2} \leq \frac{4}{\rt_i-1} N^{-2/3} \log^{2/3} N &\leq \frac{4}{\log^{19/30} N} \frac{N^{-1/3}}{\log^{1/5} N} \, .
\end{align*}
Finally, from \eqref{ev_event_1} we get for all \( i = 1, \dots, N - N^{1/3} \log^3 N\),
\[
    |\xit_{i} + \gt_{i} \xit_{i-1} + \dots + \gt_{i} \dots \gt_{3} \xit_{2}| \leq K_1 \left( \sqrt{\frac{\log N}{N(\rt_i-1)}} + \frac{\log N}{N} \right) \leq 2K_1 \frac{N^{-1/3}}{\log^{1/4} N} \, .
\]

From decomposition \eqref{ev_R_i_decomposition}, we therefore obtain for large enough \(N\),
\begin{align*}
    |R_i| &\leq \frac{N^{-1/3}}{\log^{1/5} N} \left( \frac{4}{\log^{19/30} N} + \frac{2K_1}{\log^{1/20} N} + \frac{6}{\log^{3/2} N} + K_{3} \frac{\log^{6/5} N}{N^{1/6}}   \right) \\
    &\leq \frac{N^{-1/3}}{\log^{1/5} N} \, ,
\end{align*}
which proves the induction step, and the lemma follows. \qed

\subsection{Proof of \cref{prop:sticky}}\label{proof:sticky}   

The proof is a direct consequence of the following \emph{isotropic local law} and \emph{isotropic delocalization} results due to Knowles and Yin \cite{Knowles2013b}:

\begin{proposition}
\label{thm: isotropic}
Let $W_N'$ be a Wigner matrix satisfying \textcolor{black}{assumption W. Let $R(z)=(W_N'-zI)^{-1}$ and, for any $\tau>0$, let
\[
\mathbf{S}(\tau)=\left\{z:=E+\im \eta:|E|<\tau^{-1},N^{-1+\tau}\leq \eta\leq \tau^{-1}\right\}.
\]}
We have:
\newline
(i) (isotropic local law) Fix $\tau>0$. Then for each $\epsilon > 0$, we have
    w.o.p.
\begin{equation}
    \label{isotropic law}
    \mathbf{v}^\ast R(z)\mathbf{w}=m_{SC}(z)\mathbf{v}^\ast\mathbf{w}+O (N^{\varepsilon}\Psi(z)),
    \qquad
    \Psi(z) := \sqrt{\frac{\Im m_{SC}(z)}{N \eta }} + \frac{1}{N\eta}
\end{equation}
uniformly for $z \in \mathbf{S}(\tau)$ and
for any two deterministic vectors $\mathbf{v,w}$ of unit Euclidean
length in $\mathbb{C}^N$.
\newline
(ii) (isotropic delocalization) Let $\mathbf{u}^{(j)}$ be the $j$-th principal normalized eigenvector of $W_N'$. Then, for each $\varepsilon>0$, we have w.o.p.
\begin{equation}
    \label{isotropic delocalization}
    \max_{j}|\mathbf{v}^\ast \mathbf{u}^{(j)}|^2=O(N^{\varepsilon-1})
\end{equation}
uniformly for normalized deterministic vectors $\mathbf{v}\in\mathbb{C}^N$.
\end{proposition}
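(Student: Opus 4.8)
The plan is to deduce \cref{thm: isotropic} from the \emph{entrywise} local law — which is available for Wigner matrices under \cref{Wigner definition} (cf.\ the local laws of \cite{BenaychG2018})  — by running the standard isotropic upgrade of Knowles and Yin \cite{Knowles2013b} and checking that every step survives under our hypotheses. The two parts are handled separately, with part (i) the substantive one and part (ii) a short consequence of it.

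\textbf{Part (i): isotropic local law.} I would start from the entrywise local law: w.o.p., uniformly for $z = E + \im \eta \in \mathbf{S}(\tau)$,
\[
  \max_{i,j} |R_{ij}(z) - m_{SC}(z)\delta_{ij}| \le N^{\varepsilon} \Psi(z),
  \qquad
  \Bigl| \frac{1}{N}\tr R(z) - m_{SC}(z)\Bigr| \le \frac{N^{\varepsilon}}{N\eta}.
\]
By polarization it then suffices to control $\langle \mathbf{v}, (R(z) - m_{SC}(z))\mathbf{v}\rangle$ for an arbitrary deterministic unit vector $\mathbf{v}$. Following \cite[Sec.~5]{Knowles2013b}, I would estimate, for each fixed $p$, the moment $\mathbf{E}\,|\langle \mathbf{v}, (R - m_{SC})\mathbf{v}\rangle|^{2p}$ by expanding each resolvent entry through the Schur-complement (minor) identity and bounding the resulting sums using the entrywise bounds above together with the uniform moment bounds of \cref{Wigner definition}(iii); the self-improving structure of the expansion closes the estimate and yields $|\langle \mathbf{v}, (R - m_{SC})\mathbf{v}\rangle| \le N^{\varepsilon}\Psi(z)$ with probability $\ge 1 - N^{-c(p)}$ for each fixed $z$. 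A union bound over an $N^{-10}$-net of $\mathbf{S}(\tau)$, together with the Lipschitz estimate $\|R'(z)\| \le \eta^{-2}$, upgrades this to the stated uniformity in $z$, and letting $p\to\infty$ gives the w.o.p.\ conclusion. The only hypotheses used are independence of the entries, mean zero, off-diagonal variance $1/N$, and the moment bounds; in particular the diagonal entries enter the Schur complements only at subleading order, so the argument is insensitive to the diagonal variance profile, which is exactly what permits the possibly-degenerate diagonal of \cref{Wigner definition}(ii). The extra independence of $\Re \xi_{ij}$ and $\Im \xi_{ij}$ only simplifies the cumulant bookkeeping.

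\textbf{Part (ii): isotropic delocalization.} By the spectral decomposition $R(z) = \sum_j (\mu_j - z)^{-1} \mathbf{u}^{(j)}(\mathbf{u}^{(j)})^{\ast}$, for any fixed index $j_0$ and $\eta>0$,
\[
  \Im\langle \mathbf{v}, R(\mu_{j_0} + \im\eta)\mathbf{v}\rangle
  = \sum_j \frac{\eta\,|\langle \mathbf{v}, \mathbf{u}^{(j)}\rangle|^2}{(\mu_j - \mu_{j_0})^2 + \eta^2}
  \ge \frac{|\langle \mathbf{v}, \mathbf{u}^{(j_0)}\rangle|^2}{\eta}.
\]
Since $\|W_N'\| = O(1)$ w.o.p.\ (from the moment bounds in \cref{Wigner definition}), all $\mu_j$ lie in $[-\tau^{-1},\tau^{-1}]$ for $\tau$ small, so for $\eta = N^{-1+\varepsilon}$ the point $z = \mu_{j_0} + \im\eta$ lies in $\mathbf{S}(\tau)$ once $\tau \le \varepsilon$. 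Applying part (i) at this $z$, and using $\Im m_{SC}(z) = O(1)$ together with $\Psi(z) = O(N^{-\varepsilon/2})$ when $N\eta = N^{\varepsilon}$, one obtains $|\langle \mathbf{v}, \mathbf{u}^{(j_0)}\rangle|^2 \le \eta\bigl(\Im m_{SC}(z) + N^{\varepsilon}\Psi(z)\bigr) = O(N^{-1+2\varepsilon})$ w.o.p.; a union bound over $j_0 = 1,\dots,N$ (and relabelling $\varepsilon$) gives the uniform bound over all eigenvectors, and a further union bound over a finite net of unit vectors $\mathbf{v}$ plus continuity yields uniformity in $\mathbf{v}$.

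\textbf{Main obstacle.} The real content is part (i): verifying line by line that the Knowles--Yin isotropic upgrade goes through under \cref{Wigner definition}. This is largely bookkeeping — tracking that each term in the moment expansion is bounded using only mean zero, off-diagonal variance $1/N$, and finitely many moments of each fixed order — but the point that must be made explicitly is the insensitivity to the diagonal variance, since \cref{Wigner definition}(ii) permits $\mathbf{E}\xi_{ii}^2$ anywhere in $[0,B]$ whereas \cite{Knowles2013b} assumes a nondegenerate diagonal. Alternatively one may simply invoke \cite{Knowles2013b} as a black box after recording this observation, since the entrywise local law on which it rests is itself known to hold under \cref{Wigner definition}.
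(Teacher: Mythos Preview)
Your approach is essentially the same as the paper's: the paper does not give an independent proof here but notes that Knowles and Yin's argument assumes matching diagonal second moments and a slightly different Wigner definition, and then refers to Section~B.3 of \cite{johnstone2020logarithmic} for the line-by-line verification that their isotropic upgrade goes through under \cref{Wigner definition} (in particular with the possibly degenerate diagonal). You have correctly identified this diagonal-variance insensitivity as the only substantive point to check.

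One small correction to your part~(ii): the final ``union bound over a finite net of unit vectors $\mathbf{v}$'' is both unnecessary and infeasible (an $\epsilon$-net of the unit sphere in $\mathbb{C}^N$ has exponentially many points). The statement is for each \emph{fixed} deterministic $\mathbf{v}$, with implicit constants independent of $\mathbf{v}$; this uniformity is delivered directly by the high-moment bounds in the Knowles--Yin argument, not by a net. Also note that $z=\mu_{j_0}+\im\eta$ is random, so you are implicitly using the uniformity in $z\in\mathbf{S}(\tau)$ from part~(i), which is fine.
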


\textcolor{black}{Knowles and Yin's isotropic local law and isotropic delocalization results require matching second moments on the diagonal of $W_N'$. They also use a slightly different definition of Wigner matrices from ours. A modification of their proof, accommodating our setting, is given in section~B.3 of \cite{johnstone2020logarithmic}.}

\textcolor{black}{As explained in \cite{Knowles2013b}, the isotropic local law can be strengthened ``outside the spectrum'' as follows (a proof is almost identical to the proof of theorem 2.3 of \cite{Knowles2013b}, so we omit it):
\begin{proposition}
\label{isotropic law outside}
Fix $\Sigma\geq 3$ and let $z=E+\im \eta$. Then for any $\epsilon>0$, any
\[
E\in[-\Sigma,-2-N^{-2/3+\epsilon}]\cup [2+N^{-2/3+\epsilon},\Sigma],
\]
any $\eta\in(0,\Sigma]$, and any deterministic vectors $\mathbf{v,w}$ of unit Euclidean
length in $\mathbb{C}^N$ we have w.o.p.
\begin{equation}
    \label{eq:outside}
    \mathbf{v}^\ast R(z)\mathbf{w}=m_{SC}(z)\mathbf{v}^\ast\mathbf{w}+O \left(N^{\varepsilon}\sqrt{\frac{\Im m_{SC}(z)}{N \eta }}\right).
\end{equation}
\end{proposition}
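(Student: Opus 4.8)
The plan is to follow the proof of Theorem~2.3 in \cite{Knowles2013b} essentially line by line, incorporating only the modifications needed to accommodate Assumption~W; these are of the same nature as those used in section~B.3 of \cite{johnstone2020logarithmic} to pass from the Wigner class of \cite{Knowles2013b} to ours, and I would import them wholesale. Relative to the in-spectrum isotropic law already recorded as Proposition~\ref{thm: isotropic}(i), the only new content is that the control parameter improves from $N^{\epsilon}\Psi(z)=N^{\epsilon}\bigl(\sqrt{\Im m_{SC}(z)/(N\eta)}+1/(N\eta)\bigr)$ to $N^{\epsilon}\sqrt{\Im m_{SC}(z)/(N\eta)}$ once $z$ lies at distance at least $N^{-2/3+\epsilon}$ from $[-2,2]$, and that $\eta$ may be pushed down to $0$.

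I would first assemble three ingredients. \emph{(a) Edge rigidity.} Under Assumption~W, for any fixed $\epsilon'>0$, w.o.p.\ $\lambda_1(W_N')\le 2+N^{-2/3+\epsilon'}$ and $\lambda_N(W_N')\ge -2-N^{-2/3+\epsilon'}$; this is Theorem~2.9 of \cite{BenaychG2018}, already used in this paper. Hence, for $E$ in the stated range and \emph{any} $\eta\ge 0$, we have $\operatorname{dist}(z,\operatorname{spec}(W_N'))\ge \tfrac12 N^{-2/3+\epsilon}$ w.o.p., so $R(z)$ is analytic on the whole segment $\{E+\im\eta:0\le\eta\le\Sigma\}$ and $\|R(z)\|\le 2N^{2/3}$ there. \emph{(b) Stability of the self-consistent equation.} Writing $\kappa=\operatorname{dist}(E,[-2,2])$, one has $|m_{SC}(z)|\asymp 1$, $\Im m_{SC}(z)\asymp \eta/\sqrt{\kappa+\eta}$, and, crucially, $|1-m_{SC}(z)^2|\asymp \sqrt{\kappa+\eta}\gtrsim N^{-1/3+\epsilon/2}$: this lower bound on the ``stability gap'' is what makes the improvement possible. \emph{(c) The interior law.} Proposition~\ref{thm: isotropic}(i) together with the accompanying entrywise local law, valid for $\eta\ge N^{-1+\tau}$ and all $|E|\le\Sigma$ (take $\tau$ so small that $\tau^{-1}\ge\Sigma$).

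The argument then splits by scale. There is a threshold $\eta_\ast=\eta_\ast(E)\asymp \kappa^{1/4}N^{-1/2}+N^{-1+\tau}$ above which $1/(N\eta)\lesssim\sqrt{\Im m_{SC}(z)/(N\eta)}$, so for $\eta\ge\eta_\ast$ the claim is immediate from Proposition~\ref{thm: isotropic}(i). For $0\le\eta<\eta_\ast$ one runs the self-improving bootstrap of \cite{Knowles2013b}. Fix $E$ and set $X(\eta)=\sup_{\|\mathbf{v}\|=\|\mathbf{w}\|=1}|\mathbf{v}^{\ast}R(E+\im\eta)\mathbf{w}-m_{SC}(E+\im\eta)\mathbf{v}^{\ast}\mathbf{w}|$ (one actually works with a high moment of this to keep exceptional events manageable). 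Resolvent identities, Ward identities, and the fluctuation-averaging estimates, together with ingredient~(b), yield a self-consistent inequality schematically of the form
\begin{equation*}
  X \le N^{\epsilon}\sqrt{\frac{\Im m_{SC}(z)}{N\eta}} + \frac{N^{\epsilon}X^{2}}{|1-m_{SC}(z)^{2}|},
\end{equation*}
valid w.o.p.\ and---thanks to ingredient~(a)---continuously in $\eta$ on all of $[0,\Sigma]$. Since we already know, from the case $\eta\ge\eta_\ast$ treated above, that at $\eta=\eta_\ast$ the quantity $X$ sits on the small branch $X\lesssim N^{\epsilon}\sqrt{\Im m_{SC}(z)/(N\eta)}$, and since $X(\cdot)$ and $m_{SC}(\cdot)$ vary by at most a fixed power of $N$ over $\eta$-intervals of length $N^{-10}$, the standard ``gap in admissible values'' continuity argument propagates the small branch from $\eta_\ast$ down to $\eta=0$. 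Finally a deterministic $N^{-10}$-net in $(E,\eta)$ combined with $\|R'(z)\|\le\|R(z)\|^{2}\le 4N^{4/3}$ upgrades the pointwise-in-$z$ bound to the uniform one.

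The main obstacle is precisely this self-improving bootstrap in the range $\eta<\eta_\ast$ (in particular $N^{-1+\tau}\le\eta\le\eta_\ast$, where the interior law's $1/(N\eta)$ error is genuinely not negligible) and its continuous extension to $\eta=0$: both steps rest on exploiting the stability gap $|1-m_{SC}(z)^2|\gtrsim N^{-1/3+\epsilon/2}$ and on the w.o.p.\ separation of $z$ from the spectrum furnished by edge rigidity. A secondary, purely bookkeeping obstacle is to verify that the fluctuation-averaging inputs of \cite{Knowles2013b} and \cite{johnstone2020logarithmic} survive the weaker diagonal condition of Assumption~W (diagonal second moments merely bounded rather than matched); as in section~B.3 of \cite{johnstone2020logarithmic}, the diagonal contributes only lower-order terms, so this is routine.
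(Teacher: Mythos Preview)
Your proposal is correct and matches the paper's approach exactly: the paper omits the proof, stating only that it ``is almost identical to the proof of theorem 2.3 of \cite{Knowles2013b},'' which is precisely the route you outline, including the modifications from section~B.3 of \cite{johnstone2020logarithmic} to handle the diagonal under Assumption~W. Your sketch in fact supplies considerably more detail than the paper does.
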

}

To prove \cref{prop:sticky}, we copy part of the argument of
  \cite[][theorem 6.3]{Knowles2013b}, with tweaks in order to replace
  $\zeta$-high 
  probability statements by stochastic domination, and to extend the
  method to the top $k$ eigenvalues $\{ \lambda_j \}_1^k$ in place of
  $\lambda_1$.
Recall that $ \lambda $ is the eigenvalue of $W_{J,N}' $ iff $ \det(W_{J,N}' - \lambda) = 0 $. For $ \lambda \notin \{ \mu_{1}', \dots, \mu_{N}'\} $,
\[
    \det(W_{J,N}' - \lambda) = \det(W_{N}' - \lambda + J \bv \bv^{*}) = \det(W_{N}' - \lambda) (1 + J \bv^* (W_{N}' - \lambda)^{-1} \bv) \, .
\]
Hence $ \lambda \in \{ \lambda_1, \dots, \lambda_N\} $ is equivalent to $ \bv^{*}
R(\lambda) \bv = -{1}/{J} $. 

Fix $\epsilon > 0$ and let $x = 2 + N^{-2/3 + \epsilon}$. \textcolor{black}{By \cite[][lemma 3.2]{Knowles2013b}, 
\begin{equation*}
\frac{\Im m_{SC}(x+\im \eta)}{\eta}\asymp (N^{-2/3+\epsilon}+\eta)^{-1/2}.
\end{equation*}
Therefore,} from the isotropic law \textcolor{black}{outside the spectrum}
\eqref{eq:outside}, we have w.o.p.~that
\begin{equation}
  \label{eq:gvxx}
  \bv^{*} R(x) \bv = m_{SC}(x) + O(N^{-1/3 +3 \epsilon/4}),
\end{equation}
while from semicircle law estimates \cite[][(3.3)]{Knowles2013b} we find
\begin{equation}
  \label{eq:1mx}
  1 + m_{SC}(x) \asymp N^{-1/3 + \epsilon/2},
\end{equation}
which together yield $1 + \bv^{*}R(x)\bv \geq 1 - 1/J$ {for sufficiently large $N$}.
Since $y \to \bv^{*} R(y) \bv$ is increasing in $(\mu_1',\infty)$ and
$\mu_1' \leq x$ w.o.p., it follows that $\mu_1' \leq \lambda_1 \leq
x$.

Suppose that $ j \leq N^{\epsilon} $ and
 let $q = 2N^\epsilon$, and let us
split the projected resolvent into `edge' and `bulk'
components:
\begin{equation*}
  \bv^{*} R(\lambda) \bv = \bigg( \sum_{\alpha \leq q} + \sum_{\alpha > q}
  \bigg) \frac{|v_\alpha|^2}{\mu_\alpha' - \lambda}
  = R^{\rm e}_{\bv \bv}(\lambda) + R^{\rm b}_{\bv \bv}(\lambda).
\end{equation*}
We first show that the bulk part satisfies $ R^{\mathrm{b}}_{\bv \bv}(\lambda_j) \approx -1$. To do so, we compare it to $\bv^{*} R(x) \bv \approx
-1$ and show that the bulk components
$R^{\rm b}_{\bv \bv}(\lambda_j)$ and $R^{\rm b}_{\bv \bv}(x)$ are close.
Indeed, w.o.p.
\begin{align}
 |R^{\rm b}_{\bv \bv}(\lambda_j) - R^{\rm b}_{\bv \bv}(x)|
   & \leq C N^{-2/3 + \epsilon} \sum_{\alpha > q}
     \frac{|v_\alpha|^2}{(\mu_\alpha' - \lambda_j)^2} \notag \\
   & \leq C N^{-2/3 + 2\epsilon} \bigg[ \sum_{k \geq 1}
     \frac{2^k N^{-1}}{(2^{2k/3} N^{-2/3})^2} + {1} \bigg] \notag
  \\
   & \leq CN^{-1/3 + 2\epsilon}.  \label{eq:bulk-diff}
\end{align}
In the first inequality, we used $ \lambda_j - \mu_{\alpha}' \leq x - \mu_{\alpha}' $ and $|\lambda_j - x| \leq 
N^{-2/3 + \epsilon}$ w.o.p., \textcolor{black}{where the latter fact follows from the eigenvalue rigidity (theorem 2.9 of \cite{BenaychG2018}) and the interlacing theorem.} In the second we 
estimated the contribution of the eigenvalues $\alpha \leq N/2$ using the dyadic
decomposition into the sets
\begin{equation*}
   U_k:= \{ \alpha \in [q, N/2] ~:~ 2^k \leq \alpha \leq 2^{k+1} \},
 \end{equation*}
combined with eigenvalue rigidity 
 and the delocalization estimate \eqref{isotropic delocalization}. 

From eigenvalue rigidity and \textcolor{black}{the estimate of the typical eigenvalue location
\[
 2 - \gamma_\alpha \asymp (\alpha/N)^{2/3} \qquad \text{for } \qquad
 \alpha \leq N/2, 
\]}
we have for $\alpha
\leq q$ that $x - \mu_\alpha' \asymp N^{-2/3}(N^\epsilon +
\alpha^{2/3} + \alpha^{-1/3}) \asymp N^{-2/3 + \epsilon}$, and so
using also delocalization, we have w.o.p.
\begin{equation*}
  |R_{\bv \bv}^{\rm e}(x)|
  \leq \sum_{\alpha \leq q} \frac{|v_\alpha|^2}{|x-\mu_\alpha'|}
  \lesssim  \frac{N^\epsilon}{N}\frac{q}{N^{-2/3 + \epsilon}}
   =2 N^{-1/3 + \epsilon}.
\end{equation*}
Combining \eqref{eq:bulk-diff} with the previous display and then with
\eqref{eq:gvxx} and \eqref{eq:1mx}, we get
\begin{equation*}
  R_{\bv \bv}^{\rm b}(\lambda_j)
  = R_{\bv \bv}^{\rm b}(x) + O(N^{-1/3 + 2 \epsilon})
  = \bv^{*}R(x)\bv + O(N^{-1/3 + 2 \epsilon})
  = -1 + O(N^{-1/3 + 2 \epsilon}) .
\end{equation*}
Consequently,
\begin{equation*}
  R_{\bv \bv}^{\rm e}(\lambda_j)
  = - 1/J - R_{\bv \bv}^{\rm b}(\lambda_j)
  = 1 - 1/J + O(N^{-1/3 + 2\epsilon}).
\end{equation*}
Since $\mu_j' \leq \lambda_j < \mu_{j-1}'$ (with $\lambda_0' = +\infty)$,
\begin{equation*}
  \frac{1}{\lambda_j - \mu_j'} \sum_{\alpha = j}^q |v_\alpha|^2
  \geq \sum_{\alpha = j}^q \frac{|v_\alpha|^2}{\lambda_j-\mu_\alpha'}
  \geq - R_{\bv \bv}^{\rm e}(\lambda_j)
  = \frac{1}{J} - 1 + O(N^{-1/3+2 \epsilon}).
\end{equation*}
Since delocalization implies $\sum_j^q |v_\alpha|^2 \leq q
N^{-1+\epsilon} \leq 2N^{-1+2 \epsilon}$, we find that w.o.p.
\begin{equation*}
  \lambda_j - \mu_j' \lesssim \frac{J}{\textcolor{black}{1-J}} N^{-1+2\epsilon},
\end{equation*}
from which the result follows.
\qed

\color{black}

\subsection{Proof of equation  \eqref{eq:ey17.3}}\label{sec:proof of eq 6.6}
Equation \eqref{eq:ey17.3} follows from lemma 17.3 of \cite{ey17} under a somewhat different definition of Wigner matrices used in that book. Here we prove \eqref{eq:ey17.3}, along the lines of the proof of lemma 17.3 (that, in turn, is based on the proof of lemma 6.1 in \cite{ErdosYY2012}), accommodating our definition \ref{Wigner definition}.

Recall that $E_\infty = 2 + 2N^{-2/3 + \epsilon}$. Define $\chi_E(x):=\mathbf{1}_{[E,E_\infty]}(x)$ and note that $\mathcal{N}_W(E,E_\infty)=\tr\chi_E(W)$, where the left hand side denotes the number of eigenvalues of $W$ in $[E,E_\infty]$.
Now, our definition of Wigner matrices coincides with that of \cite{BenaychG2018}. By theorem 2.9 (rigidity of eigenvalues) of that paper, we have w.o.p.
\begin{equation}
    \label{eq:compact vs infty}
    \mathcal{N}_W(E,\infty)=\mathcal{N}_W(E,E_\infty)=\tr\chi_E(W)
\end{equation}
for a Wigner matrix $W$.

Next, let us approximate $\tr\chi_E(W)$ by its smoothed version $\tr[\chi_E\star\theta_\eta](W)$, where $\theta_\eta(x)=\frac{1}{\pi}\frac{\eta}{x^2+\eta^2}$. Notice
\begin{equation}
    \label{eq:smoothed}
\tr[\chi_E\star\theta_\eta](W)=\frac{N}{\pi}\int_E^{E_\infty} \Im s_W(y+\im \eta)\diff y =g(W,E).
\end{equation}
Let $d=d(x):=|x-E|+\eta$,  $d_\infty=d_\infty(x):=|x-E_\infty|+\eta$, 
$\ell_1:=N^{-2/3-3\epsilon}$, and $\eta=N^{-2/3-9\epsilon}$.
Then elementary calculations yield (see discussion in \cite{ErdosYY2012} between equations (6.9) and (6.11)), for any $E$ such that $|E-2|\leq \frac{3}{2}N^{-2/3+\epsilon}$,
\begin{equation}
\label{eq:ey12 6.11}
|\tr\chi_E(W)-\tr[\chi_E\star\theta_\eta](W)|\leq C\left(\tr f(W)+ \frac{\eta}{\ell_1}\mathcal{N}(E,E_\infty)+\mathcal{N}(E-\ell_1,E+\ell_1)+\mathcal{N}(E_\infty-\ell_1,\infty)\right)
\end{equation}
for some constant $C>0$, where
\[
f(x):=\frac{\eta(E_\infty-E)}{d_\infty(x)d(x)}\mathbf{1}\{x\leq E-\ell_1\}.
\]

Now note that if $y$ is such that
\[
N\int_y^2 p_{SC}(x)\diff x=N^{2\epsilon},
\]
then $2-y\gtrsim N^{-2/3+4\epsilon/3}>2-E$. Therefore, the eigenvalue rigidity (theorem 2.9 of \cite{BenaychG2018}) yields 
\[
\mathcal{N}(E,E_\infty)\leq N^{2\epsilon}\text{ w.o.p.}
\]
The same rigidity result implies that 
\[
\mathcal{N}(E_\infty-\ell_1,\infty)=0\text{ w.o.p.}
\]
Using the latter two displays in \eqref{eq:ey12 6.11}, we obtain w.o.p.
\begin{equation}
\label{eq:ey12 6.13}
|\tr\chi_E(W)-\tr[\chi_E\star\theta_\eta](W)|\leq C\left(\tr f(W)+\mathcal{N}(E-\ell_1,E+\ell_1)+N^{-4\epsilon}\right),
\end{equation}
when $W$ is a Wigner matrix.

Next, the arguments of \cite{ErdosYY2012} which lead them to their equation (6.17) yield in our case
\[
\tr f(W)\leq CN\eta(E_\infty-E)\int\frac{1}{y^2+\ell_1^2}\Im s_W(E-y+\im \ell_1)\diff y.
\]
On the other hand, by the local law for Wigner matrices (theorem 2.6 of \cite{BenaychG2018}),
\[
\Im s_W(E-y+\im \ell_1)\leq \Im m_{SC}(E-y+\im \ell_1)+\frac{N^\epsilon/2}{N\ell_1},
\]
w.o.p. uniformly for $|E-y|$ bounded by any large constant. Since w.o.p.
\[
\sup_{|y|>10}|s_W(E-y+\im \ell_1)|=O(1)\qquad\text{and}\qquad \sup_{|y|>10}|m_{SC}(E-y+\im \ell_1)|=O(1),
\]
while $N\eta(E_\infty-E)<N^{-1/3}$ for sufficiently large $N$, we have
\begin{equation}
\label{eq:ey12 6.17}
\tr f(W)\leq CN\eta(E_\infty-E)\int\frac{1}{y^2+\ell_1^2}\left[\Im m_{SC}(E-y+\im \ell_1)+\frac{N^{\epsilon/2}}{N\ell_1}\right]\diff y+CN^{-1/3},
\end{equation}
w.o.p.

Clearly,
\[
\int\frac{1}{y^2+\ell_1^2}\frac{1}{\ell_1}\diff y\leq \frac{C}{\ell_1^2}.
\]
Using this in \eqref{eq:ey12 6.17}, we obtain
\[
\tr f(W)\leq CN\eta(E_\infty-E)\int\frac{1}{y^2+\ell_1^2}\Im m_{SC}(E-y+\im \ell_1)\diff y+CN^{-3\epsilon/2}.
\]
The arguments of the proof of lemma 6.1 in \cite{ErdosYY2012} imply that the first term on the right hand side of the latter inequality is bounded by $CN^{-2\epsilon}$. Hence overall,
\[
\tr f(W)\leq C N^{-3\epsilon/2}
\]
w.o.p.  Recalling \eqref{eq:ey12 6.13}, we obtain, w.o.p.
\[
|\tr\chi_E(W)-\tr[\chi_E\star\theta_\eta](W)|\leq C\left(N^{-3\epsilon/2}+\mathcal{N}(E-\ell_1,E+\ell_1)\right).
\]

Next, let $\ell=\frac{1}{2}N^{-2/3-\epsilon}$. Similarly to the proof of corollary 6.2 in \cite{ErdosYY2012}, we obtain, for any $E$ such that $|E-2|\leq N^{-2/3+\epsilon}$ w.o.p.
\[
\tr \chi_E(W)\leq \tr\chi_{E-\ell}\star\theta_\eta(W)+CN^{-3\epsilon/2}+C\frac{\ell_1}{\ell}\mathcal{N}(E-2\ell,E+\ell).
\]
From the semicircle law on small scales (theorem 2.8 in \cite{BenaychG2018}), we obtain, w.o.p.
\[
\mathcal{N}(E-2\ell,E+\ell)\leq N\int_{E-2\ell}^{E+\ell}p_{SC}(x)\diff x+N^{\epsilon/2}.
\]
Directly evaluating the integral (recalling that $E$ is in the vicinity of $2$), we obtain
\[
\int_{E-2\ell}^{E+\ell}p_{SC}(x)\diff x\leq C \ell \sqrt{N^{-2/3+\epsilon}}\leq C N^{-1-\epsilon/2}.
\]
This taken with $\ell_1/\ell=2N^{-2\epsilon}$ yield
\[
\frac{\ell_1}{\ell}\mathcal{N}(E-2\ell,E+\ell)\leq CN^{-3\epsilon/2},
\]
and hence, w.o.p.
\[
\tr\chi_E(W)\leq \tr\chi_{E-\ell}\star\theta_\eta(W)+CN^{-3\epsilon/2},
\]
which, for sufficiently large $N$, implies a cruder inequality
\[
\tr\chi_E(W)\leq \tr\chi_{E-\ell}\star\theta_\eta(W)+N^{-\epsilon}.
\]
A lower bound can be established similarly. The bounds and equations \eqref{eq:compact vs infty}, \eqref{eq:smoothed} yield equation \eqref{eq:ey17.3}. $\qed$
\color{black}





\end{document}